\providecommand{\tabularnewline}{\\}
\numberwithin{equation}{section}
\numberwithin{figure}{section}
\theoremstyle{plain}
  \theoremstyle{plain}
  \newtheorem*{thm*}{\protect\theoremname}
\theoremstyle{plain}
\newtheorem{thm}{\protect\theoremname}
  \theoremstyle{definition}
  \newtheorem{example}[thm]{\protect\examplename}
  \theoremstyle{definition}
  \newtheorem{defn}[thm]{\protect\definitionname}
  \theoremstyle{remark}
  \newtheorem{notation}[thm]{\protect\notationname}
  \theoremstyle{plain}
  \newtheorem{lem}[thm]{\protect\lemmaname}
  \theoremstyle{remark}
  \newtheorem{rem}[thm]{\protect\remarkname}
  \theoremstyle{plain}
  \newtheorem{prop}[thm]{\protect\propositionname}
  \theoremstyle{plain}
\theoremstyle{plain}
  \newtheorem{question}[thm]{\protect\questionname}
  \providecommand{\corollaryname}{Corollary}
  \providecommand{\definitionname}{Definition}
  \providecommand{\examplename}{Example}
  \providecommand{\lemmaname}{Lemma}
  \providecommand{\notationname}{Notation}
  \providecommand{\propositionname}{Proposition}
  \providecommand{\remarkname}{Remark}
\providecommand{\theoremname}{Theorem}
  \providecommand{\questionname}{Question}
\begin{document}
\subjclass[2010]{14R05 14R25 14E05 14P25 14J26}
\keywords{real algebraic model; affine line; rational fibration; birational diffeomorphism, Abhyankar-Moh } 

\author{Adrien Dubouloz}

\address{Adrien Dubouloz, IMB UMR5584, CNRS, Univ. Bourgogne Franche-Comté, F-21000 Dijon, France.}

\email{adrien.dubouloz@u-bourgogne.fr}

\author{Fr\'{e}d\'{e}ric Mangolte}

\address{Fr\'{e}d\'{e}ric Mangolte, Laboratoire angevin de recherche en math\'{e}matiques (LAREMA), CNRS, Universit\'{e} d\textquoteright{}Angers, F-49045 Angers, France}

\email{frederic.mangolte@univ-angers.fr}

\thanks{This research benefited at its early stage from the support of ANR Grant \textquotedbl{}BirPol\textquotedbl{}
ANR-11-JS01-004-01. The second author is partially supported by the ANR grant \textquotedbl{}ENUMGEOM\textquotedbl{} ANR-18-CE40-0009.}

\title{Algebraic models of the line in the real affine plane}
\begin{abstract}
We study smooth rational closed embeddings of the real affine line into the real affine plane, that is algebraic rational maps from the real affine line to the real affine plane which induce smooth closed embeddings of the real euclidean line into the real euclidean plane. We consider these up to equivalence under the group of birational automorphisms of the real affine plane which are diffeomorphisms of its real locus. We show that in contrast with the situation in the categories of smooth manifolds with smooth maps and of real algebraic varieties with regular maps where there is only one equivalence class up to isomorphism, there are non-equivalent smooth rational closed embeddings up to such birational diffeomorphisms. \end{abstract}

\maketitle

\section*{Introduction}
It is a consequence of the Jordan\textendash Schoenflies theorem (see Remark \ref{rem:JS-open} below) that every two smooth proper embeddings of $\mathbb{R}$ into $\mathbb{R}^{2}$ are equivalent up to composition by diffeomorphisms of $\mathbb{R}^2$.
Every algebraic closed embedding of the real affine line $\mathbb{A}_{\mathbb{R}}^{1}$ into
the real affine plane $\mathbb{A}_{\mathbb{R}}^{2}$ induces a smooth proper
embedding of the real locus $\mathbb{R}$ of $\mathbb{A}_{\mathbb{R}}^{1}$
into the real locus $\mathbb{R}^{2}$ of $\mathbb{A}_{\mathbb{R}}^{2}$.
Given two such algebraic embeddings $f,g:\mathbb{A}_{\mathbb{R}}^{1}\hookrightarrow\mathbb{A}_{\mathbb{R}}^{2}$, 
the famous Abhyankar-Moh Theorem \cite{AM75}, which is valid over any field of characteristic zero \cite[$\S$ 5.4]{vdE00}, asserts the existence
of a polynomial automorphism $\phi$ of $\mathbb{A}_{\mathbb{R}}^{2}$
such that $f=\phi\circ g$. This implies in particular that the smooth proper
embeddings of $\mathbb{R}$ into $\mathbb{R}^{2}$ induced
by $f$ and $g$ are equivalent under composition by a polynomial
diffeomorphism of $\mathbb{R}^{2}$. 

In this article, we consider a similar problem in a natural category intermediate between the real
algebraic and the smooth ones. Our main objects of study consist of
smooth embeddings of $\mathbb{R}$ into $\mathbb{R}^{2}$ induced
by rational algebraic maps $\mathbb{A}_{\mathbb{R}}^{1}\dashrightarrow\mathbb{A}_{\mathbb{R}}^{2}$
defined on the real locus of $\mathbb{A}_{\mathbb{R}}^{1}$ and whose
restrictions to this locus induce smooth proper embeddings of $\mathbb{R}$
into $\mathbb{R}^{2}$. We call these maps \emph{rational smooth embeddings},
and the question is the classification of these embeddings up to \emph{birational
diffeomorphisms} of $\mathbb{A}_{\mathbb{R}}^{2}$, that is, diffeomorphisms
of $\mathbb{R}^{2}$ which are induced by birational algebraic endomorphisms
of $\mathbb{A}_{\mathbb{R}}^{2}$ containing $\mathbb{R}^{2}$ in
their domains of definition and admitting rational inverses of the
same type. 

A first natural working question in this context is to decide whether
any rational smooth embedding is equivalent up to birational diffeomorphism
to the standard regular closed embedding of $\mathbb{A}_{\mathbb{R}}^{1}$
into $\mathbb{A}_{\mathbb{R}}^{2}$ as a linear subspace. Since every
rational smooth embedding $f:\mathbb{A}_{\mathbb{R}}^{1}\dashrightarrow\mathbb{A}_{\mathbb{R}}^{2}$
uniquely extends (see Lemma~\ref{lem:RatEmbed-from-projcurve}) 
to a morphism $\mathbb{P}_{\mathbb{R}}^{1}\rightarrow\mathbb{P}_{\mathbb{R}}^{2}$
birational onto its image, a rational smooth embedding which
can be rectified to a linear embedding by a birational diffeomorphism
defines in particular a rational plane curve $C$ that can be mapped onto a line
by a birational automorphism of $\mathbb{P}_{\mathbb{R}}^{2}$. By
classical results of Coolidge \cite{Co59}, Iitaka \cite{Ii83} and Kumar-Murthy
\cite{KuMu83}, complex curves with this property are characterized by
the negativity of the logarithmic Kodaira dimension of the complement
of their proper transform in a minimal resolution of their singularities.
Building on these ideas and techniques, we show the existence of \emph{non-rectifiable}
rational smooth embedding. In particular, we obtain the following
result, see Theorem \ref{thm:NonFib-Main} for a stronger statement: 
\begin{thm*}
For every integer $d\geq5$ there exists a non-rectifiable rational smooth embedding of $\mathbb{A}_{\mathbb{R}}^{1}$ into $\mathbb{A}_{\mathbb{R}}^{2}$ whose associated projective curve $C\subset\mathbb{P}_{\mathbb{R}}^{2}$ is a rational nodal curve of degree $d$.  
\end{thm*}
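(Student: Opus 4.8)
The plan is to produce, for each $d\geq 5$, an explicit real rational curve $C\subset\mathbb{P}_{\mathbb{R}}^{2}$ of degree $d$ whose only singularities are nodes, all of them \emph{solitary} real points (acnodes, with two complex conjugate branches), and then to read off from it a rational smooth embedding and to show that $C$ cannot be carried to a line by a birational automorphism of $\mathbb{P}_{\mathbb{R}}^{2}$. By the implication recorded in the introduction, the latter forces non-rectifiability of the associated embedding.

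For the construction I would realize $C$ as a real projection to $\mathbb{P}_{\mathbb{R}}^{2}$ of the degree-$d$ rational normal curve $\nu_{d}(\mathbb{P}^{1})\subset\mathbb{P}_{\mathbb{R}}^{d}$ from a carefully chosen real linear center $\Lambda\cong\mathbb{P}^{d-3}$ (or, equivalently, by an explicit parametrization). A sufficiently general projection of a smooth rational curve is nodal with exactly $\binom{d-1}{2}$ nodes and no worse singularity, so the geometric genus is $0$; the nodes correspond to the secant lines of $\nu_{d}(\mathbb{P}^{1})$ meeting $\Lambda$. The reality input is to choose $\Lambda$ real in such a way that every such secant joins a pair of complex conjugate points $\nu_{d}(\alpha),\nu_{d}(\bar{\alpha})$, so that each node is the image of a conjugate pair of parameters, hence a solitary real point, and no two distinct real parameters get identified. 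I would simultaneously arrange that a fixed real point $\infty\in\mathbb{P}^{1}$ maps into a chosen line $L_{\infty}$ and that $C$ meets $L_{\infty}$ in no other real point, so that the whole real affine arc lies in $\mathbb{A}_{\mathbb{R}}^{2}=\mathbb{P}_{\mathbb{R}}^{2}\setminus L_{\infty}$. Having enough moduli in $\Lambda$ to meet all these conjugacy constraints at once -- in particular to exclude every real crunode while keeping the curve genuinely nodal -- is exactly where the hypothesis $d\geq 5$ enters, and this is the main obstacle.

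Granting such a $C$, the map $f\colon\mathbb{A}_{\mathbb{R}}^{1}\dashrightarrow\mathbb{A}_{\mathbb{R}}^{2}$ obtained from the normalization $\mathbb{P}^{1}\to C\hookrightarrow\mathbb{P}^{2}$ via Lemma~\ref{lem:RatEmbed-from-projcurve} is a rational smooth embedding: since $C$ is nodal its normalization is an immersion everywhere, hence $f$ is an immersion along $\mathbb{R}$; since all nodes are solitary, $f$ identifies no two real parameters and no real with a complex one, so $f|_{\mathbb{R}}$ is injective; and the condition at $\infty$ makes $f|_{\mathbb{R}}\colon\mathbb{R}\to\mathbb{R}^{2}$ proper. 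Thus $f|_{\mathbb{R}}$ is a smooth proper embedding, with associated projective curve $C$ a rational nodal curve of degree $d$.

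Finally, to establish non-rectifiability I would compute the logarithmic Kodaira dimension of the complement. Let $\pi\colon X\to\mathbb{P}^{2}$ blow up the $\delta=\binom{d-1}{2}$ nodes, with exceptional curves $E_{1},\dots,E_{\delta}$ and $H=\pi^{*}\mathcal{O}_{\mathbb{P}^{2}}(1)$; the proper transform $\tilde{C}=dH-2\sum_{i}E_{i}$ is smooth and $D=\tilde{C}+\sum_{i}E_{i}$ is an SNC divisor with $X\setminus D\cong\mathbb{P}^{2}\setminus C$. A direct computation then gives
\[ K_{X}+D=\bigl(-3H+\textstyle\sum_{i}E_{i}\bigr)+\bigl(dH-\textstyle\sum_{i}E_{i}\bigr)=(d-3)H, \]
which is nef and big as soon as $d\geq 4$, so $\bar{\kappa}(\mathbb{P}^{2}\setminus C)=\kappa(X,K_{X}+D)=2$; in particular the complement is of log general type and $\bar{\kappa}\neq-\infty$. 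By the results of Coolidge~\cite{Co59}, Iitaka~\cite{Ii83} and Kumar--Murthy~\cite{KuMu83}, a rational plane curve with $\bar{\kappa}(\mathbb{P}^{2}\setminus C)\neq-\infty$ is not Cremona-equivalent to a line over $\mathbb{C}$, a fortiori not over $\mathbb{R}$. By the implication of the introduction, the rational smooth embedding $f$ is therefore non-rectifiable, which proves the statement. (Note that the log Kodaira computation already obstructs rectifiability for all $d\geq 4$, so the restriction $d\geq 5$ is imposed by the real geometry of the construction rather than by this final step.)
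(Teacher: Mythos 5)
Your overall strategy (a nodal rational curve of degree $d$ avoiding real singularities in the affine chart, plus a Kodaira-type obstruction via Coolidge) is the right skeleton and parallels the paper for $d\geq6$, but both halves of your execution have fatal flaws. First, the construction: you insist that all nodes of $C$ be \emph{solitary real points} (acnodes). An acnode is a real point of $C$, so the real locus of the scheme-theoretic image $f_{*}(\mathbb{A}_{\mathbb{R}}^{1})=C\cap\mathbb{A}_{\mathbb{R}}^{2}$ consists of the embedded arc \emph{together with} the isolated acnodes, whereas $f(\mathbb{R})$ is only the arc; hence $f(\mathbb{A}_{\mathbb{R}}^{1}(\mathbb{R}))\subsetneq(f_{*}(\mathbb{A}_{\mathbb{R}}^{1}))(\mathbb{R})$ and $f$ is \emph{not} a rational smooth embedding. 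This is exactly the non-example of the nodal cubic $y^{2}=x(x+1)^{2}$ with its isolated real point $(-1,0)$ discussed in the paper, and it is excluded by condition b) of Lemma \ref{lem:RatEmbed-from-projcurve}, which requires $C$ to be smooth at every \emph{real} point of $C\cap\mathbb{A}_{\mathbb{R}}^{2}$. You cannot repair this by pushing the acnodes onto $L_{\infty}$: since each contributes at least $2$ to $C\cdot L_{\infty}$, placing all $\tfrac{1}{2}(d-1)(d-2)$ of them there would force $d\geq(d-1)(d-2)$, impossible for $d\geq5$. What is actually needed — and what Proposition \ref{prop:Curves-with-nonreal-nodes} arranges via the conjugate pair $D_{k}\cup\overline{D}_{k}$, a real line or conic, and Brusotti smoothings — is that the nodes be pairs of \emph{non-real} complex conjugate points (each node the image of two non-real, mutually non-conjugate parameters), so that $C(\mathbb{R})$ is a smooth closed curve.

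Second, the obstruction: you invoke the Coolidge--Kumar--Murthy criterion with the wrong divisor. That criterion concerns the complement of the \emph{proper transform} in a resolution, i.e.\ $\kappa(X,K_{X}+\tilde{C})$, not $\kappa(X,K_{X}+\tilde{C}+\sum_{i}E_{i})=\overline{\kappa}(\mathbb{P}^{2}\setminus C)$, and the implication you state ("$\overline{\kappa}(\mathbb{P}^{2}\setminus C)\neq-\infty$ implies $C$ is not Cremona-equivalent to a line") is false. The nodal cubic has $\overline{\kappa}(\mathbb{P}^{2}\setminus C)=0$ yet is Cremona-equivalent to a line; by your own formula a rational nodal quartic has $\overline{\kappa}=\kappa(H)=2$, yet every such quartic is sent to a line by a Cremona map (quadratic transformation at its three nodes, then conic to line) — indeed the paper proves the corresponding embeddings are even rectifiable (Lemma \ref{lem:Rectif-3Sing-bis}), so your parenthetical claim that your computation obstructs rectifiability for all $d\geq4$ directly contradicts the paper. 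Moreover, even with the correct invariant $\kappa(X,K_{X}+\tilde{C})$ your approach must collapse at $d=5$: Remark \ref{rem:DiffBir-Quintic} exhibits an explicit birational map of $\mathbb{P}_{\mathbb{R}}^{2}$ carrying the nodal quintic to a line, so by Coolidge's characterization \cite{Co59,KuMu83} that invariant is $-\infty$, and no obstruction depending only on the complex curve $C$ can work in degree $5$. This is precisely why the paper replaces it by the \emph{real} Kodaira dimension $\kappa_{\mathbb{R}}(\mathbb{P}_{\mathbb{R}}^{2}\setminus(C\cup L_{\infty}))$ — an invariant of the pair including $L_{\infty}$ and the real structure, which obstructs biddability and hence rectifiability by Proposition \ref{prop:RealKodObst-fiber} — deducing its non-negativity from Coolidge when $d\geq6$ and by a direct divisor computation when $d=5$ (Lemma \ref{lem:RealKod-comp}).
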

The existence of non-rectifiable rational smooth embeddings motivates
the search for weaker properties which can be satisfied by rational
smooth embeddings. To this end we observe that the Abhyankar-Moh Theorem
implies that the image of a regular closed embedding $\mathbb{A}_{\mathbb{R}}^{1}\hookrightarrow\mathbb{A}_{\mathbb{R}}^{2}$
is a real fiber of a structure of trivial $\mathbb{A}^{1}$-bundle
$\rho:\mathbb{A}_{\mathbb{R}}^{2}\rightarrow\mathbb{A}_{\mathbb{R}}^{1}$
on $\mathbb{A}_{\mathbb{R}}^{2}$. In the complex case, this naturally
leads to a ``generalized Abhyankar-Moh property'' for closed embeddings
of the affine line in affine surfaces $S$ equipped with $\mathbb{A}^{1}$-fibrations
over $\mathbb{A}_{\mathbb{C}}^{1}$, i.e. morphisms $\pi:S\rightarrow\mathbb{A}_{\mathbb{C}}^{1}$
whose general fibers are affine lines, which was studied for certain classes of surfaces in \cite{GMMR08}: the question there is whether the image of every regular closed embedding of $\mathbb{A}_{\mathbb{C}}^{1}$ in such
a surface is an irreducible component of a fiber of an $\mathbb{A}^{1}$-fibration.
The natural counterpart in our real birational setting consists in
shifting the focus to the question whether the image of a rational
smooth embedding is actually a fiber of an $\mathbb{A}^{1}$-fibration
$\pi:S\rightarrow\mathbb{A}_{\mathbb{R}}^{1}$ on a suitable real
affine surface $S$ birationally diffeomorphic to $\mathbb{A}_{\mathbb{R}}^{2}$,
but possibly non biregularly isomorphic to it. A rational smooth embedding
with this property is said to be \emph{biddable}. 

Being a fiber of an $\mathbb{A}^{1}$-fibration on a surface birationally
diffeomorphic to $\mathbb{A}_{\mathbb{R}}^{2}$ imposes strong restrictions
on the scheme-theoretic image $f_{*}(\mathbb{A}_{\mathbb{R}}^{1})$
of a rational smooth embedding $f:\mathbb{A}_{\mathbb{R}}^{1}\dashrightarrow\mathbb{A}_{\mathbb{R}}^{2}$.
We show in particular (Proposition \ref{prop:RealKodObst-fiber}) that
the \emph{real Kodaira dimension} \cite{BD17} $\kappa_{\mathbb{R}}(\mathbb{A}_{\mathbb{R}}^{2}\setminus f_{*}(\mathbb{A}_{\mathbb{R}}^{1}))$
of the complement of the image has to be negative, with the consequence
for instance that none of the rational smooth embedding mentioned
in the theorem above is actually biddable. 

In contrast, a systematic study of small degree embeddings (see in particular $\S$ \ref{subsec:quartic})
reveals that the images of these rational smooth embeddings are in a natural way smooth fibers of $\mathbb{A}^{1}$-fibrations
on some \emph{fake real planes}, a class of real birational models
of $\mathbb{A}_{\mathbb{R}}^{2}$ introduced and studied in the series of papers \cite{DuMa16,DuMa17}. These are smooth real
surfaces $S$ non isomorphic to $\mathbb{A}_{\mathbb{R}}^{2}$ whose
real loci are diffeomorphic to $\mathbb{R}^{2}$ and whose complexifications
have trivial reduced rational singular homology groups. We show moreover that smooth rational embeddings $\mathbb{A}_{\mathbb{R}}^{2}$ of degree less than or equal to four 
are in fact rectifiable, except possibly for two special quartic cases for which we are unable to conclude.

The scheme of the article is the following: section 1 contains preliminaries
on real algebraic varieties and a review of the structure of $\mathbb{A}^{1}$-\emph{fibered
algebraic models} of $\mathbb{R}^{2}$. In section 2, we introduce
the precise notions of rectifiable and biddable rational smooth embeddings
of the line in the plane and establish some basic properties. Section
3 is devoted to the construction of families of non-biddable rational
smooth embeddings.  Finally, section 4 contains a classification of rational
smooth embeddings of degree less than or equal to four.  \\

We want to thank Ilia Itenberg for his help in the construction of the nodal curves being used in Proposition \ref{prop:Curves-with-nonreal-nodes} and François Laudenbach for his help concerning equivalence classes of proper embeddings up to diffeomorphisms, see Remark \ref{rem:JS-open}. We are grateful to a referee for pointing out a mistake in an earlier version of the paper.

\section{Preliminaries}

A real (resp. complex) algebraic variety is a geometrically integral
scheme of finite type over the corresponding base field. A morphism
between such varieties is a morphism of schemes over the corresponding
base field. A complex algebraic variety $X$ is said to be defined
over $\mathbb{R}$ if there exists a real algebraic variety $X'$
and an isomorphism of complex algebraic varieties between $X$ and
the complexification $X'_{\mathbb{C}}=X'\times_{\mathbb{R}}\mathbb{C}$
of $X'$. 

\subsection{Euclidean topologies and birational diffeomorphisms}

The sets $X(\mathbb{R})$ and $X(\mathbb{C})$ of real and complex
points of a real algebraic variety $X$ are endowed in a natural way
with the Euclidean topology, locally induced by the usual Euclidean
topologies on $\mathbb{A}_{\mathbb{R}}^{n}\simeq\mathbb{R}^{2n}$
and $\mathbb{A}_{\mathbb{C}}^{n}\simeq\mathbb{C}^{n}$ respectively.
Every morphism $f\colon X\rightarrow Y$ of real algebraic varieties
induces a continuous map $f(\mathbb{R})\colon X(\mathbb{R})\rightarrow Y(\mathbb{R})$
for the Euclidean topologies, which is an homeomorphism when $f$
is an isomorphism. If $X$ and $Y$ are smooth, then $X(\mathbb{R})$
and $Y(\mathbb{R})$ can be further equipped with natural structures
of smooth $\mathcal{C}^{\infty}$-manifolds, and the continuous map $f(\mathbb{R})$
induced by an isomorphism of real algebraic varieties is a diffeomorphism
for these structures. See \cite[Sections 1.4 and 2.4]{Ma17} for details.
\begin{defn}
\label{def:RegularRregular} Let $\alpha:X\dashrightarrow Y$ be a
rational map between real algebraic varieties with non-empty real
loci $X(\mathbb{R})$ and $Y(\mathbb{R})$. 

a) We say that $\alpha$ is $\mathbb{R}$-\emph{regular} if $X(\mathbb{R})$
is contained in its domain of definition. 

b) We say that $\alpha$ is $\mathbb{R}$-\emph{biregular} if it is $\mathbb{R}$-regular and 
birational and its inverse is also $\mathbb{R}$-regular. 

\noindent For simplicity, an $\mathbb{R}$-biregular birational map between smooth
real algebraic varieties with non-empty real loci is called a \emph{birational
diffeomorphism}. 
\end{defn}

Note that when $X$ and $Y$ are smooth real varieties, a birational diffeomorphism between $X$ and $Y$ does restrict to a diffeomorphism between the real loci of $X$ and $Y$ equipped with their respective structure of smooth manifolds. 

\subsection{Pairs, log-resolutions and SNC completions }

A \emph{Simple Normal Crossing} $($SNC$)$ divisor on a smooth real
or complex surface $S$ is a curve $B$ whose complexification $B_{\mathbb{C}}$
has smooth irreducible components and ordinary double points only
as singularities. An SNC divisor $B$ on a smooth complete surface
$V$ is said to be \emph{SNC-minimal} if there does not exist
any projective strictly birational morphism $\tau:V\rightarrow V'$
onto a smooth complete surface defined over the same base field, with
exceptional locus contained in $B$ and such that the image $\tau(B)$
of $B$ is SNC. 

A \emph{smooth SNC pair} is a pair $(V,B)$ consisting of a smooth
projective surface $V$ and an SNC divisor $B$, both defined over
the considered base field.

A \emph{smooth completion} of a smooth algebraic surface $S$ is a
smooth SNC pair $(V,B)$ together with an isomorphism $V\setminus B\simeq S$,
all defined over the considered base field. Such completions always
exist as a consequence of classical completion and desingularization
theorems \cite{Na62,Wa35}.

A \emph{log-resolution} of a pair $(V,D)$ consisting of a surface
$V$ and a reduced curve $D$ on it is a projective birational morphism
$\tau:V'\rightarrow V$ defined over the considered base field, such
that $V'$ is smooth and the union of the reduced total transform
$\tau^{-1}(D)$ of $D$ with the exceptional locus $\mathrm{Ex}(\tau)$
of $\tau$ is an SNC divisor $B$ on $V'$. A log-resolution which is minimal
with respect to the ordering by birational domination is called
a \emph{minimal log-resolution} of $(V,D)$. 

By a minimal log-resolution of a birational map between smooth complete
surfaces, we mean a minimal log-resolution of its graph.

\subsection{$\mathbb{A}^{1}$-fibered algebraic models of $\mathbb{R}^{2}$}
In what follows, an \emph{algebraic model} of  $\mathbb{R}^{2}$ refers to a smooth geometrically
integral real surface $S$ with real locus $S(\mathbb{R})$ diffeomorphic to $\mathbb{R}^{2}$ and whose complexification
has the rational homology type of a point, i.e. has trivial reduced homology groups $\tilde{H}_{i}(S_{\mathbb{C}};\mathbb{Q})$ for every $i\geq0$. 
It turns out that $\mathbb{R}^{2}$ admits many algebraic models non-isomorphic to $\mathbb{A}_{\mathbb{R}}^{2}$, called \emph{fake real planes}, cf. \cite[Definition~1.1]{DuMa16}.
A partial classification of these was established in \cite{DuMa17} (see also \cite{DuMa16}) according to their usual Kodaira dimension. 
In the present article, algebraic models
$S$ of negative Kodaira dimension are of particular interest to us due to the fact that they admit an $\mathbb{A}^{1}$-\emph{fibration} $\pi:S\rightarrow\mathbb{A}_{\mathbb{R}}^{1}$
defined over $\mathbb{R}$, that is, a surjective morphism whose generic
fiber is isomorphic to the affine line over the function field of $\mathbb{A}^1_{\mathbb{R}}$. More
precisely, we have the following characterization:
\begin{thm}
\label{thm:-A1Fib-Model}$($\cite[Theorem 4.1]{DuMa17}$)$ A smooth
geometrically integral surface $S$ defined over $\mathbb{R}$ is
an algebraic model of $\mathbb{R}^{2}$ of Kodaira dimension $\kappa(S)=-\infty$
if and only if it admits an $\mathbb{A}^{1}$-fibration $\pi:S\rightarrow\mathbb{A}_{\mathbb{R}}^{1}$
defined over $\mathbb{R}$ whose only singular fibers, if any, are
geometrically irreducible real fibers of odd multiplicity, isomorphic
to $\mathbb{A}_{\mathbb{R}}^{1}$ when equipped with their reduced
structures. 
\end{thm}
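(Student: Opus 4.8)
The plan is to prove the two implications separately, in each case reducing the problem to the structure theory of $\mathbb{A}^1$-fibrations on smooth affine surfaces over $\mathbb{C}$ and then controlling the behaviour of the real structure. Throughout I would work with an SNC completion $(V,B)$ of $S$ defined over $\mathbb{R}$, as provided by the completion and desingularization results recalled above, so that complex-analytic and Euler-characteristic computations can be carried out on the projective pair while the arithmetic input is kept track of by the $\mathbb{R}$-structure.

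For the implication ($\Leftarrow$), suppose $S$ carries an $\mathbb{A}^1$-fibration $\pi\colon S\to\mathbb{A}^1_{\mathbb{R}}$ of the stated form. Since the generic fiber of $\pi$ is an affine line over $\mathbb{R}(t)$, the surface $S$ contains a dense cylinder and is therefore an affine rational surface with $\kappa(S)=-\infty$. To recover that $S$ is an algebraic model, I would first compute the rational homology of $S_{\mathbb{C}}$: over $\mathbb{C}$ every fiber of $\pi_{\mathbb{C}}$ is set-theoretically a single affine line, hence contractible for the Euclidean topology, so that $\chi(S_{\mathbb{C}})=\chi(\mathbb{A}^1_{\mathbb{C}})=1$, and a Leray/deformation-retraction argument along the fibration with contractible fibers yields $\tilde{H}_i(S_{\mathbb{C}};\mathbb{Q})=0$ for all $i$. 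For the real locus I would analyze $\pi(\mathbb{R})\colon S(\mathbb{R})\to\mathbb{R}$: over points carrying a reduced fiber this is a locally trivial bundle with fiber $\mathbb{A}^1_{\mathbb{R}}(\mathbb{R})\cong\mathbb{R}$, and at a singular fiber of odd multiplicity $m$ the local model $\pi-t_0=u\cdot f^m$ restricts on real points to an odd power of the defining function, hence to a homeomorphism transverse to the reduced real fiber $\cong\mathbb{R}$. Thus $\pi(\mathbb{R})$ is a topological fiber bundle over $\mathbb{R}$ with fiber $\mathbb{R}$, necessarily trivial, giving $S(\mathbb{R})\cong\mathbb{R}^2$; here oddness of $m$ is exactly what prevents a fold of the real locus.

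For the implication ($\Rightarrow$), suppose $S$ is an algebraic model of $\mathbb{R}^2$ with $\kappa(S)=-\infty$. Since Kodaira dimension is insensitive to the extension of scalars, $S_{\mathbb{C}}$ is a smooth affine surface with $\kappa(S_{\mathbb{C}})=-\infty$, and by Miyanishi's structure theorem it admits an $\mathbb{A}^1$-fibration. The first task is to produce such a fibration already defined over $\mathbb{R}$ with base $\mathbb{A}^1_{\mathbb{R}}$: starting from $(V,B)$, I would use the negativity of $\kappa$ to run a relative minimal model argument over $\mathbb{R}$, extracting a $\mathbb{P}^1$-fibration $V\to\mathbb{P}^1_{\mathbb{R}}$ adapted to $B$, whose restriction defines an $\mathbb{R}$-fibration of $S$ over a smooth affine curve $B_0$. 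The triviality of $\tilde{H}_*(S_{\mathbb{C}};\mathbb{Q})$ then forces $(B_0)_{\mathbb{C}}\cong\mathbb{A}^1_{\mathbb{C}}$, and since $S(\mathbb{R})\cong\mathbb{R}^2$ is non-empty and connected, the base is the real form with real points, i.e. $B_0\cong\mathbb{A}^1_{\mathbb{R}}$. The degenerate fibers of an $\mathbb{A}^1$-fibration being disjoint unions of affine lines with multiplicities, additivity of the Euler characteristic against $\chi(S_{\mathbb{C}})=1$ forces each of them to be supported on a single line, giving geometric irreducibility and the reduced structure $\mathbb{A}^1_{\mathbb{R}}$.

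It then remains to pin down the field of definition and the parity of the multiplicity of each singular fiber, and here is where I expect the real obstacle to lie. The assertion is existential — one must exhibit one good fibration, not classify all of them — so I would examine the real locus near each multiple fiber and, using Galois invariance of $\pi$ together with the diffeomorphism $S(\mathbb{R})\cong\mathbb{R}^2$, show that after possibly modifying $\pi$ by birational diffeomorphisms over $\mathbb{R}$ one may assume every singular fiber lies over a real point and has odd multiplicity: an even real multiplicity produces a fold of $\pi(\mathbb{R})$ incompatible with a trivial $\mathbb{R}$-bundle structure, while a conjugate pair of non-real multiple fibers must be reduced away to preserve the model axioms. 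The genuinely delicate points are (i) the descent of the complex $\mathbb{A}^1$-fibration to one defined over $\mathbb{R}$ with the correct affine base, and (ii) the translation of the purely topological constraint $S(\mathbb{R})\cong\mathbb{R}^2$ into the arithmetic statement on multiplicities and fields of definition of the singular fibers; both require a careful bookkeeping on the SNC completion $(V,B)$ that I regard as the heart of the argument.
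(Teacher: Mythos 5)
First, a structural remark: the paper does not prove Theorem \ref{thm:-A1Fib-Model} at all; it is imported verbatim from \cite[Theorem 4.1]{DuMa17}. So your proposal can only be measured against what such a proof must actually accomplish. Several of your steps are sound and do match the standard strategy: $\kappa(S)=-\infty$ from the existence of a cylinder, the Euler-characteristic argument forcing all fibers of $\pi_{\mathbb{C}}$ to be irreducible, the identification of the base with $\mathbb{A}^1_{\mathbb{R}}$ via forms of $\mathbb{A}^1$, and the exclusion of even real multiplicities (an even multiplicity does produce a fold, so the real locus near that fiber lies on one side of it and $S(\mathbb{R})$ disconnects). But two steps fail as written.

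In the direction ($\Leftarrow$), your topological argument is invalid. The map $\pi_{\mathbb{C}}$ is not a locally trivial fibration near a multiple fiber, so no Leray/deformation-retraction argument is available; worse, any such coefficient-free argument would prove \emph{integral} acyclicity, which is false: the paper's own Remark \ref{prop:Top-fibers} and Example \ref{exa:FakePlanes} record that $H_1(S_{\mathbb{C}};\mathbb{Z})\simeq\bigoplus_i\mathbb{Z}_{m_i}\neq 0$ as soon as there is a multiple fiber. The $\mathbb{Q}$-acyclicity must instead be extracted from Miyanishi's structure theorem \cite{MiyBook} quoted in that remark. Similarly, $\pi(\mathbb{R})$ is not a submersion along a multiple fiber (the differential of $u\cdot f^m$ vanishes there to order $m-1$), and since the fibers are non-compact no Ehresmann-type statement applies, so ``hence a topological fiber bundle, necessarily trivial'' is an assertion, not a proof. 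The odd-power local normal form is the right idea, but it must be globalized, or one should instead show $S(\mathbb{R})$ is a connected, simply connected, non-compact surface and invoke the classification of open surfaces.

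The fatal gap is in ($\Rightarrow$), precisely at the point you yourself flag as delicate: forcing the singular fibers to be real. Your claim that a conjugate pair of non-real multiple fibers ``must be reduced away to preserve the model axioms'' is false. Such fibers are invisible to the real locus and contribute only torsion $\mathbb{Z}_m\oplus\mathbb{Z}_m$ to $H_1(S_{\mathbb{C}};\mathbb{Z})$, hence are fully compatible with $\mathbb{Q}$-acyclicity. Concretely, the surface $S'=\{(u^2+1)^2z=v^3-(u^2+1)\}\subset\mathbb{A}^3_{\mathbb{R}}$ is smooth, its real locus is the graph of a function of $(u,v)\in\mathbb{R}^2$, hence diffeomorphic to $\mathbb{R}^2$, and its complexification is $\mathbb{Q}$-acyclic ($\mathrm{pr}_u$ is an $\mathbb{A}^1$-fibration with all fibers irreducible, so $H_1=\mathbb{Z}_3\oplus\mathbb{Z}_3$ is torsion and $H_2=0$); yet $\mathrm{pr}_u$ has a pair of non-real multiple fibers over $u=\pm i$. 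So the model axioms do not exclude non-real multiple fibers, and proving that such a surface nevertheless carries \emph{some} fibration of the required form is exactly the nontrivial arithmetic content of the cited theorem. Your fallback, ``modifying $\pi$ by birational diffeomorphisms over $\mathbb{R}$,'' cannot repair this: a birational diffeomorphism replaces $S$ by a different surface, whereas the theorem asserts the existence of an $\mathbb{A}^1$-fibration on $S$ itself --- this is precisely the distinction the paper draws between biregular statements and biddability/rectifiability --- and composing $\pi$ with a birational self-map of $S$ destroys the property of being a morphism. Hence the heart of ($\Rightarrow$), the reality and oddness of the multiple fibers of an honest fibration on the given surface, is not established by your outline.
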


\begin{rem} \label{prop:Top-fibers} The multiplicities of the singular fibers of an $\mathbb{A}^{1}$-fibered algebraic model of $\mathbb{R}^{2}$ are intimately related to the topology of its complexification (see e.g. \cite[Theorem 4.3.1 p. 231]{MiyBook}). Namely, letting $m_1 F_1, \ldots, m_rF_r$ be the singular fibers of $\pi$, where $F_i\simeq \mathbb{A}^1_{\mathbb{R}}$ and $m_i\geq 2$ for every $i=1,\ldots, r$, and $F$ be any fiber of $\pi$, we have  $$ H_1(S_{\mathbb{C}},\mathbb{Z})=\bigoplus_{F_i}\mathbb{Z}_{m_i} \quad \textrm{ and } \quad H_1((S\setminus F)_{\mathbb{C}},\mathbb{Z})=\mathbb{Z}\oplus \bigoplus_{F_i \neq F}\mathbb{Z}_{m_i}.$$
\end{rem}
Given an $\mathbb{A}^{1}$-fibration $\pi:S\rightarrow\mathbb{A}_{\mathbb{R}}^{1}$
on a smooth quasi-projective surface $S$, there exists a smooth projective
completion $(V,B)$ of $S$ on which $\pi$ extends to a morphism
$\overline{\pi}:V\rightarrow\mathbb{P}_{\mathbb{R}}^{1}$ with generic
fiber isomorphic to $\mathbb{P}^{1}$ over the function field of $\mathbb{P}_{\mathbb{R}}^{1}$,
called a $\mathbb{P}^{1}$-fibration. For $\mathbb{A}^{1}$-fibered
algebraic models of $\mathbb{R}^{2}$, we have the following more
precise description, see \cite[\S 4.1.1]{DuMa17}:

\begin{lem}
\label{lem:A1Fibmodel-completion}A smooth $\mathbb{A}^{1}$-fibered
algebraic model $\pi:S\rightarrow\mathbb{A}_{\mathbb{R}}^{1}$ of $\mathbb{R}^{2}$ 
admits a smooth projective completion $(V,B)$ on which $\pi$ extends
to a $\mathbb{P}^{1}$-fibration $\overline{\pi}:V\rightarrow\mathbb{P}_{\mathbb{R}}^{1}$
for which the divisor $B$ is a tree of rational curves that can be
written in the form 
\[
B=F_{\infty}\cup H\cup\bigcup_{p\in\mathbb{A}_{\mathbb{R}}^{1}(\mathbb{R})}G_{p}
\]
where:

a) $F_{\infty}\simeq\mathbb{P}_{\mathbb{R}}^{1}$ is the fiber of
$\overline{\pi}$ over $\infty=\mathbb{P}_{\mathbb{R}}^{1}\setminus\mathbb{A}_{\mathbb{R}}^{1}$, 

b) $H\simeq\mathbb{P}_{\mathbb{R}}^{1}$ is a section of $\overline{\pi}$, 

c) $G_{p}$ is either empty if $\pi^{-1}(p)$ is a smooth fiber of
$\pi$ or a proper SNC-minimal subtree of $\overline{\pi}^{-1}(p)$ containing
an irreducible component intersecting $H$ otherwise. 

Furthermore, for every $p$ such that $G_{p}\neq\emptyset$, the support
of $\overline{\pi}^{-1}(p)$ is a tree consisting of the union of $G_{p}$
with the support of the closure in $V$ of $\pi^{-1}(p)$. 
\end{lem}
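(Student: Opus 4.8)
The plan is to build the completion in three stages: first produce \emph{some} smooth projective completion carrying an extension of $\pi$ to a $\mathbb{P}^1$-fibration, then separate the horizontal and vertical parts of the boundary, and finally contract superfluous curves to bring the boundary into the stated normal form. I would begin by invoking the existence of a smooth projective completion $(V_0,B_0)$ of $S$ over $\mathbb{R}$ (via the completion and desingularization theorems already cited) and extending $\pi\colon S\to\mathbb{A}^1_{\mathbb{R}}\subset\mathbb{P}^1_{\mathbb{R}}$ to a rational map $V_0\dashrightarrow\mathbb{P}^1_{\mathbb{R}}$. Resolving its points of indeterminacy by a sequence of blow-ups defined over $\mathbb{R}$ and performing a further log-resolution yields a smooth SNC pair $(V,B)$ with $B=V\setminus S$ on which $\pi$ extends to a morphism $\overline{\pi}\colon V\to\mathbb{P}^1_{\mathbb{R}}$ with generic fibre $\mathbb{P}^1$, that is, a $\mathbb{P}^1$-fibration. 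As the existence of such a fibration forces $V$ to be a rational surface and every degenerate fibre of a $\mathbb{P}^1$-fibration is a tree of rational curves, each vertical component of $B$ is rational.

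Next I would split $B$ into its $\overline{\pi}$-horizontal and $\overline{\pi}$-vertical parts. Because the general fibre of $\pi$ is isomorphic to $\mathbb{A}^1_{\mathbb{R}}\simeq\mathbb{P}^1_{\mathbb{R}}\setminus\{\mathrm{pt}\}$, the boundary meets a general fibre of $\overline{\pi}$ in exactly one point; hence the horizontal part of $B$ consists of a single irreducible curve $H$ mapping isomorphically onto $\mathbb{P}^1_{\mathbb{R}}$, i.e.\ a section of $\overline{\pi}$ with $H\simeq\mathbb{P}^1_{\mathbb{R}}$. The fibre $F_{\infty}=\overline{\pi}^{-1}(\infty)$ lies entirely in $B$ since $S=\pi^{-1}(\mathbb{A}^1_{\mathbb{R}})$; because no component of $S$ lies over $\infty$, I can contract the fibre tree over $\infty$ by successively blowing down the $(-1)$-curves it contains, until it becomes a single smooth fibre, without modifying $S$ and keeping $H$ a section. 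This produces $F_{\infty}\simeq\mathbb{P}^1_{\mathbb{R}}$ meeting $H$ in one point. Applying the same reduction over every closed point whose affine fibre $\pi^{-1}(p)$ is smooth turns the corresponding full fibre into a single $\mathbb{P}^1$ meeting $H$ once, so that the vertical boundary over such points is empty.

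It then remains to analyse the fibres over the points $p$ where $\pi^{-1}(p)$ is singular. By Theorem \ref{thm:-A1Fib-Model} these occur only over real points $p\in\mathbb{A}^1_{\mathbb{R}}(\mathbb{R})$, where $\pi^{-1}(p)=m_pF_p$ with $F_p\simeq\mathbb{A}^1_{\mathbb{R}}$ geometrically irreducible and $m_p\geq2$. The closure $\overline{F_p}$ is a rational curve meeting the rest of $\overline{\pi}^{-1}(p)$ only at the point at infinity of $F_p$, so it is a leaf of the fibre tree and $\overline{\pi}^{-1}(p)$ is the union of $\overline{F_p}$ with the connected subtree $G_p$ formed by the remaining components, which is exactly the "Furthermore" assertion. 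Since $H$ is a section one has $H\cdot\overline{\pi}^{-1}(p)=1$, so $H$ meets this fibre along a component of multiplicity one; as $\overline{F_p}$ occurs with multiplicity $m_p\geq2$, that component lies in $G_p$, whence $G_p$ is a proper subtree of $\overline{\pi}^{-1}(p)$ containing an irreducible component intersecting $H$. Contracting the superfluous $(-1)$-curves internal to each $G_p$ makes it SNC-minimal.

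Assembling the pieces, $B=F_{\infty}\cup H\cup\bigcup_{p}G_p$ is connected, and its dual graph is a tree of rational curves: the section $H$ serves as a spine meeting $F_{\infty}$ and each nonempty $G_p$ in a single point, in distinct fibres, so no cycle can arise, and all components are rational by the previous steps. The main technical obstacle is the bookkeeping of these successive contractions: one must verify that reducing $F_{\infty}$ and the smooth full fibres to single $\mathbb{P}^1$'s and SNC-minimizing each $G_p$ can all be performed by blow-downs defined over $\mathbb{R}$ while keeping $V$ smooth, $\overline{\pi}$ a $\mathbb{P}^1$-fibration, $H$ a section and $B$ an SNC divisor at every stage.
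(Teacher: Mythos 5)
The paper itself gives no proof of this lemma: it is quoted wholesale from \cite[\S 4.1.1]{DuMa17}, so your proposal can only be measured against the construction carried out there, which your outline follows in its broad lines. Extending $\pi$ to a $\mathbb{P}^{1}$-fibration on an SNC completion, observing that the generic fibre of $\pi$ being $\mathbb{A}^{1}$ over $\mathbb{R}(t)$ forces the horizontal part of the boundary to be a single real section $H\simeq\mathbb{P}_{\mathbb{R}}^{1}$, and your analysis of the fibres over points with singular affine fibre (the component meeting $H$ has multiplicity one, hence lies in $G_{p}$ since $\overline{\pi^{-1}(p)}$ has multiplicity $m_{p}\geq2$, which also gives the ``Furthermore'' assertion) are all correct.

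The genuine gap is the one you yourself park in your closing sentence: you never verify that the successive blow-downs are defined over $\mathbb{R}$, and this is not bookkeeping --- it is the only place in the lemma where the real structure intervenes, and it is exactly what can fail for a general real $\mathbb{P}^{1}$-fibration. Over $\mathbb{R}$ one may only contract a real $(-1)$-curve or a \emph{disjoint} pair of complex conjugate $(-1)$-curves; an equivariant contraction process can therefore a priori terminate at a fibre of conic-bundle type, namely two conjugate $(-1)$-curves $E$, $\overline{E}$ crossing at a real point, which cannot be contracted further over $\mathbb{R}$ and is not the irreducible fibre your normal form requires over $\infty$ and over the points with smooth affine fibre. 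To close the gap one argues as follows. First, by the structure theory of degenerate fibres of $\mathbb{P}^{1}$-fibrations (see e.g. \cite{MiyBook}), a singular fibre $F$ carrying a marked component $C_{0}$ of multiplicity one always contains a $(-1)$-curve $E\neq C_{0}$. Second, if such an $E$ is non-real and $E\cap\overline{E}\neq\emptyset$, then writing $m$ for the common multiplicity of $E$ and $\overline{E}$ in $F$, the equality $0=F\cdot E=-m+m\,(E\cdot\overline{E})+\sum_{C\neq E,\overline{E}}m_{C}\,(C\cdot E)$ forces $E\cdot\overline{E}=1$ and $C\cdot E=0$ for every other component $C$, so that $F=E\cup\overline{E}$ by connectedness of fibres. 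In your situation this outcome is impossible, because each fibre you contract contains a real multiplicity-one component that is kept throughout: over $\infty$ the component meeting the real section $H$ (recall $H\cdot F=1$, and any real point of $E\cup\overline{E}$ lies on both components), and over a real point $p$ with $\pi^{-1}(p)$ smooth the closure of $\pi^{-1}(p)$ itself, which lies outside $B$; over non-real closed points the two conjugate fibres are disjoint and equivariance is automatic. Hence at every stage there is either a real $(-1)$-curve or a disjoint conjugate pair, distinct from the marked component, available for contraction, and induction on the number of components finishes the proof. (For the SNC-minimality of $G_{p}$ in part (c) nothing more is needed: the paper's notion of SNC-minimality is relative to the base field $\mathbb{R}$, so the terminal stage of the equivariant contraction process inside $G_{p}$ is SNC-minimal by definition.)
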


Only little is known so far towards the classification of the above
$\mathbb{A}^{1}$-fibered algebraic models $\pi:S\rightarrow\mathbb{A}_{\mathbb{R}}^{1}$
of $\mathbb{R}^{2}$ up to birational diffeomorphism. At least, we
have the following criterion: 
\begin{thm}
\label{thm:rectif-criterion}$($\cite[Theorem 4.9 and Corollary 4.10]{DuMa17}$)$
Let $\pi:S\rightarrow\mathbb{A}_{\mathbb{R}}^{1}$ be an $\mathbb{A}^{1}$-fibered
algebraic model of $\mathbb{R}^{2}$. If all but at most one real
fibers of $\pi$ are reduced then $S$ is birationally diffeomorphic
to $\mathbb{A}_{\mathbb{R}}^{2}$. 
\end{thm}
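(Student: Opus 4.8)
The plan is to use the completion furnished by Lemma~\ref{lem:A1Fibmodel-completion} and to remove the multiplicities of the singular fibers through a sequence of birational modifications that are isomorphisms on the real locus. By Theorem~\ref{thm:-A1Fib-Model} the singular fibers of $\pi$ are exactly the non-reduced ones, each being geometrically irreducible, real, of odd multiplicity, and isomorphic to $\mathbb{A}^1_{\mathbb{R}}$ with its reduced structure; hence the hypothesis that all but at most one real fiber is reduced means precisely that $\pi$ has at most one multiple fiber $mF$, with $m\geq 1$ odd and $F\simeq\mathbb{A}^1_{\mathbb{R}}$. I would treat the cases $m=1$ and $m\geq 3$ separately.

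When $m=1$ there is no multiple fiber, so $\pi\colon S\to\mathbb{A}^1_{\mathbb{R}}$ is a smooth $\mathbb{A}^1$-fibration all of whose fibers are reduced. Such a fibration is a Zariski-locally trivial $\mathbb{A}^1$-bundle over the affine line, and since the coordinate ring of $\mathbb{A}^1_{\mathbb{R}}$ is a principal ideal domain with trivial Picard group, every such bundle is trivial; thus $S\simeq\mathbb{A}^2_{\mathbb{R}}$ and there is nothing to prove. This case also serves as the base of the induction used for the general one.

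For $m\geq 3$, I would analyse the single degenerate fiber $\overline{\pi}^{-1}(p_0)$ over the real point $p_0$ carrying the multiple fiber. By Lemma~\ref{lem:A1Fibmodel-completion} it is a tree of rational curves formed by $G_{p_0}$ together with the closure $\overline{F}$ of the multiple fiber, and the multiplicity $m$ of $\overline{F}$ in this fiber can be read off from the tree. The central construction is a sequence of elementary transformations of the $\mathbb{P}^1$-fibration $\overline{\pi}$ --- each consisting of blowing up a point lying on a fiber and contracting the strict transform of that fiber --- performed only at pairs of complex conjugate, non-real points. Since no real point lies in the center of such a modification, each restricts to an isomorphism between the real loci and, after transporting the boundary divisor $B$, induces a birational diffeomorphism between the corresponding affine models in the sense of Definition~\ref{def:RegularRregular}. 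The quantitative heart of the matter, where the parity of $m$ and the presence of a single multiple fiber intervene, is that these non-real modifications can be arranged so as to strictly decrease $m$ while keeping the fibration of the type described in Theorem~\ref{thm:-A1Fib-Model}; the homology computation of Remark~\ref{prop:Top-fibers}, here $H_1(S_{\mathbb{C}},\mathbb{Z})=\mathbb{Z}_m$, guides the bookkeeping and explains why the complexification is allowed to change its topology while the real locus is left untouched.

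Iterating this reduction step lowers $m$ to $1$, at which point the previous case identifies the resulting model with $\mathbb{A}^2_{\mathbb{R}}$, and composing the birational diffeomorphisms produced at each stage yields the desired birational diffeomorphism $S\dashrightarrow\mathbb{A}^2_{\mathbb{R}}$. The main obstacle I expect is the explicit control of the elementary transformations: at each stage one must exhibit conjugate pairs of non-real centers whose successive blow-ups and contractions decrease the multiplicity without creating a new real multiple fiber and without disturbing the real locus, and one must verify throughout that it is the affine surface $S$, and not merely its completion, that is transported --- that is, that $B$ is carried to the boundary divisor of the next model. Establishing that this can be achieved exactly when there is a single fiber of odd multiplicity is the crux of the argument.
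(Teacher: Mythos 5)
Your sketch has to stand on its own here, since the paper itself gives no proof of Theorem \ref{thm:rectif-criterion}: it is quoted from \cite[Theorem 4.9 and Corollary 4.10]{DuMa17}. The first half of your argument is fine --- the case with no multiple fiber, handled via \cite{KaMi78}, is exactly how the paper itself argues in the proof of Proposition \ref{thm:Main-rectif-criterion}, condition a). But the step you yourself call ``the crux'' --- that elementary transformations of $\overline{\pi}$ with non-real centers can be arranged to strictly decrease the multiplicity $m$ --- is not merely left open: it is impossible. Any birational diffeomorphism $\phi:S\dashrightarrow S'$ compatible with the fibrations, i.e. with $\pi'\circ\phi=\pi$ (and every elementary transformation of $\overline{\pi}$, transported to the affine parts, is of this type), preserves the multiplicity of the real degenerate fiber. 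Indeed, write $\pi^{*}(p_{0})=mF$ with $F\simeq\mathbb{A}^{1}_{\mathbb{R}}$. Since $\phi$ is defined at every real point and is injective on real loci, $F$ cannot be contracted (its real locus is infinite), and since $F(\mathbb{R})$ is Zariski-dense in $F$, $\phi$ is a local isomorphism at the generic point $\eta_{F}$, identifying the discrete valuation rings $\mathcal{O}_{S,\eta_{F}}$ and $\mathcal{O}_{S',\eta_{F'}}$; from $\pi^{*}t=\phi^{*}\pi'^{*}t$ for a local parameter $t$ at $p_{0}$ one gets $m'=\mathrm{ord}_{F'}(\pi'^{*}t)=\mathrm{ord}_{F}(\pi^{*}t)=m$. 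Concretely, in your moves the non-real center lies either on a fiber over a non-real point of the base, in which case the contraction is harmless but the fiber over $p_{0}$ is untouched, or on a real fiber, in which case contracting the strict transform of that fiber collapses a curve with infinite real locus and so is not a birational diffeomorphism. Either way $m$ never moves, and your induction can never start.

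This also explains why your appeal to Remark \ref{prop:Top-fibers} points in the wrong direction: $H_{1}(S_{\mathbb{C}},\mathbb{Z})=\mathbb{Z}_{m}$ while $H_{1}(\mathbb{A}^{2}_{\mathbb{C}},\mathbb{Z})=0$, and blow-ups at non-real points and contractions of non-real $(-1)$-curves never change $H_{1}$ of the complexification. So the homology is not ``bookkeeping'' for your moves; it is the obstruction showing that no sequence of blow-ups/blow-downs of the surface can suffice. A birational diffeomorphism can only alter $H_{1}(S_{\mathbb{C}})$ by deleting and inserting pairs of non-real conjugate \emph{curves} (open immersions with non-real complement), and the entire content of the cited result is the explicit construction, in \cite[\S 4.2]{DuMa17}, of a common open subset of $S$ and $\mathbb{A}^{2}_{\mathbb{R}}$ whose complements on both sides are non-real; that construction does not commute with $\pi$, which by the argument above is unavoidable. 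This is the missing idea, and without it the case $m\geq 3$ of your proof has no valid mechanism.
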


\begin{example}
\label{exa:FakePlanes} Let $m\geq3$ be an odd integer and let $S_{m}$
be the smooth surface in $\mathbb{A}_{\mathbb{R}}^{3}$ defined by
the equation $u^{2}z=v^{m}-u$. Since $m$ is odd, the map $\mathbb{R}^{2}\rightarrow S_{m}(\mathbb{R})$,
$(u,z)\mapsto(u,\sqrt[m]{u^{2}z+u},z)$ is an homeomorphism, and since
$S_{m}$ is smooth, it follows that $S_{m}(\mathbb{R})$ is diffeomorphic
to $\mathbb{R}^{2}$. On the other hand, the projection $\pi=\mathrm{pr}_{u}:S_{m}\rightarrow\mathbb{A}_{\mathbb{R}}^{1}$
is an $\mathbb{A}^{1}$-fibration restricting to a trivial $\mathbb{A}^{1}$-bundle
over $\mathbb{A}_{\mathbb{R}}^{1}\setminus\{0\}$ and whose fiber
$\pi^{-1}(0)\simeq\mathrm{Spec}(\mathbb{R}[v]/(v^{m})[z])$ is isomorphic
to $\mathbb{A}_{\mathbb{R}}^{1}$, of multiplicity $m$. It is then easy to check that $H_{i}((S_{m})_{\mathbb{C}};\mathbb{Z})=0$ for every $i\geq2$, while
$H_{1}((S_{m})_{\mathbb{C}};\mathbb{Z})\simeq\mathbb{Z}_{m}$. Thus $S_{m}$ is an algebraic model of $\mathbb{R}^{2}$ non biregularly isomorphic to $\mathbb{A}_{\mathbb{R}}^{2}$ but which is birationally diffeomorphic to it by the previous theorem. 
\end{example}

\section{Rational closed embeddings of lines}

Given a rational map $f:X\dashrightarrow Y$ between real algebraic
varieties, we denote by $f_{*}(X)$ the scheme theoretic image of
the graph $\Gamma_{f}\subset X\times Y$ by the second projection. 
\begin{defn}
Let $X$ and $Y$ be real algebraic varieties with non-empty real
loci. 

1) An $\mathbb{R}$-regular rational map $f:X\dashrightarrow Y$ is
called an $\mathbb{R}$-\emph{regular closed embedding} if it is birational onto its image and $f\colon X(\mathbb{R})\rightarrow Y(\mathbb{R})$ is a closed embedding of topological manifolds, whose image coincides with $(f_{*}(X))(\mathbb{R})$. 

2) Two $\mathbb{R}$-regular closed embeddings $f:X\dashrightarrow Y$
and $g:X\dashrightarrow Y$ are called\emph{ equivalent} if there
exists   $\mathbb{R}$-biregular rational maps  $\alpha:X\dashrightarrow X$ and
$\beta:Y\dashrightarrow Y$ such that $\beta\circ f=g\circ\alpha$. 
\end{defn}

In the rest of the article, we focus on $\mathbb{R}$-regular closed
embeddings of the affine line $\mathbb{A}_{\mathbb{R}}^{1}=\mathrm{Spec}(\mathbb{R}[t])$
into the affine plane $\mathbb{A}^2_{\mathbb{R}}=\mathrm{Spec}(\mathbb{R}[x,y])$.

\begin{defn}
For simplicity, an $\mathbb{R}$-regular closed embedding of $\mathbb{A}_{\mathbb{R}}^{1}$ into a smooth real algebraic variety
is called a \emph{rational smooth embedding}. 
\end{defn}

We begin
with a series of examples illustrating this notion.

\subsection{First examples and non-examples }
\begin{example}
The rational map 
\[
f:\mathbb{A}_{\mathbb{R}}^{1}\dashrightarrow\mathbb{A}_{\mathbb{R}}^{2},\,t\mapsto(t,\frac{t^{3}}{t^{2}+1})
\]
is a rational smooth embedding. Indeed, since $f$ is the graph
of a rational function $\mathbb{A}_{\mathbb{R}}^{1}\dashrightarrow\mathbb{A}_{\mathbb{R}}^{1}$,
defined everywhere except at the complex point of $\mathbb{A}_{\mathbb{R}}^{1}$
with defining ideal $(t^{2}+1)\subset\mathbb{R}[t]$, it is a locally
closed embedding in the neighborhood of every real point of $\mathbb{A}_{\mathbb{R}}^{1}$.
The scheme theoretic image $f_{*}(\mathbb{A}_{\mathbb{R}}^{1})$ of
$f$ is equal to the closed curve $U=\left\{ x^{3}-(x^{2}+1)y=0\right\} \subset\mathbb{A}_{\mathbb{R}}^{2}$,
and $f$ induces an isomorphism $\mathbb{A}_{\mathbb{R}}^{1}(\mathbb{R})\cong\mathbb{R}\stackrel{\cong}{\rightarrow}U(\mathbb{R})\cong\mathbb{R}$. 
\end{example}

The next two examples enlighten the role of the condition $f\left(X(\mathbb{R})\right)=(f_{*}(X))(\mathbb{R})$
in the definition of an $\mathbb{R}$-regular closed embedding. 
\begin{example}
The rational map 
\[
f:\mathbb{A}_{\mathbb{R}}^{1}\dashrightarrow\mathbb{A}_{\mathbb{R}}^{2},\;t\mapsto(\frac{1-t^{2}}{t^{2}+1},\frac{2t}{t^{2}+1})
\]
is a locally closed embedding in the neighborhood of every real point
of $\mathbb{R}$ but not a rational smooth embedding.
Indeed, its scheme theoretic image is the conic $U=\left\{ x^{2}+y^{2}=1\right\} $
whose real locus is isomorphic to the circle $\mathbb{S}^{1}$, so
that the map $\mathbb{A}_{\mathbb{R}}^{1}(\mathbb{R})\cong\mathbb{R}\rightarrow f_{*}(\mathbb{A}_{\mathbb{R}}^{1})(\mathbb{R})\cong\mathbb{S}^{1}$
induced by $f$ is not an isomorphism. 
\end{example}

\begin{rem} \label{rem:JS-open} In the previous example, the smooth embedding $f(\mathbb{R}):\mathbb{R}\rightarrow \mathbb{R}^2$ induced by $f$ extends to an embedding  $S^1=\mathbb{R}\cup \{\infty\}\rightarrow \mathbb{R}^2$ thats maps the point $\infty$ onto the point $(-1,0)$, in particular $f(\mathbb{R})$ is not a proper embedding hence is not equivalent under composition by a diffeomorphism of $\mathbb{R}^2$ to a linear embedding.  In contrast, as mentioned in the introduction, it is a certainly well-known consequence of the Jordan-Schoenflies theorem that any two smooth proper embeddings of $\mathbb{R}$ into $\mathbb{R}^2$ are equivalent under composition by diffeomorphisms of $\mathbb{R}^2$. Let us give a short argument because of a lack of appropriate reference. First observe that every smooth proper embedding $f:\mathbb{R}\rightarrow \mathbb{R}^2$ uniquely extends to an injective continuous map $\overline{f}:S^1=\mathbb{R}\cup\{\infty\} \rightarrow S^2=\mathbb{R}^2\cup \{\infty \}$ such that $\overline{f}^{-1}(\infty)=\infty$. It follows that the closure in $S^2$ of the image $C$ of $f$ has a unique branch at the point $\infty$, which implies in turn that there exists a real number $R>0$ such that for every disc $D$ centered at the origin of $\mathbb{R}^2$ and of radius bigger than or equal to $R$, $C$  intersects the boundary of $D$ transversally in precisely two distinct points. Let $\{D_n\}_{n\in \mathbb{N}}$ be an exhaustion of $\mathbb{R}^2$ by a collection of such closed discs of strictly increasing radiuses. By the Jordan-Schoenflies theorem, there exists a diffeomorphism of $D_0$ which maps $C\cap D_0$ onto the horizontal diameter of $D_0$. We let $h_0$ be an extension of this diffeomorphism to a diffeomorphism of $\mathbb{R}^2$ which is the identity outside $D_1$. By construction, $f_0=h_0\circ f:\mathbb{R}\rightarrow \mathbb{R}^2$ is a smooth proper embedding and the intersection of its image with $D_0$ is equal to the horizontal diameter of $D_0$. Then by the Jordan-Schoenflies theorem again, we can find  a diffeomorphism of $D_2$ restricting to the identity on $D_0$ and mapping $h_0(C)\cap D_2$ onto the horizontal diameter of $D_2$. We then let $h_1$ be an extension of this diffeomorphism to a diffeomorphism of $\mathbb{R}^2$  restricting to the identity outside $D_3$ to obtain a smooth proper embedding $f_1=h_1\circ f_0:\mathbb{R}\rightarrow \mathbb{R}^2$  whose image intersects $D_2$ along the horizontal diameter of $D_2$. Iterating this procedure, we obtain a sequence of diffeomorphisms $h_n$, $n\geq 0$, of $\mathbb{R}^2$ restricting to the identity on $D_{2n-2}$ and outside $D_{2n+1}$ and a diffeomorphism $\varphi_n=h_n\circ \cdots \circ h_1 \circ h_0$ such that $\varphi_n\circ f:\mathbb{R}\rightarrow \mathbb{R}^2$ is a smooth proper embedding whose image intersects $D_{2n}$ along the horizontal diameter of $D_{2n}$. Since by construction the sequence of diffeomorphisms $\varphi_n=h_n\circ \cdots \circ h_1 \circ h_0$, $n\geq 0$, converges on every compact subset of $\mathbb{R}^2$, we conclude that there exists a diffeomorphism $\varphi$ of $\mathbb{R}^2$ such that the composition $\varphi\circ f : \mathbb{R}\rightarrow \mathbb{R}^2$ is the embedding of $\mathbb{R}$ as the horizontal coordinate axes. 
\end{rem} 

\begin{example}
The morphism $f:\mathbb{A}_{\mathbb{R}}^{1}=\mathrm{Spec}(\mathbb{R}[t])\rightarrow\mathbb{A}_{\mathbb{R}}^{2}=\mathrm{Spec}(\mathbb{R}[x,y]),\; t\mapsto(t^{2},t(t^{2}+1))$
induced by the normalization map of the nodal cubic $U\subset\mathbb{A}_{\mathbb{R}}^{2}$
defined by the equation $y^{2}=x\left(x+1\right)^{2}$ is a locally
closed immersion in a neighborhood of every real point of $\mathbb{A}_{\mathbb{R}}^{1}$ but not a rational smooth embedding. Indeed, $f(\mathbb{A}_{\mathbb{R}}^{1}(\mathbb{R}))$
is a proper subset of $f_{*}(\mathbb{A}_{\mathbb{R}}^{1})(\mathbb{R})=U\left(\mathbb{R}\right)=f(\mathbb{A}_{\mathbb{R}}^{1}(\mathbb{R}))\cup(-1,0)$.  

Note that the morphism $f(\mathbb{R}):\mathbb{R}\rightarrow \mathbb{R}^2$, $t\mapsto(t^{2},t(t^{2}+1))$ is a smooth proper embedding, which is thus equivalent in the category of smooth manifolds to a proper embedding as a linear subspace of $\mathbb{R}^2$. The following construction provides a diffeomorphism $\psi$ of $\mathbb{R}^2$ such that the composition $\psi\circ f(\mathbb{R})$ is a proper embedding as the graph of a smooth function, from which it is straightforward to deduce a smooth proper embedding as a linear subspace. Let $h:\mathbb{R}\rightarrow [0,1]$ be a smooth function such that $h(u)=1$ if $u\geq 0$ and $h(u)=0$ if $u\leq -\tfrac{1}{2}$. It then follows from the inverse function theorem that the map $$\varphi:\mathbb{R}^2\rightarrow \mathbb{R}^2, \quad (x,y)\mapsto (y,(1+yh(y))x)$$ is a diffeomorphism. Since by construction of $h$, we have $h(t^2)=1$ for every $t\in \mathbb{R}$, $\varphi$ maps the parametric curve $t\mapsto (t,t^2)$ onto the parametric curve $t\mapsto (t^2,(1+t^2h(t^2))t)=(t^2,t+t^3)$. So $\varphi^{-1}\circ f(\mathbb{R}):\mathbb{R}\rightarrow \mathbb{R}^2$ is the smooth proper embedding defined by $t\mapsto (t,t^2)$.
 \end{example}

\subsection{Rational embeddings of lines and projective rational plane curves}
\begin{notation}
Unless otherwise stated, in the rest of the article, we identify $\mathbb{A}_{\mathbb{R}}^{1}$
with the open complement in $\mathbb{P}_{\mathbb{R}}^{1}=\mathrm{Proj}(\mathbb{R}[u,v])$
of the $\mathbb{R}$-rational point $\left\{ v=0\right\} $, and $\mathbb{A}_{\mathbb{R}}^{2}$
with the open complement in $\mathbb{P}_{\mathbb{R}}^{2}=\mathrm{Proj}(\mathbb{R}\left[X,Y,Z\right])$
of the real line $L_{\infty}=\left\{ Z=0\right\} \simeq\mathbb{P}_{\mathbb{R}}^{1}$. 
\end{notation}

Every rational smooth embedding $f:\mathbb{A}_{\mathbb{R}}^{1}\dashrightarrow\mathbb{A}_{\mathbb{R}}^{2}$
extends to a morphism $\overline{f}:\mathbb{P}_{\mathbb{R}}^{1}\rightarrow\mathbb{P}_{\mathbb{R}}^{2}$
birational onto its image $C\subset\mathbb{P}_{\mathbb{R}}^{2}$,
such that $f_{*}(\mathbb{A}_{\mathbb{R}}^{1})=\overline{f}_{*}(\mathbb{A}_{\mathbb{R}}^{1})=C\cap\mathbb{A}_{\mathbb{R}}^{2}$
and $(f_{*}(\mathbb{A}_{\mathbb{R}}^{1}))(\mathbb{R})=(\overline{f}_{*}(\mathbb{A}_{\mathbb{R}}^{1}))(\mathbb{R})=(C\cap\mathbb{A}_{\mathbb{R}}^{2})(\mathbb{R})$.
Conversely, given a rational curve $C\subset\mathbb{P}_{\mathbb{R}}^{2}$,
the composition of the normalization morphism $\nu:\mathbb{P}_{\mathbb{R}}^{1}\rightarrow C$
with the inclusion $C\hookrightarrow\mathbb{P}_{\mathbb{R}}^{2}$
is a morphism $\overline{f}:\mathbb{P}_{\mathbb{R}}^{1}\rightarrow\mathbb{P}_{\mathbb{R}}^{2}$
birational onto its image $C$. The inverse image by $\nu$ of $C\cap L_{\infty}$
is a non-empty closed subset of $\mathbb{P}_{\mathbb{R}}^{1}$ defined over
$\mathbb{R}$. Its complement is a smooth affine rational curve $U$
on which $\overline{f}$ restricts to a proper morphism $f_{0}:U\rightarrow\mathbb{A}_{\mathbb{R}}^{2}$.
The following lemma characterizes rational planes curves $C$ whose associated morphisms $f_{0}:U\rightarrow\mathbb{A}_{\mathbb{R}}^{2}$ represent  rational smooth embeddings $f:\mathbb{A}_{\mathbb{R}}^{1}\dashrightarrow\mathbb{A}_{\mathbb{R}}^{2}$.

\begin{lem}
\label{lem:RatEmbed-from-projcurve} With the above notation, for a rational curve $C\subset\mathbb{P}_{\mathbb{R}}^{2}$,
the following are equivalent: 

a) $f_{0}:U\rightarrow\mathbb{A}_{\mathbb{R}}^{2}$ represents a rational
smooth embedding $f:\mathbb{A}_{\mathbb{R}}^{1}\dashrightarrow\mathbb{A}_{\mathbb{R}}^{2}$.

b) $C$ is smooth at every real point of $C\cap\mathbb{A}_{\mathbb{R}}^{2}$
and $U(\mathbb{R})=\mathbb{R}$. 

c) $C$ is smooth at every real point of $C\cap\mathbb{A}_{\mathbb{R}}^{2}$
and the inverse image of $C\cap L_{\infty}$ in the normalization of $C$ contains a unique real
point. 
\end{lem}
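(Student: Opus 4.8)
The plan is to prove the two equivalences (b) $\Leftrightarrow$ (c) and (a) $\Leftrightarrow$ (b) separately, the first being an elementary topological count and the second carrying the substance of the statement. Throughout I would freely use the objects set up just before the lemma: the normalization $\nu\colon\mathbb{P}^1_{\mathbb{R}}\to C$, the induced morphism $\overline{f}$, the non-empty closed subset $Z=\nu^{-1}(C\cap L_\infty)$ defined over $\mathbb{R}$, its complement $U=\mathbb{P}^1_{\mathbb{R}}\setminus Z$, and the proper morphism $f_0=\overline{f}|_U\colon U\to\mathbb{A}^2_{\mathbb{R}}$.

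For (b) $\Leftrightarrow$ (c), I would note that $Z$ is a finite Galois-stable subset of $\mathbb{P}^1_{\mathbb{R}}$, so its real points form a finite subset of $\mathbb{P}^1_{\mathbb{R}}(\mathbb{R})\cong S^1$ while its non-real points occur in complex conjugate pairs. Since $U(\mathbb{R})=S^1\setminus Z(\mathbb{R})$, and $S^1$ minus a finite non-empty set is homeomorphic to $\mathbb{R}$ exactly when a single point is removed, the condition $U(\mathbb{R})=\mathbb{R}$ holds if and only if $Z$ contains a unique real point. This is precisely condition (c), settling (b) $\Leftrightarrow$ (c).

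The core is (a) $\Leftrightarrow$ (b). For the direction (a) $\Rightarrow$ (b), assuming $f_0$ represents a rational smooth embedding $f$, I would first exploit $\mathbb{R}$-regularity: since $f$ is defined at every real point of $\mathbb{A}^1_{\mathbb{R}}$ and sends them into $\mathbb{A}^2_{\mathbb{R}}=\mathbb{P}^2_{\mathbb{R}}\setminus L_\infty$, the only real point of $\mathbb{P}^1_{\mathbb{R}}$ mapped into $L_\infty$ is the point at infinity, whence $Z$ has a unique real point and $U(\mathbb{R})=\mathbb{R}$. To obtain smoothness of $C$ at the real points of $C\cap\mathbb{A}^2_{\mathbb{R}}$, I would argue by contradiction at a singular real point $q$ by inspecting the fibre $\nu^{-1}(q)$: if two real branches meet at $q$ then $\nu$, hence $f$ on $\mathbb{R}$, is not injective; if the branches through $q$ form a complex conjugate pair then $q$ is a real point of $C\cap\mathbb{A}^2_{\mathbb{R}}$ with no real preimage, so $q\in(f_*(\mathbb{A}^1_{\mathbb{R}}))(\mathbb{R})\setminus f(\mathbb{R})$, contradicting the requirement that the image of $f$ on real points equal $(f_*(\mathbb{A}^1_{\mathbb{R}}))(\mathbb{R})$; and if $q$ is unibranch with $\nu$ ramified over it then $f$ fails to be an immersion at $q$, contradicting that it induces a smooth embedding of $\mathbb{R}$. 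Hence $C$ is smooth at every such $q$.

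For the converse (b) $\Rightarrow$ (a), condition (b) through (c) singles out the unique real point $p_\infty\in Z$, and I would identify $\mathbb{P}^1_{\mathbb{R}}=\mathrm{Proj}(\mathbb{R}[u,v])$ so that $p_\infty=\{v=0\}$; then $U\subseteq\mathbb{A}^1_{\mathbb{R}}$ and $f:=\overline{f}|_{\mathbb{A}^1_{\mathbb{R}}}\colon\mathbb{A}^1_{\mathbb{R}}\dashrightarrow\mathbb{A}^2_{\mathbb{R}}$ has domain $U$. As the remaining points of $Z$ are non-real, $U$ contains every real point of $\mathbb{A}^1_{\mathbb{R}}$, so $f$ is $\mathbb{R}$-regular, and it is birational onto $C\cap\mathbb{A}^2_{\mathbb{R}}$ since $\nu$ is birational. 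Using that $C$ is smooth at every real point of $C\cap\mathbb{A}^2_{\mathbb{R}}$, so that $\nu$ is a local isomorphism there, each such point has a unique, necessarily real, preimage in $U$; this yields a bijection $U(\mathbb{R})\to(C\cap\mathbb{A}^2_{\mathbb{R}})(\mathbb{R})$ which is an immersion, while properness of $f_0$ makes $f(\mathbb{R})$ proper, hence a closed embedding onto the closed set $(C\cap\mathbb{A}^2_{\mathbb{R}})(\mathbb{R})$. Thus $f$ is a rational smooth embedding. I expect the main obstacle to be the local analysis at real singular points in (a) $\Rightarrow$ (b): one must match each degeneration of $\nu$ over a real point — several real branches, a complex conjugate pair of branches, or a ramified single branch — to a precise failure of injectivity, of the image condition, or of the immersion property, and conversely check that smoothness of $C$ simultaneously secures all three.
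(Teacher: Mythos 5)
Your treatment of b) $\Leftrightarrow$ c) and of b) $\Rightarrow$ a) is correct and essentially the same as the paper's: the paper also gets b) $\Leftrightarrow$ c) by counting real points removed from $\mathbb{P}^1_{\mathbb{R}}(\mathbb{R})$, and for b) $\Rightarrow$ a) it likewise passes through the smooth locus $C'_{\mathrm{reg}}$ of $C'=C\cap\mathbb{A}^2_{\mathbb{R}}$, over which $\nu$ restricts to an isomorphism defined over $\mathbb{R}$, to produce a diffeomorphism of $U(\mathbb{R})$ onto the closed submanifold $C'(\mathbb{R})$.

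The gap is in a) $\Rightarrow$ b): your case analysis of $\nu^{-1}(q)$ at a real singular point $q\in C\cap\mathbb{A}^2_{\mathbb{R}}$ is not exhaustive. You treat three configurations --- several real branches, a pair of complex conjugate branches, a single ramified branch --- but you omit the mixed configuration in which $\nu^{-1}(q)$ consists of one real point at which the branch is smooth \emph{together with} one or more pairs of complex conjugate points. This is exactly a point of type $D_4^*$ in the paper's notation (e.g.\ an ordinary triple point with tangent cone of the shape $y(x^2+y^2)=0$), and such points do occur at affine positions on rational curves: for instance the rational quartic parametrized by $[u:v]\mapsto[u^2(u^2+v^2):uv(u^2+v^2):R(u,v)]$, with $R$ of degree $4$ not vanishing at $[0:1]$, $[\pm i:1]$, has an ordinary triple point at $[0:0:1]$ whose three branches are the images of $[0:1]$, $[i:1]$, $[-i:1]$.

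In this missing case none of your three contradictions is available: $f$ is injective on real points near $q$ (there is a unique real preimage), it is an immersion there (the real branch is smooth, as in the parametrization above where $dy/du\neq 0$ at $u=0$), and the image condition $f(\mathbb{R})=(f_*(\mathbb{A}^1_{\mathbb{R}}))(\mathbb{R})$ is not violated, since the conjugate branches contribute no real points of $C$ near $q$ other than $q$ itself, which already lies on the real branch; the real locus of $C$ near $q$ is a smooth arc. So the purely topological and differential properties of $f(\mathbb{R})$ that you are allowed to use --- injectivity, immersivity, properness, equality of the image with the full real locus --- simply cannot detect the singularity of $C$ at such a $q$, and your contradiction argument collapses exactly there. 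Be aware that the paper's own proof of this direction is its weakest point: it performs no branch analysis at all and deduces smoothness of $C$ at real affine points directly from the assertion that $f(\mathbb{R})$ is a proper embedding with image $(C\cap\mathbb{A}^2_{\mathbb{R}})(\mathbb{R})$. Closing your gap therefore cannot be done by refining the same trichotomy; one has to bring in a genuinely stronger input (for instance, reading the definition of an $\mathbb{R}$-regular closed embedding as requiring $f$ to be a locally closed immersion of schemes near every real point, a property the paper's examples discuss but your argument never uses), and without such an input the mixed configuration remains a counterexample to the method.
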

\begin{proof}

Since $U(\mathbb{R})=(\mathbb{P}_{\mathbb{R}}^{1}\setminus\overline{f}^{-1}(C\cap L_{\infty}))(\mathbb{R})$,
we see that $U(\mathbb{R})=\mathbb{R}$ if and only if $\overline{f}^{-1}(C\cap L_{\infty})$ contains a unique
real point. This proves the equivalence of b) and c). Now assume that
$f_{0}:U\rightarrow\mathbb{A}_{\mathbb{R}}^{2}$ represents a smooth
rational embedding $f:\mathbb{A}_{\mathbb{R}}^{1}\dashrightarrow\mathbb{A}_{\mathbb{R}}^{2}$.
Since $U$ is by definition the maximal affine open subset of $\mathbb{P}_{\mathbb{R}}^{1}$
on which $\overline{f}$ restricts to a morphism with image contained
in $\mathbb{A}_{\mathbb{R}}^{2}$, it follows that $U$ is the domain
of definition of $f:\mathbb{A}_{\mathbb{R}}^{1}\dashrightarrow\mathbb{A}_{\mathbb{R}}^{2}$.
This implies that $U(\mathbb{R})=\mathbb{A}_{\mathbb{R}}^{1}(\mathbb{R})=\mathbb{R}$.
Since $f_{*}(\mathbb{A}_{\mathbb{R}}^{1})=C\cap\mathbb{A}_{\mathbb{R}}^{2}$
and $f:\mathbb{A}_{\mathbb{R}}^{1}(\mathbb{R})=U(\mathbb{R})\rightarrow\mathbb{A}_{\mathbb{R}}^{2}(\mathbb{R})$
is a proper embedding of topological manifolds with image equal to
$(f_{*}(\mathbb{A}_{\mathbb{R}}^{1}))(\mathbb{R})=(C\cap\mathbb{A}_{\mathbb{R}}^{2})(\mathbb{R})$
we conclude that $C$ is smooth at every real point of $C\cap\mathbb{A}_{\mathbb{R}}^{2}$.

Conversely, assume that if $C$ is smooth at every real point of $C'=C\cap\mathbb{A}_{\mathbb{R}}^{2}$
and that $\overline{f}^{-1}(C\cap L_{\infty})$ contains a unique
real point, say $p_{\infty}\in\mathbb{P}_{\mathbb{R}}^{1}$. Then
$U$ is an open subset of $\mathbb{P}_{\mathbb{R}}^{1}\setminus\{p_{\infty}\}\simeq\mathbb{A}_{\mathbb{R}}^{1}$
with real locus $U(\mathbb{R})$ equal to $\mathbb{A}_{\mathbb{R}}^{1}(\mathbb{R})$
and the morphism $f_{0}:U\rightarrow\mathbb{A}_{\mathbb{R}}^{2}$,
which is the composition of the normalization $\nu:U\rightarrow C'$
with the closed immersion $C'\hookrightarrow\mathbb{A}_{\mathbb{R}}^{2}$,
represents an $\mathbb{R}$-regular rational map $f:\mathbb{A}_{\mathbb{R}}^{1}\dashrightarrow\mathbb{A}_{\mathbb{R}}^{2}$ birational onto its image and 
such that $f_{*}(\mathbb{A}_{\mathbb{R}}^{1})=C'$. Let $V\subset U$
be the inverse image by $\nu$ of the smooth locus $C'_{\mathrm{reg}}$
of $C'$ and let $\tilde{\nu}:V\rightarrow C'_{\mathrm{reg}}$ be
the induced isomorphism. Since $C'$ is smooth at every real point,
we have $C'_{\mathrm{reg}}(\mathbb{R})=C'(\mathbb{R})$. This implies
in turn that $V(\mathbb{R})=U(\mathbb{R})\simeq\mathbb{A}_{\mathbb{R}}^{1}(\mathbb{R})$
hence that $\tilde{\nu}:\mathbb{A}_{\mathbb{R}}^{1}(\mathbb{R})\simeq V(\mathbb{R})\rightarrow C'_{\mathrm{reg}}(\mathbb{R})$
is a diffeomorphism onto its image, which is equal to the closed submanifold
$C'(\mathbb{R})=(f_{*}(\mathbb{A}_{\mathbb{R}}^{1}))(\mathbb{R})$
of $\mathbb{A}_{\mathbb{R}}^{2}(\mathbb{R})$. This shows that $f:\mathbb{A}_{\mathbb{R}}^{1}\dashrightarrow\mathbb{A}_{\mathbb{R}}^{2}$
is a rational smooth embedding.
\end{proof}

\begin{example}
\label{ex:cubic} Let $\overline{f}:\mathbb{P}_{\mathbb{R}}^{1}\rightarrow\mathbb{P}_{\mathbb{R}}^{2}$,
$[u:v]\mapsto\left[uv^{2}:u^{3}:u^{2}v+v^{3}\right]$ be the morphism
induced by the normalization map of the irreducible cubic $C\subset\mathbb{P}_{\mathbb{R}}^{2}$
with equation $Z^{2}Y-X(X+Y)^{2}=0$. The intersection of $C$ with
$L_{\infty}$ consists of two real points: $p_{\infty}=[0:1:0]$ and
the unique singular point $p_{s}=\left[1:-1:0\right]$ of $C$, which
is a real ordinary double point with non-real complex conjugate tangents,
in other words, $\overline{f}^{-1}(p_{s})$ is a $\mathbb{C}$-rational
point of $\mathbb{P}_{\mathbb{R}}^{1}$. With the notation above,
we have $U\simeq\mathrm{Spec}(\mathbb{R}[t]_{t^{2}+1})$ and 
$f_{0}:U\rightarrow\mathbb{A}_{\mathbb{R}}^{2}$
is the morphism defined by 
\[
t\mapsto(\frac{t}{t^{2}+1},\frac{t^{3}}{t^{2}+1})
\]
Clearly $U(\mathbb{R})=\mathbb{A}_{\mathbb{R}}^{1}(\mathbb{R})$ and
so
$f_{0}:U\rightarrow\mathbb{A}_{\mathbb{R}}^{2}$
defines
an $\mathbb{R}$-regular map $f:\mathbb{A}_{\mathbb{R}}^{1}\dashrightarrow\mathbb{A}_{\mathbb{R}}^{2}$.
Since $p_{s}\in C\cap L_{\infty}$, the affine part $C\cap\mathbb{A}_{\mathbb{R}}^{2}\simeq\left\{ xy^{2}+2x^{2}y+x^{3}-y=0\right\} $
is a smooth curve, and so $f:\mathbb{A}_{\mathbb{R}}^{1}\dashrightarrow\mathbb{A}_{\mathbb{R}}^{2}$
is a rational smooth embedding. 
\end{example}

\subsection{Biddable and rectifiable lines}

\begin{defn}
\label{def:Rectif} A rational smooth embedding $f:\mathbb{A}_{\mathbb{R}}^{1}\dashrightarrow\mathbb{A}_{\mathbb{R}}^{2}$
is called: 

a) \emph{rectifiable} if it is equivalent to the linear closed
embedding $j_{\mathrm{lin}}:\mathbb{A}_{\mathbb{R}}^{1}\rightarrow\mathbb{A}_{\mathbb{R}}^{2}$,
$t\mapsto(t,0)$. 

b)\emph{ biddable} if there exists a birational
diffeomorphism $\alpha:\mathbb{A}_{\mathbb{R}}^{2}\dashrightarrow S$
onto an $\mathbb{A}^{1}$-fibered algebraic model $\pi:S\rightarrow\mathbb{A}_{\mathbb{R}}^{1}$
of $\mathbb{R}^{2}$ such that the induced rational smooth embedding $\alpha\circ f:\mathbb{A}_{\mathbb{R}}^{1}\dashrightarrow S$ 
is a regular closed embedding of $\mathbb{A}_{\mathbb{R}}^{1}$ as the support of a fiber of $\pi$. 
\end{defn}

\begin{rem}
\label{rem:StraightFiber}
Note that in case b) in Definition \ref{def:Rectif}, we do require that $\alpha\circ f$ is in fact a morphism $\mathbb{A}_{\mathbb{R}}^{1}\rightarrow S$, ond not only a rational map. 
A rectifiable rational smooth embedding is biddable since for any birational
diffeomorphism $\alpha$ of $\mathbb{A}_{\mathbb{R}}^{2}$ realizing
the equivalence with $j_{\mathrm{lin}}$, $\alpha\circ f$ coincides
with the closed immersion of $\mathbb{A}_{\mathbb{R}}^{1}$ as the
fiber of the trivial $\mathbb{A}^{1}$-bundle $\pi=\mathrm{pr}_{2}:\mathbb{A}_{\mathbb{R}}^{2}\rightarrow\mathbb{A}_{\mathbb{R}}^{1}$
over $0\in\mathbb{A}_{\mathbb{R}}^{1}(\mathbb{R})$.
\end{rem}

The following lemma is an immediate reformulation of Definition \ref{def:Rectif}: 
\begin{lem}
For an a rational smooth embedding $f:\mathbb{A}_{\mathbb{R}}^{1}\dashrightarrow\mathbb{A}_{\mathbb{R}}^{2}$
with associated irreducible rational curve $C=\overline{f}(\mathbb{P}_{\mathbb{R}}^{1})\subset\mathbb{P}_{\mathbb{R}}^{2}$,
the following hold:

a) $f$ is rectifiable if and only if there exists a birational
endomorphism of $\mathbb{P}_{\mathbb{R}}^{2}$ that restricts to a
birational diffeomorphism $\mathbb{P}_{\mathbb{R}}^{2}\setminus L_{\infty}\stackrel{\sim}{\dashrightarrow}\mathbb{P}_{\mathbb{R}}^{2}\setminus L_{\infty}$
and maps $C$ onto a line $\ell\simeq\mathbb{P}_{\mathbb{R}}^{1}$. 

b) $f$ is biddable if and only if there exists
a smooth pair $(V,B)$ with a $\mathbb{P}^{1}$-fibration $\overline{\pi}:V\rightarrow\mathbb{P}_{\mathbb{R}}^{1}$
as in Lemma \ref{lem:A1Fibmodel-completion} such that $\pi=\overline{\pi}|_{S}:S=V\setminus B\rightarrow\mathbb{A}_{\mathbb{R}}^{1}$
is an $\mathbb{A}^{1}$-fibered model of $\mathbb{R}^{2}$, and a
birational map $\mathbb{P}_{\mathbb{R}}^{2}\dashrightarrow V$ restricting
to a birational diffeomorphism $\mathbb{P}_{\mathbb{R}}^{2}\setminus L_{\infty}\stackrel{\sim}{\dashrightarrow}S$
that maps $C$ to an irreducible component of a fiber of $\overline{\pi}$. 
\end{lem}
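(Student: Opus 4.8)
The plan is to unwind Definition~\ref{def:Rectif} in both statements. The only points that are not purely formal are the passage between self-maps of the affine plane and their extensions to the projective completions, and, for biddability, the verification that the resulting rational map is an honest morphism onto the support of a fiber (cf. Remark~\ref{rem:StraightFiber}).

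For part a), suppose first that $f$ is rectifiable, so that there are $\mathbb{R}$-biregular rational maps $\alpha$ of $\mathbb{A}_{\mathbb{R}}^{1}$ and $\beta$ of $\mathbb{A}_{\mathbb{R}}^{2}$ with $\beta\circ f=j_{\mathrm{lin}}\circ\alpha$. Since $\mathbb{A}_{\mathbb{R}}^{2}=\mathbb{P}_{\mathbb{R}}^{2}\setminus L_{\infty}$ is dense in $\mathbb{P}_{\mathbb{R}}^{2}$, the map $\beta$ is the restriction of a birational endomorphism $\overline{\beta}$ of $\mathbb{P}_{\mathbb{R}}^{2}$, and $\beta$ being a birational diffeomorphism means exactly that $\overline{\beta}$ restricts to a birational diffeomorphism of $\mathbb{P}_{\mathbb{R}}^{2}\setminus L_{\infty}$. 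The relation $\beta\circ f=j_{\mathrm{lin}}\circ\alpha$ says that $\beta$ carries $C\cap\mathbb{A}_{\mathbb{R}}^{2}=f_{*}(\mathbb{A}_{\mathbb{R}}^{1})$ onto the image $\{y=0\}$ of $j_{\mathrm{lin}}$; taking closures shows $\overline{\beta}(C)=\ell$, the line $\overline{\{y=0\}}$. Conversely, given such an $\overline{\beta}$ mapping $C$ onto a line $\ell$, the fact that $\beta$ is a diffeomorphism of $\mathbb{R}^2$ and $(C\cap\mathbb{A}_{\mathbb{R}}^{2})(\mathbb{R})\neq\emptyset$ forces $\ell$ to be a real line distinct from $L_{\infty}$; after composing $\beta$ with an affine automorphism (again a birational diffeomorphism) I may assume $\ell\cap\mathbb{A}_{\mathbb{R}}^{2}=\{y=0\}$. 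Then $\alpha:=j_{\mathrm{lin}}^{-1}\circ\beta\circ f$ is an $\mathbb{R}$-regular birational self-map of $\mathbb{A}_{\mathbb{R}}^{1}$; but such a map extends to an element of $\mathrm{PGL}_{2}(\mathbb{R})$ with no real pole on $\mathbb{A}_{\mathbb{R}}^{1}$, hence is an affine automorphism and in particular $\mathbb{R}$-biregular, which realizes the equivalence $f\sim j_{\mathrm{lin}}$.

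For part b), the forward implication is formal: starting from a birational diffeomorphism $\alpha:\mathbb{A}_{\mathbb{R}}^{2}\dashrightarrow S$ onto an $\mathbb{A}^{1}$-fibered model with $\alpha\circ f$ a closed embedding onto the support of a fiber $\pi^{-1}(p)$, I would complete $S$ to a pair $(V,B)$ with $\mathbb{P}^{1}$-fibration $\overline{\pi}$ as in Lemma~\ref{lem:A1Fibmodel-completion}, extend $\alpha$ to a birational map $\Psi:\mathbb{P}_{\mathbb{R}}^{2}\dashrightarrow V$, and observe that $\Psi(C)$ is the closure of $\alpha(C\cap\mathbb{A}_{\mathbb{R}}^{2})=\mathrm{supp}\,\pi^{-1}(p)$, namely the irreducible component of $\overline{\pi}^{-1}(p)$ meeting $S$. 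Thus $C$ is sent to an irreducible component of a fiber.

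The reverse implication of b) is the main obstacle, since one must upgrade the rational map $\alpha\circ f$ to a \emph{morphism} onto the support of a fiber. Here I would consider $g:=\Psi\circ\overline{f}:\mathbb{P}_{\mathbb{R}}^{1}\to V$, which is a morphism because $\mathbb{P}_{\mathbb{R}}^{1}$ is a smooth curve and $V$ is proper, with image the component $D:=\Psi(C)$. Since $\alpha$ is a diffeomorphism of real loci, $g$ maps $\mathbb{A}_{\mathbb{R}}^{1}(\mathbb{R})=\mathbb{R}$ into $S(\mathbb{R})$, so $D$ meets $S$ and is therefore the component of $\overline{\pi}^{-1}(p)$ whose trace $D\cap S$ is the support of $\pi^{-1}(p)\simeq\mathbb{A}_{\mathbb{R}}^{1}$ by Theorem~\ref{thm:-A1Fib-Model}. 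As $D\cap S$ is a smooth rational curve, the surjective birational morphism $g^{-1}(D\cap S)\to D\cap S$ is an isomorphism, and $g^{-1}(D\cap S)$ is an open subset of $\mathbb{P}_{\mathbb{R}}^{1}$ isomorphic to $\mathbb{A}_{\mathbb{R}}^{1}$, hence the complement of a single real point $q$. Because no real point of $\mathbb{A}_{\mathbb{R}}^{1}$ maps into $B$, while the only real point of $\mathbb{P}_{\mathbb{R}}^{1}$ outside $\mathbb{A}_{\mathbb{R}}^{1}(\mathbb{R})$ is $[1:0]$, I conclude $q=[1:0]$, so that $g$ restricts over $\mathbb{A}_{\mathbb{R}}^{1}=\mathbb{P}_{\mathbb{R}}^{1}\setminus\{[1:0]\}$ to an isomorphism onto $D\cap S$. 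This morphism agrees with $\alpha\circ f$ on a dense open subset, so $\alpha\circ f$ extends to a regular closed embedding of $\mathbb{A}_{\mathbb{R}}^{1}$ as the support of $\pi^{-1}(p)$, proving $f$ biddable.
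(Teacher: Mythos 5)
Your proof is correct. It is worth noting that the paper itself offers no argument at all for this lemma: it is introduced with the sentence ``The following lemma is an immediate reformulation of Definition \ref{def:Rectif}'', and no proof environment follows. So your write-up supplies details the authors chose to suppress, and it does so at exactly the two places where the equivalence is not purely formal. In part a), the non-trivial point is the converse: after normalizing $\ell$ to the closure of $\{y=0\}$, you must check that the induced birational self-map $\alpha=j_{\mathrm{lin}}^{-1}\circ\beta\circ f$ of $\mathbb{A}_{\mathbb{R}}^{1}$ is $\mathbb{R}$-biregular, and your observation that an $\mathbb{R}$-regular birational self-map of $\mathbb{A}_{\mathbb{R}}^{1}$ extends to an element of $\mathrm{PGL}_{2}(\mathbb{R})$ whose pole, being a real point of $\mathbb{P}_{\mathbb{R}}^{1}$, must be the point at infinity, so that $\alpha$ is an affine automorphism, is exactly what is needed. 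In part b), the non-trivial point is the one flagged in Remark \ref{rem:StraightFiber}: the hypothesis only gives a birational map sending $C$ to a fiber component, whereas biddability demands that $\alpha\circ f$ be an honest morphism onto the support of a fiber of $\pi$. Your resolution --- pass to the morphism $g=\Psi\circ\overline{f}:\mathbb{P}_{\mathbb{R}}^{1}\rightarrow V$ (which exists since the source is a smooth curve and $V$ is proper), use the fact that $g$ carries $\mathbb{A}_{\mathbb{R}}^{1}(\mathbb{R})$ into $S(\mathbb{R})$ to force the image component $D$ to meet $S$ (thereby ruling out that the proper transform of $C$ lies in $B$), and then conclude via the finite-birational-onto-normal argument that $g$ restricts to an isomorphism from $\mathbb{P}_{\mathbb{R}}^{1}$ minus a single real point, necessarily $[1:0]$, onto $D\cap S=\pi^{-1}(p)_{\mathrm{red}}$ --- is sound, and the identification of the deleted point with $[1:0]$ correctly uses $\mathbb{R}$-regularity of $\alpha\circ f$. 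In short, where the paper asserts, you prove, and the proof holds up.
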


\begin{example}
\label{exa:cubic-continued} Let $f:\mathbb{A}_{\mathbb{R}}^{1}\dashrightarrow\mathbb{A}_{\mathbb{R}}^{2}$
be the rational smooth embedding with associated curve
$C=\{Z^{2}Y-X(X+Y)^{2}=0\}\subset\mathbb{P}_{\mathbb{R}}^{2}$ constructed
in Example \ref{ex:cubic}. The birational endomorphism of $\mathbb{P}_{\mathbb{R}}^{2}$
defined by 
\[
[X:Y:Z]\mapsto[(X+Y)\left((X+Y)^{2}+Z^{2}\right):Z^{2}Y-X(X+Y)^{2}:Z\left((X+Y)^{2}+Z^{2}\right)]
\]
maps $L_{\infty}$ onto itself and restricts to a birational diffeomorphism of $\mathbb{P}^{2}\setminus L_{\infty}$. 
It contracts the union $\{(X+Y)^{2}+Z^{2}=0\}$ of the two non-real complex
conjugate tangents of $C$ at its singular point $p_{s}=[1:-1:0]$
onto the point $[0:1:0]\in L_{\infty}$, and maps $C$ onto the line
$\ell=\{Y=0\}$. It follows that $f:\mathbb{A}_{\mathbb{R}}^{1}\dashrightarrow\mathbb{A}_{\mathbb{R}}^{2}$
is rectifiable, a birational diffeomorphism $\alpha:\mathbb{A}_{\mathbb{R}}^{2}\dashrightarrow\mathbb{A}_{\mathbb{R}}^{2}$
such that $\alpha\circ f(t)=(t,0)$ being given for instance by 
\[
\left(x,y\right)\mapsto(x+y,-x+\frac{x+y}{(x+y)^{2}+1}).
\]
\end{example}

\subsection{\label{subsec:Real-Kodaira-dimension}Real Kodaira dimension: a numerical
obstruction to biddability}

\indent\newline 
Given an SNC divisor $B$ on a real smooth projective surface $V$, we denote by
$B_{\mathbb{R}}\subset B$ the union of all components of $B$ which
are defined over $\mathbb{R}$ and have infinite real loci. We say that $B$ has no \emph{imaginary loop} if
for every two distinct irreducible components $A$ and $A'$ of $B_{\mathbb{R}}$ the intersection
$A\cap A'$ is either empty or consists of real points only. 
If $B$ contains imaginary loops, then the reduced total transform $\hat{B}$ of $B$ in the blow-up $\tau:\hat{V}\rightarrow V$ of the set of non-real singular points of $B_\mathbb{R}$ is an SNC divisor with 
no
imaginary loops for which $\tau$ restricts to an isomorphism $\hat{V}\setminus \hat{B}\simeq V\setminus B$ (see \cite[Lemma 2.5]{BD17}). This implies in particular that every smooth quasi-projective real surface $S$ admits a smooth completion
$(V,B)$ such that that $B$ has no imaginary loop.

\begin{defn} \label{def:rKod} The \emph{real Kodaira dimension} $\kappa_{\mathbb{R}}(S)$ of a smooth quasi-projective real surface $S$ is the Iitaka dimension \cite{Ii70} $\kappa(V,K_{V}+B_{\mathbb{R}})$ of the pair $(V,B_{\mathbb{R}})$, where $(V,B)$ is any smooth completion
of $S$ such that that $B$ has no imaginary loop and $K_{V}$ is a canonical divisor on $V$
\end{defn}

It is established in \cite[Corollary 2.12]{BD17} that the so-defined element $\kappa_{\mathbb{R}}(S)\in\{-\infty,0,1,2\}$ is indeed independent on the
choice of a smooth completion $(V,B)$ of $S$ such that $B$ has no imaginary loop. Furthermore $\kappa_{\mathbb{R}}(S)$ is smaller than or equal
to the usual Kodaira dimension $\kappa(S)=\kappa(V,K_{V}+B)$, with equality in the case where $B=B_{\mathbb{R}}$, and is an invariant  of the isomorphism class of $S$ up to birational diffeomorphisms \cite[Proposition 2.13]{BD17}.

The following proposition then provides a simple numerical necessary
condition for biddability of a rational smooth embedding $f:\mathbb{A}_{\mathbb{R}}^{1}\dashrightarrow\mathbb{A}_{\mathbb{R}}^{2}$: 
\begin{prop}
\label{prop:RealKodObst-fiber} Let $f:\mathbb{A}_{\mathbb{R}}^{1}\dashrightarrow\mathbb{A}_{\mathbb{R}}^{2}$
be a rational smooth embedding and let $C$ be the image
of the corresponding morphism $\overline{f}:\mathbb{P}_{\mathbb{R}}^{1}\rightarrow\mathbb{P}_{\mathbb{R}}^{2}$.
If $f$ is biddable then $\kappa_{\mathbb{R}}(\mathbb{P}^{2}\setminus(C\cup L_{\infty}))=-\infty$. 
\end{prop}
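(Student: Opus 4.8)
The plan is to transport the computation of $\kappa_{\mathbb{R}}(\mathbb{P}^{2}\setminus(C\cup L_{\infty}))$ through the birational diffeomorphism furnished by biddability and to land on the complement of a fiber of an $\mathbb{A}^{1}$-fibration, whose real Kodaira dimension is forced to be $-\infty$ by the mere presence of the fibration. First I would record the identification $\mathbb{P}^{2}\setminus(C\cup L_{\infty})=\mathbb{A}_{\mathbb{R}}^{2}\setminus f_{*}(\mathbb{A}_{\mathbb{R}}^{1})$, which is immediate from $\mathbb{P}_{\mathbb{R}}^{2}\setminus L_{\infty}=\mathbb{A}_{\mathbb{R}}^{2}$ together with the equality $C\cap\mathbb{A}_{\mathbb{R}}^{2}=f_{*}(\mathbb{A}_{\mathbb{R}}^{1})$ established before Lemma \ref{lem:RatEmbed-from-projcurve}.

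Assuming $f$ biddable, let $\alpha:\mathbb{A}_{\mathbb{R}}^{2}\dashrightarrow S$ be a birational diffeomorphism onto an $\mathbb{A}^{1}$-fibered algebraic model $\pi:S\rightarrow\mathbb{A}_{\mathbb{R}}^{1}$ for which $\alpha\circ f$ is a regular closed embedding of $\mathbb{A}_{\mathbb{R}}^{1}$ onto the support $F_{0}$ of a fiber $\pi^{-1}(p_{0})$, $p_{0}\in\mathbb{A}_{\mathbb{R}}^{1}(\mathbb{R})$. The induced diffeomorphism $\alpha(\mathbb{R})$ of real loci carries $f_{*}(\mathbb{A}_{\mathbb{R}}^{1})(\mathbb{R})=f(\mathbb{A}_{\mathbb{R}}^{1}(\mathbb{R}))$ bijectively onto $(\alpha\circ f)(\mathbb{A}_{\mathbb{R}}^{1}(\mathbb{R}))=F_{0}(\mathbb{R})$, hence maps the real locus of $\mathbb{A}_{\mathbb{R}}^{2}\setminus f_{*}(\mathbb{A}_{\mathbb{R}}^{1})$ onto that of $S\setminus F_{0}$. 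Since both $\alpha$ and $\alpha^{-1}$ are $\mathbb{R}$-regular, I would conclude that $\alpha$ restricts to a birational diffeomorphism $\mathbb{A}_{\mathbb{R}}^{2}\setminus f_{*}(\mathbb{A}_{\mathbb{R}}^{1})\dashrightarrow S\setminus F_{0}$, so that by the invariance of the real Kodaira dimension under birational diffeomorphisms \cite[Proposition 2.13]{BD17} one has $\kappa_{\mathbb{R}}(\mathbb{A}_{\mathbb{R}}^{2}\setminus f_{*}(\mathbb{A}_{\mathbb{R}}^{1}))=\kappa_{\mathbb{R}}(S\setminus F_{0})$.

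It then remains to show $\kappa_{\mathbb{R}}(S\setminus F_{0})=-\infty$. As $F_{0}$ is the support of $\pi^{-1}(p_{0})$, we have $S\setminus F_{0}=\pi^{-1}(\mathbb{A}_{\mathbb{R}}^{1}\setminus\{p_{0}\})$, and the restriction of $\pi$ equips this surface with an $\mathbb{A}^{1}$-fibration over the affine curve $\mathbb{A}_{\mathbb{R}}^{1}\setminus\{p_{0}\}$, with generic fiber $\mathbb{A}^{1}$. Passing to complexifications and invoking the easy addition inequality for the logarithmic Kodaira dimension \cite{Ii70} against this fibration, whose general fiber $\mathbb{A}_{\mathbb{C}}^{1}$ has logarithmic Kodaira dimension $-\infty$, gives $\kappa(S\setminus F_{0})=-\infty$ for the usual Kodaira dimension. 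Because $\kappa_{\mathbb{R}}\leq\kappa$ in general (as recalled after Definition \ref{def:rKod}), we obtain $\kappa_{\mathbb{R}}(S\setminus F_{0})=-\infty$, and combining with the previous step and the identification of the first paragraph yields $\kappa_{\mathbb{R}}(\mathbb{P}^{2}\setminus(C\cup L_{\infty}))=-\infty$.

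The step I expect to require the most care is the middle one: verifying that biddability genuinely produces a birational diffeomorphism between the two open complements, rather than merely a birational map matching the curves. The point is that the indeterminacy and exceptional loci of an $\mathbb{R}$-biregular birational map consist of non-real points, so removing the real curves $f_{*}(\mathbb{A}_{\mathbb{R}}^{1})$ and $F_{0}$ --- which correspond under the diffeomorphism $\alpha(\mathbb{R})$ of real loci --- leaves $\alpha$ and $\alpha^{-1}$ still $\mathbb{R}$-regular on the respective complements; only once this is secured does the cited invariance of $\kappa_{\mathbb{R}}$ apply.
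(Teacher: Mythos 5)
Your proof is correct and follows essentially the same route as the paper's: identify $\mathbb{P}^{2}\setminus(C\cup L_{\infty})$ with $\mathbb{A}_{\mathbb{R}}^{2}\setminus f_{*}(\mathbb{A}_{\mathbb{R}}^{1})$, transport $\kappa_{\mathbb{R}}$ through the birational diffeomorphism given by biddability, and then bound $\kappa_{\mathbb{R}}(S\setminus\pi^{-1}(p))$ by the usual Kodaira dimension, which is $-\infty$ because the complement of the fiber still carries an $\mathbb{A}^{1}$-fibration. Your extra care in checking that $\alpha$ restricts to a birational diffeomorphism between the two complements, and your explicit appeal to easy addition, merely spell out steps the paper leaves implicit.
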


\begin{proof}
If $f$ is biddable then by definition there exists a birational
diffeomorphism $\alpha:\mathbb{A}_{\mathbb{R}}^{2}\dashrightarrow S$
to a smooth $\mathbb{A}^{1}$-fibered model $\pi:S\rightarrow\mathbb{A}_{\mathbb{R}}^{1}$
of $\mathbb{R}^{2}$ such that $\alpha\circ f:\mathbb{A}_{\mathbb{R}}^{1}\rightarrow S$ is a closed embedding as the support 
of a fiber of $\pi$, say $\pi^{-1}(p)_{\mathrm{red}}$ for some point $p\in\mathbb{A}_{\mathbb{R}}^{1}(\mathbb{R})$. So
$\kappa_{\mathbb{R}}(\mathbb{P}_{\mathbb{R}}^{2}\setminus(C\cup L_{\infty}))=\kappa_{\mathbb{R}}(\mathbb{A}_{\mathbb{R}}^{2}\setminus C)=\kappa_{\mathbb{R}}(S\setminus \pi^{-1}(p))$ by invariance of the real Kodaira dimension under birational diffeomorphism.
Furthermore, since $\pi$ restricts to an $\mathbb{A}^{1}$-fibration on $S\setminus\pi^{-1}(p)$,
we have $\kappa_{\mathbb{R}}(\mathbb{A}_{\mathbb{R}}^{2}\setminus C)=\kappa_{\mathbb{R}}(S\setminus\pi^{-1}(p))\leq\kappa(S\setminus\pi^{-1}(p))=-\infty$. 
\end{proof}

\begin{question}
Is every rational smooth embedding $f:\mathbb{A}_{\mathbb{R}}^{1}\dashrightarrow\mathbb{A}_{\mathbb{R}}^{2}$
whose associated projective curve $C$ satisfies $\kappa_{\mathbb{R}}(\mathbb{P}_{\mathbb{R}}^{2}\setminus(C\cup L_{\infty}))=-\infty$
biddable ?
\end{question}

\subsection{\label{subsec:Bidd-rectif}Rectifiability of biddable lines: a sufficient criterion}

Recall that a rectifiable rational smooth embedding
$f:\mathbb{A}_{\mathbb{R}}^{1}\dashrightarrow\mathbb{A}_{\mathbb{R}}^{2}$
is in particular biddable. In this section, we establish a sufficient criterion for a biddable rational smooth embedding to be rectifiable. 

Let $f:\mathbb{A}_{\mathbb{R}}^{1}\dashrightarrow\mathbb{A}_{\mathbb{R}}^{2}$
be a biddable rational smooth embedding
with associated projective curve $C=\overline{f}(\mathbb{P}_{\mathbb{R}}^{1})\subset\mathbb{P}_{\mathbb{R}}^{2}$.
By Definition \ref{def:Rectif}, there exists a birational diffeomorphism
$\alpha:\mathbb{A}_{\mathbb{R}}^{2}\dashrightarrow S$ onto an $\mathbb{A}^{1}$-fibered
algebraic model $\pi:S\rightarrow\mathbb{A}_{\mathbb{R}}^{1}$ of
$\mathbb{R}^{2}$ such that the composition $\alpha\circ f:\mathbb{A}_{\mathbb{R}}^{1}\rightarrow S$
is a closed embedding of $\mathbb{A}_{\mathbb{R}}^{1}$ as the support of a fiber of $\pi$. 
Letting $(V,B)$ be a smooth
$\mathbb{P}^{1}$-fibered completion of $\pi:S\rightarrow\mathbb{A}_{\mathbb{R}}^{1}$
as in Lemma \ref{lem:A1Fibmodel-completion}, the composition $\overline{\pi}\circ\alpha:\mathbb{P}_{\mathbb{R}}^{2}\dashrightarrow\mathbb{P}_{\mathbb{R}}^{1}$ is a pencil of rational curves defined by a one-dimensional linear
system without fixed component $\mathcal{M}=(\overline{\pi}\circ\alpha)_{*}^{-1}|\mathcal{O}_{\mathbb{P}_{\mathbb{R}}^{1}}(1)|\subset|\mathcal{O}_{\mathbb{P}_{\mathbb{R}}^{2}}(d)|$ for some $d\geq1$, that has $C$ as an irreducible component of one
of its members. Every real member $M$ of $\mathcal{M}$ can be written as $M=M_{\mathbb{R}}+M'$ where $M_{\mathbb{R}}$ is an effective divisor whose support is equal to the union of all irreducible components of $M$ which are defined over $\mathbb{R}$ and have infinite real loci and where $M'$ is 
an effective divisor defined over $\mathbb{R}$ whose support does not contain any irreducible component of $M$ with infinite real locus.

\begin{lem} With the above notation, the following hold:

a) The linear system $\mathcal{M}$ has a real member $M_\infty$ such that the support of $M_{\infty,\mathbb{R}}$ is equal to $L_{\infty}$ 

b) For every real member $M\neq M_\infty$ of $\mathcal{M}$, the divisor $M_{\mathbb{R}}$ is an irreducible rational curve.
\end{lem}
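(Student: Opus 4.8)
The plan is to realize the members of $\mathcal{M}$ as images of the fibers of $\overline{\pi}$ under the birational map $\Phi:\mathbb{P}_\mathbb{R}^2\dashrightarrow V$ extending $\alpha$, and to read off the structure of their real parts from the fiber decomposition of Lemma~\ref{lem:A1Fibmodel-completion}. Concretely, I would fix a common log-resolution $\sigma:W\to\mathbb{P}_\mathbb{R}^2$, $\tau:W\to V$ of $\Phi$ so that $\rho:=\overline{\pi}\circ\tau:W\to\mathbb{P}_\mathbb{R}^1$ is a morphism, and note that the member of $\mathcal{M}$ over $q$ equals $M_q=\sigma_*\rho^{-1}(q)$, while $\overline{\pi}\circ\alpha$ restricts on $\mathbb{A}_\mathbb{R}^2$ to the rational map $g=\pi\circ\alpha:\mathbb{A}_\mathbb{R}^2\dashrightarrow\mathbb{A}_\mathbb{R}^1$. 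The one structural input I would isolate at the outset is that, since $\alpha$ is a birational diffeomorphism, it is biregular along the real loci and its indeterminacy locus in $\mathbb{A}_\mathbb{R}^2$ consists of non-real points; in particular no real curve with infinite real locus contained in $\mathbb{A}_\mathbb{R}^2$ is contracted by $\alpha$ or $\Phi^{-1}$, nor can such a curve arise as the $\sigma$-image of a $\tau$-exceptional curve.

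For part a), I would take $M_\infty$ to be the member over $\infty=\mathbb{P}_\mathbb{R}^1\setminus\mathbb{A}_\mathbb{R}^1$. Because $g$ maps its domain in $\mathbb{A}_\mathbb{R}^2$ into $\mathbb{A}_\mathbb{R}^1$, which does not contain $\infty$, the preimage $(\overline{\pi}\circ\alpha)^{-1}(\infty)$ meets $\mathbb{A}_\mathbb{R}^2$ only in the finitely many indeterminacy points of $g$. Hence $M_\infty$ has no one-dimensional part in $\mathbb{A}_\mathbb{R}^2$, so every irreducible component of $M_\infty$ is contained in $L_\infty$; as $L_\infty$ is a line this forces $M_\infty=d\,L_\infty$, and since $L_\infty$ has infinite real locus the support of $M_{\infty,\mathbb{R}}$ equals $L_\infty$.

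For part b), I would fix a real $q\neq\infty$ and use Lemma~\ref{lem:A1Fibmodel-completion} to write the support of $\overline{\pi}^{-1}(q)$ as the union of $G_q\subset B$ with the closure of the affine fiber $\pi^{-1}(q)$. By Theorem~\ref{thm:-A1Fib-Model} this fiber is geometrically irreducible and isomorphic to $\mathbb{A}_\mathbb{R}^1$ with its reduced structure, so its closure is an irreducible rational curve whose proper transform $D_q=\Phi^{-1}_*\,\overline{\pi^{-1}(q)}\subset\mathbb{P}_\mathbb{R}^2$ is again irreducible, rational and defined over $\mathbb{R}$; it carries $\Phi^{-1}(\pi^{-1}(q)(\mathbb{R}))\cong\mathbb{R}$ inside $\mathbb{A}_\mathbb{R}^2$, hence has infinite real locus and differs from $L_\infty$. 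I would then argue that $D_q$ and $L_\infty$ are the only possible real components of $M_q$ with infinite real locus: the components coming from $G_q$ lie in the boundary $B$ and so map under $\Phi^{-1}$ either onto $L_\infty$ or to a point, while the remaining components of $M_q$ arise from $\tau$-exceptional curves and are, by the structural input above, non-real or of finite real locus inside $\mathbb{A}_\mathbb{R}^2$.

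It then remains to exclude $L_\infty$, and here I would invoke part a): since $M_\infty=d\,L_\infty$, the curve $L_\infty$ lies in the fiber of $\overline{\pi}\circ\alpha$ over $\infty$, i.e. $\overline{\pi}\circ\alpha$ contracts $L_\infty$ to the single point $\infty$; being irreducible, $L_\infty$ can therefore be a component of $M_q$ only for $q=\infty$. Consequently, for every real $q\neq\infty$ the divisor $M_{q,\mathbb{R}}$ is supported on the single irreducible rational curve $D_q$ (appearing with the multiplicity of the fiber $\pi^{-1}(q)$), which is the assertion. The main obstacle is precisely the clustering step of the previous paragraph: one must rule out that any curve produced by resolving the non-real indeterminacy of $\alpha$, or any component of $G_q$, sneaks in as an unexpected real component of infinite real locus. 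It is exactly the birational-diffeomorphism hypothesis—forcing all exceptional behaviour off the real locus except along the contracted line $L_\infty$—that makes this bookkeeping go through.
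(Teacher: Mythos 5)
Your argument for part a) contains a genuine error, and it is the step on which everything else rests. You claim that, since $g=\pi\circ\alpha$ maps its domain into $\mathbb{A}_{\mathbb{R}}^{1}$, the preimage of $\infty$ meets $\mathbb{A}_{\mathbb{R}}^{2}$ in only finitely many points, whence $M_{\infty}=d\,L_{\infty}$. But $\mathbb{R}$-regularity of $\alpha$ only guarantees that $\mathbb{A}_{\mathbb{R}}^{2}(\mathbb{R})$ lies in the domain of definition of $\alpha$; the complement of that domain is a Zariski-closed set containing no real point of $\mathbb{A}_{\mathbb{R}}^{2}$, but it may perfectly well contain whole curves, namely non-real ones or real ones with finite real locus. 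Such curves are contracted by the extension $\Phi$ of $\alpha$, possibly to points of $F_{\infty}$, and then they appear as one-dimensional components of $M_{\infty}$ meeting $\mathbb{A}_{\mathbb{R}}^{2}$. The paper's own Example \ref{exa:cubic-continued} is a counterexample to your intermediate claim: for $\alpha(x,y)=(x+y,\,-x+\tfrac{x+y}{(x+y)^{2}+1})$ and $\pi=\mathrm{pr}_{2}$ (which is exactly the situation of the Lemma, applied to the cubic embedding), the pencil $\mathcal{M}$ is spanned by the cubics $Z^{2}Y-X(X+Y)^{2}$ and $Z((X+Y)^{2}+Z^{2})$, so its member over $\infty$ is $M_{\infty}=L_{\infty}+L+\overline{L}$, where $L,\overline{L}$ are the non-real conjugate lines $X+Y=\pm iZ$; these meet $\mathbb{A}_{\mathbb{R}}^{2}$ and are contracted by $\Phi$ to $[0:1:0]\in F_{\infty}$. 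Thus $M_{\infty}\neq3L_{\infty}$, and the finiteness claim on which you base a) is false.

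What survives of your reasoning is only one inclusion: any component of $M_{\infty}$ defined over $\mathbb{R}$ with infinite real locus must equal $L_{\infty}$, because otherwise it would have infinitely many real points in $\mathbb{A}_{\mathbb{R}}^{2}(\mathbb{R})\subset\mathrm{dom}(\alpha)$, which $\alpha$ sends into $S$ and hence not over $\infty$. This gives $\mathrm{Supp}(M_{\infty,\mathbb{R}})\subseteq L_{\infty}$, but not the statement of a): you still have to prove that $L_{\infty}$ actually \emph{is} a component of $M_{\infty}$, i.e.\ that $\Phi$ maps $L_{\infty}$ into $F_{\infty}$ rather than, say, onto the section $H$ or into a fiber over a finite real point. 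This missing half is precisely what your proof of b) invokes (``$L_{\infty}$ can be a component of $M_{q}$ only for $q=\infty$''), so the gap propagates: without it you cannot exclude that $M_{q,\mathbb{R}}$ contains $L_{\infty}$ in addition to $D_{q}$ for some real $q\neq\infty$. Note that the paper sidesteps your pitfall by arguing with real loci rather than with scheme-theoretic fibers: since $\overline{\pi}^{-1}(\infty)=F_{\infty}\subset B$ and $\alpha$ restricts to a diffeomorphism $\mathbb{A}_{\mathbb{R}}^{2}(\mathbb{R})\to S(\mathbb{R})$ with $\pi(S(\mathbb{R}))\subset\mathbb{R}$, the real locus of the member corresponding to $F_{\infty}$ is identified with $L_{\infty}(\mathbb{R})$, and b) then follows from Theorem \ref{thm:-A1Fib-Model}: each real fiber of $\pi$ has a unique geometrically irreducible component, isomorphic to $\mathbb{A}_{\mathbb{R}}^{1}$, and every geometrically irreducible component of a real member meeting $\mathbb{A}_{\mathbb{R}}^{2}(\mathbb{R})$ in infinitely many points is carried by $\alpha$ onto such a fiber component. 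A smaller instance of the same oversight occurs in your clustering step for b): the assertion that components of $G_{q}$ map under $\Phi^{-1}$ ``onto $L_{\infty}$ or to a point'' is, for the same reason, only justified for components with infinite real locus; that imprecision happens to be harmless, since only such components can contribute to $M_{q,\mathbb{R}}$, but the reliance of b) on the unproved half of a) is not.
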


\begin{proof}
Let $(V,B)$ be a smooth completion of $S$ on which $\pi$ extends
to a $\mathbb{P}^{1}$-fibration $\overline{\pi}:V\rightarrow\mathbb{P}_{\mathbb{R}}^{1}$
as in Lemma \ref{lem:A1Fibmodel-completion}. Since $\overline{\pi}$
restricts to an $\mathbb{A}^{1}$-fibration $\pi:S\rightarrow\mathbb{A}_{\mathbb{R}}^{1}$,
the real locus of the real member of $\mathcal{M}$ corresponding
to $\overline{\pi}^{-1}(\mathbb{P}_{\mathbb{R}}^{1}\setminus\mathbb{A}_{\mathbb{R}}^{1})$
is equal to $L_{\infty}$. This proves a). By Theorem \ref{thm:-A1Fib-Model}, every real fibers of $\pi$ contains a unique 
 geometrically irreducible component, which is isomorphic to
$\mathbb{A}_{\mathbb{R}}^{1}$ when equipped with its reduced structure.
Since every geometrically irreducible component of a real member of
$\mathcal{M}$ intersecting $\mathbb{A}_{\mathbb{R}}^{2}$ becomes
a geometrically irreducible component of a real fiber of $\pi$, assertion b) follows. 
\end{proof}

\begin{prop}
\label{thm:Main-rectif-criterion} Let $f:\mathbb{A}_{\mathbb{R}}^{1}\dashrightarrow\mathbb{A}_{\mathbb{R}}^{2}$
be a biddable rational smooth embedding with associated projective curve $C=\overline{f}(\mathbb{P}_{\mathbb{R}}^{1})\subset\mathbb{P}_{\mathbb{R}}^{2}$
and let $\mathcal{M}\subset|\mathcal{O}_{\mathbb{P}_{\mathbb{R}}^{2}}(d)|$ be the linear system associated to a birational diffeomorphism $\alpha:\mathbb{A}_{\mathbb{R}}^{2}\dashrightarrow S$ onto an $\mathbb{A}^{1}$-fibered algebraic model $\pi:S\rightarrow\mathbb{A}_{\mathbb{R}}^{1}$
of $\mathbb{R}^{2}$ such that $\alpha\circ f:\mathbb{A}_{\mathbb{R}}^{1}\rightarrow S$ is a closed immersion as the support of a fiber $F$ of $\pi$.
 Assume that one of the following two conditions is satisfied:

a) For every real member $M \neq M_\infty$ of $\mathcal{M}$, the curve $M_{\mathbb{R}}$ is irreducible and reduced. 

b) There exists a unique real member $M_0\neq M_\infty$ of $\mathcal{M}$ such that $M_{0,\mathbb{R}}$ is non-reduced and $C$ is the support of $M_{0,\mathbb{R}}$. 

Then $f:\mathbb{A}_{\mathbb{R}}^{1}\dashrightarrow\mathbb{A}_{\mathbb{R}}^{2}$ is rectifiable. 
\end{prop}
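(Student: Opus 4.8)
The plan is to reduce the statement to the fibered rectification criterion of Theorem~\ref{thm:rectif-criterion} by first translating the hypotheses on the pencil $\mathcal{M}$ into statements about the multiplicities of the real fibers of $\pi$, and then tracking the image of $C$ through the resulting birational diffeomorphism.

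First I would set up the dictionary between the real members $M\neq M_\infty$ of $\mathcal{M}$ and the real fibers of $\pi$. Since $\alpha:\mathbb{A}_{\mathbb{R}}^{2}\dashrightarrow S$ restricts to an isomorphism over a dense open subset of the base $\mathbb{A}_{\mathbb{R}}^{1}$, the proper transform under $\alpha$ of the irreducible rational curve $M_{\mathbb{R}}$ (irreducibility being guaranteed by the preceding lemma) is exactly the unique geometrically irreducible component $F_p$ of the fiber $\pi^{-1}(p)$ provided by Theorem~\ref{thm:-A1Fib-Model}, where $p\in\mathbb{A}_{\mathbb{R}}^{1}(\mathbb{R})$ is the point to which $M$ corresponds. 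Comparing multiplicities along this component, the multiplicity of $M_{\mathbb{R}}$ along its support equals the multiplicity $m$ of the fiber $\pi^{-1}(p)=m\,F_p$; in particular $M_{\mathbb{R}}$ is reduced if and only if $\pi^{-1}(p)$ is a reduced (hence smooth) fiber. With this dictionary, hypothesis (a) says that every real fiber of $\pi$ over $\mathbb{A}_{\mathbb{R}}^{1}(\mathbb{R})$ is reduced, while hypothesis (b) says that all real fibers are reduced except the single fiber $F=\pi^{-1}(p_0)_{\mathrm{red}}$ carrying the image $F=\alpha(C)$, which has multiplicity $m\geq 2$. In either case all but at most one real fiber of $\pi$ is reduced, so Theorem~\ref{thm:rectif-criterion} applies and $S$ is birationally diffeomorphic to $\mathbb{A}_{\mathbb{R}}^{2}$.

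The essential point, for which I would go back to the construction underlying Theorem~\ref{thm:rectif-criterion}, is that the birational diffeomorphism $\beta:S\dashrightarrow\mathbb{A}_{\mathbb{R}}^{2}$ it produces can be chosen to be \emph{fibered}: working on the $\mathbb{P}^1$-fibered completion $(V,B)$ of Lemma~\ref{lem:A1Fibmodel-completion}, it transforms $\pi:S\rightarrow\mathbb{A}_{\mathbb{R}}^{1}$ into the trivial bundle $\mathrm{pr}_{2}:\mathbb{A}_{\mathbb{R}}^{2}\rightarrow\mathbb{A}_{\mathbb{R}}^{1}$, up to an automorphism of the base, via fiberwise modifications supported over the finitely many non-reduced fibers. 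Such modifications successively reduce the multiplicity of the exceptional fiber while leaving the other fibers untouched as sets, and therefore send every reduced fiber, as well as the reduced support $\pi^{-1}(p)_{\mathrm{red}}$ of the one fiber whose non-reducedness is allowed, onto a genuine fiber of the trivial bundle, i.e.\ onto a line. In case (a) the fiber $F$ is already reduced and $\beta$ carries it to a line $\ell$; in case (b) the one permitted non-reduced fiber can be prescribed to be $F$ itself, so that its reduced support $F=\alpha(C)$ is again sent by $\beta$ onto a line $\ell$. In both cases $\beta\circ\alpha:\mathbb{A}_{\mathbb{R}}^{2}\dashrightarrow\mathbb{A}_{\mathbb{R}}^{2}$ is a birational diffeomorphism mapping $C\cap\mathbb{A}_{\mathbb{R}}^{2}$ onto $\ell$, and the composition $\beta\circ\alpha\circ f:\mathbb{A}_{\mathbb{R}}^{1}\rightarrow\mathbb{A}_{\mathbb{R}}^{2}$ is a regular closed embedding onto $\ell\simeq\mathbb{A}_{\mathbb{R}}^{1}$, hence an affine parametrization of $\ell$. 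Post-composing with an affine automorphism of $\mathbb{A}_{\mathbb{R}}^{2}$ and precomposing with an automorphism of $\mathbb{A}_{\mathbb{R}}^{1}$ identifies it with $j_{\mathrm{lin}}$, so that $f$ is rectifiable.

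The main obstacle I anticipate is this second step. Theorem~\ref{thm:rectif-criterion} is stated only as an existence result for a birational diffeomorphism onto $\mathbb{A}_{\mathbb{R}}^{2}$, and one must genuinely reopen its proof to verify two things at once: that the diffeomorphism respects the $\mathbb{A}^1$-fibration, and that in case (b) the unique fiber whose non-reducedness it is allowed to absorb may be taken to be the distinguished fiber $F=\alpha(C)$. Guaranteeing that $F$, rather than some other component, is the curve that ends up as a line of the trivial bundle is precisely what upgrades the conclusion from the mere birational diffeomorphism $S\cong\mathbb{A}_{\mathbb{R}}^{2}$ to the rectifiability of $f$; checking the multiplicity dictionary of the first paragraph is, by contrast, routine.
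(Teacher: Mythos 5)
Your opening dictionary between real members of $\mathcal{M}$ and real fibers of $\pi$ is exactly the paper's first step, and your treatment of case (b) is in substance the paper's own: the authors likewise invoke Theorem \ref{thm:rectif-criterion} and then assert that ``a careful tracing'' of the specific birational diffeomorphism $\beta:S\dashrightarrow\mathbb{A}_{\mathbb{R}}^{2}$ constructed in \cite[Section 4.2]{DuMa17} shows that $\beta$ carries the inclusion of $\pi^{-1}(0)_{\mathrm{red}}$ to a linear regular closed embedding. So the obstacle you single out in your last paragraph is precisely where the paper also leans on an external verification rather than a self-contained argument; note only that no ``prescription'' of the exceptional fiber is needed in case (b), since hypothesis (b) already forces the unique non-reduced real fiber to be the one supported on $\alpha(C)$. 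Where you genuinely diverge from the paper is case (a). You push it through the same fibered-modification claim, which makes even the all-fibers-reduced case depend on reopening \cite{DuMa17}. The paper instead observes that condition (a), combined with Theorem \ref{thm:-A1Fib-Model} (only real fibers of $\pi$ can be degenerate), implies that \emph{every} scheme-theoretic fiber of $\pi$ is isomorphic to $\mathbb{A}^{1}$ over its residue field, and then quotes Kambayashi--Miyanishi \cite{KaMi78} to conclude that $\pi$ is isomorphic over $\mathbb{A}_{\mathbb{R}}^{1}$ to the trivial bundle $\mathrm{pr}_{2}:\mathbb{A}_{\mathbb{R}}^{1}\times\mathbb{A}_{\mathbb{R}}^{1}\rightarrow\mathbb{A}_{\mathbb{R}}^{1}$; thus $S$ is \emph{biregularly} isomorphic to $\mathbb{A}_{\mathbb{R}}^{2}$ and $\alpha\circ f$ is literally a fiber of $\mathrm{pr}_{2}$, with no appeal to the internals of Theorem \ref{thm:rectif-criterion}. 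Your route for (a) is not wrong, but the paper's is both stronger (a biregular rather than birational trivialization) and cheaper, concentrating the ``careful tracing'' burden entirely on case (b), where it is unavoidable.
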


\begin{proof}
For every real member $M$ of $\mathcal{M}$, the curve $M_{\mathbb{R}}$ is a reduced component of $M$ if and only if $\alpha(M_{\mathbb{R}}\cap \mathbb{A}^2_{\mathbb{R}})$ is a reduced component of a real fiber of $\pi$. So by Lemma \ref{lem:A1Fibmodel-completion}, condition a) is equivalent to the property that all scheme theoretic fibers of $\pi:S\rightarrow\mathbb{A}_{\mathbb{R}}^{1}$ are isomorphic to $\mathbb{A}^{1}$ over the corresponding residue fields. This implies by \cite{KaMi78} that $\pi:S\rightarrow\mathbb{A}_{\mathbb{R}}^{1}$ is  isomorphic over $\mathbb{A}_{\mathbb{R}}^{1}$ to the trivial $\mathbb{A}^{1}$-bundle $\mathrm{pr}_{2}:\mathbb{A}_{\mathbb{R}}^{1}\times\mathbb{A}_{\mathbb{R}}^{1}\rightarrow\mathbb{A}_{\mathbb{R}}^{1}$. It follows that
$\alpha \circ f:\mathbb{A}_{\mathbb{R}}^{1}\rightarrow S\simeq \mathbb{A}^2_{\mathbb{R}}$ is  
a closed embedding of $\mathbb{A}_{\mathbb{R}}^{1}$ as a real fiber of $\mathrm{pr}_2$, hence that $f$ is rectifiable. 

Now suppose that condition b) if satisfied. Then $\pi:S\rightarrow\mathbb{A}_{\mathbb{R}}^{1}$ is an $\mathbb{A}^1$-fibration with a unique degenerate fiber, say $\pi^{-1}(0)$, and  $\alpha \circ f:\mathbb{A}_{\mathbb{R}}^{1}\rightarrow S$ is equal to the inclusion of $(\pi^{-1}(0))_{\mathrm{red}}\simeq \mathbb{A}^1_{\mathbb{R}}$ in $S$. By Theorem \ref{thm:rectif-criterion}, $S$ is birationally diffeomorphic to $\mathbb{A}_{\mathbb{R}}^{2}$. In fact, a careful tracing of the construction of a specific birational diffeomorphism $\beta:S\dashrightarrow\mathbb{A}_{\mathbb{R}}^{2}$ described
in \cite[Section 4.2]{DuMa17} shows that the composition of the inclusion $i_{0}:\mathbb{A}_{\mathbb{R}}^{1}\hookrightarrow S$
of $\pi^{-1}(0)_{\mathrm{red}}$ with $\beta$ is a linear regular closed embedding of $\mathbb{A}_{\mathbb{R}}^{1}$ into $\mathbb{A}_{\mathbb{R}}^{2}$. This implies that $\beta \circ \alpha \circ f:\mathbb{A}_{\mathbb{R}}^{1}\dashrightarrow \mathbb{A}_{\mathbb{R}}^{2}$ is
a linear regular closed embedding, hence that  $f$ is rectifiable.
\end{proof}

Let $S_{m}$ be a smooth affine surface with an $\mathbb{A}^{1}$-fibration
$\pi_{m}:S_{m}\rightarrow\mathbb{A}_{\mathbb{R}}^{1}$ defined over
$\mathbb{R}$ and admitting a unique degenerate real fiber, isomorphic
to $\mathbb{A}_{\mathbb{R}}^{1}$, of odd multiplicity $m\geq3$ (for
instance, take $S$ as in Example \ref{exa:FakePlanes}). By Theorem \ref{thm:rectif-criterion},
$S_{m}$ is an algebraic model of $\mathbb{R}^{2}$ birationally diffeomorphic
to $\mathbb{A}_{\mathbb{R}}^{2}$ but non-biregularly isomorphic to
it. Let $\alpha^{-1}:S_{m}\stackrel{\sim}{\dashrightarrow}\mathbb{A}_{\mathbb{R}}^{2}$
be such a birational diffeomorphism and let $i:\mathbb{A}_{\mathbb{R}}^{1}\hookrightarrow S_{m}$
be the inclusion of a general smooth real fiber of $\pi$. The composition $f_{m}=\alpha^{-1}\circ i:\mathbb{A}_{\mathbb{R}}^{1}\dashrightarrow\mathbb{A}_{\mathbb{R}}^{2}$ is then a biddable rational smooth embedding to which the criterion of Proposition \ref{thm:Main-rectif-criterion} does not apply. We do not know whether such rational smooth embedding $f_{m}:\mathbb{A}_{\mathbb{R}}^{1}\dashrightarrow\mathbb{A}_{\mathbb{R}}^{2}$ are rectifiable or not. 

\begin{question}
Does there exist biddable but non-rectifiable rational smooth embeddings $f:\mathbb{A}_{\mathbb{R}}^{1}\dashrightarrow\mathbb{A}_{\mathbb{R}}^{2}$? In particular, are the biddable rational smooth embedding $f_{m}:\mathbb{A}_{\mathbb{R}}^{1}\dashrightarrow\mathbb{A}_{\mathbb{R}}^{2}$ above rectifiable ?
\end{question}

\section{Families of non-biddable affine lines }

In this section, we exhibit examples of non-biddable,
hence in particular non-rectifiable, rational smooth embeddings $f:\mathbb{A}_{\mathbb{R}}^{1}\dashrightarrow\mathbb{A}_{\mathbb{R}}^{2}$
whose associated projective curves have any degree larger than or
equal to $5$. 
\begin{thm}
\label{thm:NonFib-Main}For every integer $d\geq5$ there exists a
non-biddable rational smooth embedding
$f\colon\mathbb{A}_{\mathbb{R}}^{1}\dashrightarrow\mathbb{A}_{\mathbb{R}}^{2}$
whose associated projective curve $C_{d}\subset\mathbb{P}_{\mathbb{R}}^{2}$
is a rational nodal curve of degree $d$. 
\end{thm}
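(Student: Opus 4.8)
The plan is to construct, for each $d\geq 5$, an explicit rational nodal curve $C_d\subset\mathbb{P}_{\mathbb{R}}^2$ of degree $d$ whose associated morphism $f$ is a rational smooth embedding (verified via Lemma~\ref{lem:RatEmbed-from-projcurve}), and then to rule out biddability by applying the numerical obstruction of Proposition~\ref{prop:RealKodObst-fiber}, namely by showing $\kappa_{\mathbb{R}}(\mathbb{P}_{\mathbb{R}}^2\setminus(C_d\cup L_\infty))\neq-\infty$. The construction should be arranged so that $C_d$ meets $L_\infty$ in a single real point of the normalization (so that $U(\mathbb{R})=\mathbb{R}$ and criterion (c) of Lemma~\ref{lem:RatEmbed-from-projcurve} holds) while all the remaining intersection points with $L_\infty$ and all the nodes are either non-real or arranged so that $C_d$ stays smooth at every real affine point. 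A natural source for such curves is the help acknowledged from Itenberg (cf.\ the reference to Proposition~\ref{prop:Curves-with-nonreal-nodes}): take a rational nodal curve all of whose nodes are pairs of non-real complex conjugate points, so that $C_d(\mathbb{R})$ is a smooth embedded copy of $\mathbb{R}$ in $\mathbb{A}_{\mathbb{R}}^2$.

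The key computational step is to control the real Kodaira dimension of the complement. First I would pass to the minimal log-resolution $\tau\colon V\to\mathbb{P}_{\mathbb{R}}^2$ of the pair $(\mathbb{P}_{\mathbb{R}}^2, C_d\cup L_\infty)$ and form an SNC completion $(V,B)$ with no imaginary loops, blowing up the non-real singular points as in the discussion preceding Definition~\ref{def:rKod}. The crucial point is that, because the nodes of $C_d$ are non-real, they do \emph{not} contribute to $B_{\mathbb{R}}$: the real boundary $B_{\mathbb{R}}$ consists essentially of the proper transform of $C_d$ (a rational curve of self-intersection governed by $d$ and the number of nodes) together with the proper transform of $L_\infty$ and the real components created over the single real point at infinity. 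I would then compute the Iitaka dimension $\kappa(V,K_V+B_{\mathbb{R}})$ and show it is nonnegative for $d\geq 5$; the genus/degree bookkeeping for a nodal rational curve gives, via adjunction, that the proper transform of $C_d$ has sufficiently large self-intersection relative to $K_V$ to force effectivity of a positive multiple of $K_V+B_{\mathbb{R}}$. Concretely, a degree-$d$ rational nodal plane curve has $\binom{d-1}{2}$ nodes, and once these are non-real the real log-canonical class picks up positivity precisely in the regime $d\geq 5$, matching the threshold in the statement.

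The main obstacle I expect is twofold. First, producing an \emph{actual} rational nodal curve of each degree $d\geq 5$ with the required reality configuration (one real branch point at infinity, all nodes non-real, smooth along the real affine locus) is a genuine existence problem in real algebraic geometry rather than a formal manipulation; this is presumably where Itenberg's input enters, likely through a patchworking or explicit parametrization argument, and I would isolate it as a separate lemma (the analogue of Proposition~\ref{prop:Curves-with-nonreal-nodes}) rather than attempt it inline. Second, the real Kodaira dimension computation is subtle because $\kappa_{\mathbb{R}}$ uses only $B_{\mathbb{R}}$, not all of $B$, so I must be careful that the non-real nodes and non-real points at infinity, after resolution, genuinely drop out of the real boundary divisor; a naive computation using the full boundary $B$ would give the ordinary Kodaira dimension and would not suffice, since biddability is obstructed only through the \emph{real} invariant. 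The strategy is therefore to show that the contribution of $C_d$ alone to $K_V+B_{\mathbb{R}}$ already forces $\kappa_{\mathbb{R}}\geq 0$.

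Finally, I would assemble the pieces: by the existence lemma, $f$ is a rational smooth embedding; by the real Kodaira dimension computation, $\kappa_{\mathbb{R}}(\mathbb{P}_{\mathbb{R}}^2\setminus(C_d\cup L_\infty))\geq 0>-\infty$; and by the contrapositive of Proposition~\ref{prop:RealKodObst-fiber}, $f$ cannot be biddable, hence \emph{a fortiori} not rectifiable by Remark~\ref{rem:StraightFiber}. This yields the claimed family for every $d\geq 5$, and the degree-$d$ rational nodal curve $C_d$ is exactly the associated projective curve, as required.
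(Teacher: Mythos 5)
Your overall architecture coincides with the paper's: construct the curves $C_d$ (this is Proposition \ref{prop:Curves-with-nonreal-nodes}), prove $\kappa_{\mathbb{R}}(\mathbb{P}^2_{\mathbb{R}}\setminus(C_d\cup L_\infty))\geq 0$ (Lemma \ref{lem:RealKod-comp}), and conclude by the contrapositive of Proposition \ref{prop:RealKodObst-fiber}. However, two of your steps have genuine gaps. First, the existence statement you propose to isolate --- a degree-$d$ rational nodal curve \emph{all} of whose nodes are non-real conjugate pairs --- is impossible for half of the degrees: a rational curve of degree $d$ with only nodes has exactly $(d-1)(d-2)/2$ of them, and this number is odd precisely when $d\equiv 0,3\pmod 4$, so complex conjugation must fix at least one node. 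The paper's Proposition \ref{prop:Curves-with-nonreal-nodes} therefore splits into two cases and, when $d\equiv 0,3\pmod 4$, allows exactly one real node $p_s$, placed on $L_\infty$ and having non-real complex conjugate tangents, so that its preimage in the normalization is a non-real conjugate pair and Lemma \ref{lem:RatEmbed-from-projcurve} still applies. Your construction as described covers only $d\equiv 1,2\pmod 4$.

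Second, your claim that ``the contribution of $C_d$ alone to $K_V+B_{\mathbb{R}}$ already forces $\kappa_{\mathbb{R}}\geq 0$'' fails exactly at the boundary case $d=5$, and your self-intersection heuristic is inverted. For $d\geq 6$ the paper's argument is that the proper transform of $C_{\mathbb{C}}$ in a resolution has self-intersection $-d^2+6d-4\leq -4$, hence $C_{\mathbb{C}}$ cannot be contracted by a birational automorphism of $\mathbb{P}^2_{\mathbb{C}}$, hence by Coolidge's criterion $\kappa(X,K_X+\tilde{C}_{\mathbb{C}})\geq 0$; so what is used is that the self-intersection is very \emph{negative}, not ``sufficiently large.'' For $d=5$ this breaks down completely: the proper transform of a nodal quintic with its six nodes has self-intersection $+1$, and the quintic \emph{can} be mapped to a line by a birational automorphism of $\mathbb{P}^2_{\mathbb{R}}$ (even by a birational diffeomorphism, see Remark \ref{rem:DiffBir-Quintic}), so Coolidge gives $\kappa(X,K_X+\tilde{C}_{\mathbb{C}})=-\infty$ and the curve alone obstructs nothing. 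The paper must instead exploit the full real boundary $B_{\mathbb{R}}$ --- including $L_\infty$ and the real exceptional curve $C_0$ over the real point $p\in C_5\cap L_\infty$ --- and establishes $\kappa_{\mathbb{R}}\geq 0$ via the explicit $\mathbb{Q}$-linear equivalence $2K_V+B_{\mathbb{R}}\sim_{\mathbb{Q}}C_0'$, an effective divisor. Without this separate $d=5$ computation (or an equivalent appeal to Iitaka's classification of pairs of plane curves), your argument would prove the theorem only for $d\geq 6$, and only for $d\equiv 1,2\pmod 4$ at that.
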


The proof given below proceeds in two steps. We first construct in
Proposition \ref{prop:Curves-with-nonreal-nodes} real rational curves
$C_{d}\subset\mathbb{P}_{\mathbb{R}}^{2}$ having only ordinary nodes
as singularities for which the inclusion $C_{d}\cap\mathbb{A}_{\mathbb{R}}^{2}\hookrightarrow\mathbb{A}_{\mathbb{R}}^{2}$
defines a rational smooth embedding $f:\mathbb{A}_{\mathbb{R}}^{1}\dashrightarrow\mathbb{A}_{\mathbb{R}}^{2}$.
We then show by direct computation in Lemma \ref{lem:RealKod-comp}
that the real Kodaira dimension $\kappa_{\mathbb{R}}(\mathbb{P}^{2}\setminus (C_{d}\cup L_{\infty}))$
(see $\S$ \ref{subsec:Real-Kodaira-dimension}) is nonnegative, which
implies by virtue of Proposition \ref{prop:RealKodObst-fiber} that
$f:\mathbb{A}_{\mathbb{R}}^{1}\dashrightarrow\mathbb{A}_{\mathbb{R}}^{2}$
is not biddable.
\begin{prop}
\label{prop:Curves-with-nonreal-nodes} For any integer $d\geq1$
there exists a singular real rational curve $C_{d}\subset\mathbb{P}_{\mathbb{R}}^{2}$
of degree $d$ with only ordinary nodes as singularities with the
following properties:

1) If $d\equiv1$ or $2$ modulo $4$, $Sing(C_{d})$ consists of
pairs of non-real complex conjugate ordinary nodes. Furthermore, the intersection of $C_d$ with $L_\infty$ contains a unique real point and is transversal at every other point. 

2) If $d\equiv0$ or $3$ modulo $4$, $Sing(C_{d})$ consists of
pairs of non-real complex conjugate ordinary nodes and a unique real
node $p_{s}$ with non-real complex conjugate tangents.
Furthermore, the intersection of $C_d$ with $L_\infty$ contains $p_{s}$ and a unique other real
point, and is transversal at every other point. 
\end{prop}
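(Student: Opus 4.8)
The plan is to construct the curves $C_d$ explicitly as images of carefully chosen morphisms $\overline{f}\colon\mathbb{P}^1_{\mathbb{R}}\to\mathbb{P}^2_{\mathbb{R}}$ and then to control their singularities by a combination of a direct construction for a base case together with a perturbation/degeneration argument, with the help of I. Itenberg as acknowledged in the introduction. The key numerical constraint to keep in mind is the genus formula: a rational plane curve of degree $d$ has arithmetic genus $\binom{d-1}{2}$, so a nodal rational curve must have exactly $\delta=\binom{d-1}{2}$ nodes. The parity condition on $d$ modulo $4$ governs whether $\delta=\binom{d-1}{2}$ is even or odd: it is even precisely when $d\equiv 1,2\pmod 4$ and odd when $d\equiv 0,3\pmod 4$. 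This is the structural reason behind the dichotomy in the statement: when $\delta$ is even one can hope to distribute all the nodes into complex-conjugate pairs, whereas when $\delta$ is odd one conjugate pair cannot account for the leftover node, forcing the existence of a single real node, which we then arrange to have non-real conjugate tangents.

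First I would handle the conditions at infinity. The requirement is that $C_d\cap L_\infty$ contain a unique real point (case 1) or exactly the real node $p_s$ plus one other real point (case 2), with transversality elsewhere. Since $\overline{f}^{-1}(C_d\cap L_\infty)$ must contain a unique real point of $\mathbb{P}^1_{\mathbb{R}}$ by Lemma~\ref{lem:RatEmbed-from-projcurve}, this is really a condition on the parametrization: I would arrange $\overline{f}$ so that the fiber over $L_\infty$ consists of one real point and $\mathbb{C}$-rational (complex conjugate) points only. Concretely, I would choose the defining polynomials of $\overline{f}=[P:Q:R]$ of degree $d$ so that $R$, the last coordinate cutting out $L_\infty=\{Z=0\}$, has a single real root and otherwise non-real roots, and so that the map is transversal to $L_\infty$ away from the prescribed point (and, in case 2, sends a conjugate pair of parameters to the single real node $p_s$ with the two branches having non-real conjugate tangent directions).

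The heart of the argument, and the main obstacle, is the existence of the prescribed configuration of nodes with the correct real structure: namely that all nodes come in non-real complex conjugate pairs, except in case 2 for the single real node $p_s$ whose two tangent lines are non-real and conjugate (so that, as in Example~\ref{ex:cubic}, $p_s$ is a solitary/isolated real point of the curve and contributes no real branch). This is a real enumerative/constructive problem: realizing a rational curve with a specified conjugation-invariant node scheme is governed by the real geometry of the Severi variety of rational nodal curves, and producing an actual real curve with all nodes \emph{imaginary} (no real node in case 1, exactly one real node with complex tangents in case 2) is delicate. I expect to establish this by starting from an explicit low-degree curve with the desired real node structure and then applying a real deformation argument within the Severi variety, or by an inductive construction that adds conjugate pairs of nodes one pair at a time while preserving the real structure at infinity; the transversality and genericity statements would follow from a Bertini-type argument over $\mathbb{R}$.

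Finally, I would verify property (b) of Lemma~\ref{lem:RatEmbed-from-projcurve}, i.e. that $C_d$ is smooth at every real point of $C_d\cap\mathbb{A}^2_{\mathbb{R}}$: this is automatic once all affine singular points are non-real conjugate nodes and the only real node $p_s$ (when present) lies on $L_\infty$, since then $C_d\cap\mathbb{A}^2_{\mathbb{R}}$ has no real singular point at all. Together with the uniqueness of the real point over $L_\infty$ arranged above, Lemma~\ref{lem:RatEmbed-from-projcurve} then immediately yields that the inclusion $C_d\cap\mathbb{A}^2_{\mathbb{R}}\hookrightarrow\mathbb{A}^2_{\mathbb{R}}$ represents a rational smooth embedding, completing the proof.
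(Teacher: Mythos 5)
Your structural observation about the parity of $\delta=\binom{d-1}{2}$ (even precisely when $d\equiv 1,2\pmod 4$) is correct and is indeed the reason behind the dichotomy in the statement, and your final reduction to Lemma~\ref{lem:RatEmbed-from-projcurve} is fine. However, the proposal has a genuine gap at exactly the point you yourself identify as ``the heart of the argument'': the existence of a real rational nodal curve of degree $d$ all of whose nodes are non-real and come in conjugate pairs (plus, in case 2, a single real node with non-real conjugate tangents). You defer this to ``a real deformation argument within the Severi variety'' or ``an inductive construction that adds conjugate pairs of nodes,'' but neither is carried out, and neither is routine: controlling which nodes of a real member of the Severi variety are real versus imaginary is precisely the delicate real-enumerative issue, and no Bertini-type statement over $\mathbb{R}$ gives it to you. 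As written, the proposal proves the easy reductions but not the existence statement itself.

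The paper resolves this by a concrete construction that your sketch does not contain. Let $k=\frac12(d-1)$ or $\frac12(d-2)$; take a general nodal rational \emph{complex} curve $D_k$ of degree $k$, so that $D_k$ has no real point when $k$ is even (case 1) and exactly one real point $p_s$ when $k$ is odd (case 2, a parity argument on the conjugation action on $D_k\cap\overline{D}_k$). Then $D_k\cup\overline{D}_k\cup E$, with $E$ a general real line ($d$ odd) or real conic ($d$ even), is a reducible real curve of degree $d$ whose nodes are all non-real conjugate pairs except possibly $p_s$, which automatically has non-real conjugate tangents since its two branches lie on $D_k$ and $\overline{D}_k$. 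Irreducibility is then achieved by Brusotti's theorem: one smooths a single conjugate pair of nodes of $E\cap(D_k\cup\overline{D}_k)$ while preserving all the others, and the genus formula forces the resulting irreducible curve to be rational. Finally --- and this reverses the order of your argument --- the line $L_\infty$ is not prescribed in advance via the parametrization, but chosen \emph{after} the construction using the topology of $C_d(\mathbb{R})$, which is a simple closed curve in $\mathbb{RP}^2$, one-sided if and only if $d$ is odd; this is what guarantees a line meeting $C_d(\mathbb{R})$ in a single point (transversally for $d$ odd, tangentially for $d$ even), through $p_s$ in case 2. Note also that your plan to make the coordinate $R$ of the parametrization have ``a single real root'' cannot be taken literally when $d$ is even, since the real roots of a real degree-$d$ form have total multiplicity congruent to $d$ modulo $2$; the tangential intersection the paper produces for even $d$ is forced by this parity.
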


\begin{proof}
The assertion is clear for $d=1,2$. We assume from now on that $d\geq 3$.  
 
1) If $d\equiv 1,2 \mod 4$, let $k=\frac12 (d-1)$ if $d$ is odd or $k=\frac12 (d-2)$ if $d$ is even. Observe that in any case, $k\ne0 $ is even, 
thus a general nodal rational complex curve $D_k\subset \mathbb{P}^2_{\mathbb{C}}$ of degree $k$ has no real point, meaning that $D_k\cap\mathbb{P}_{\mathbb{R}}^{2}(\mathbb{R})=\varnothing$. 
Let $D_k$ be such a curve given by an homogeneous complex polynomial $h$ of degree $k$.
A well-chosen projection $D_k$ to $\mathbb{P}^{2}_{\mathbb{C}}$ of the rational normal curve of degree $k$ in $\mathbb{P}^{k}_{\mathbb{C}}$ should suit. In particular, $D_k$ has $\frac 12(k-1)(k-2)$ non-real nodal points. The conjugated curve $\overline{D}_k$ of $D_k$ defined by the vanishing of the conjugated polynomial of $h$ 
intersects transversally $D_k$ in $k^2$ points 
Thus $D_k\cup \overline{D}_k$ is a reducible curve of degree $d-1$ defined over $\mathbb{R}$ with $(k-1)(k-2)+k^2$ non-real nodal points.
The union  $\tilde{C}_d=D_k\cup \overline{D}_k\cup E$, where $E$ is a general real line if $d$ is odd or a general real conic with non-empty real locus if $d$ is even, is a reducible curve of degree $d$ defined over $\mathbb{R}$. 

If $d$ is odd, $E$ meets $D_k\cup \overline{D}_k$ in $2k$ nodal points which are necessarily non-real, then $\tilde{C}_d$ has $k^2-\frac 12k+1$ pairs of non-real complex conjugate ordinary nodes. If $d$ is even, $E$ meets $D_k\cup \overline{D}_k$ in $4k$ nodal points then $\tilde{C}_d$ has $k^2+\frac 12k+1$ pairs of non-real complex conjugate ordinary nodes.
Given any pair of conjugate nodes of $(D_k\cap E)\cup (\overline{D}_k\cap E)$, it follows from Brusotti Theorem \cite[\S 5.5]{BR90} that there exists a small perturbation of $\tilde{C}_d$ which realizes a local smoothing of the later pair of nodes but preserves all the others. The resulting real curve $C_d$ is irreducible of degree $d$ with $\frac 14(d-1)(d-2)$ pairs of non-real conjugate nodes. 
The genus formula then implies that $C_d$ is geometrically rational. Furthermore, the real locus of $C_d$ contains smooth real points derived from the real locus of $E$ outside of $D_k\cup \overline{D}_k$. Thus $C_d$ is rational.
We now observe that the real locus of $C_d$ is a simple closed curve in the real projective plane which is one-sided if and only if $d$ is odd. If $d$ is odd (resp. $d$ is even) we deduce that there exists a real projective line $L_{\infty}$ meeting transversally $C_d(\mathbb{R})$ in one point $p$ and transversally at every other point (resp. meeting tangentially  $C_d(\mathbb{R})$ in one point $p$ and transversally at every other point).

  2) If $d\equiv 0,3 \mod 4$, let $k=\frac12 (d-1)$ if $d$ is odd or $k=\frac12 (d-2)$ if $d$ is even. Observe that in any case, $k$ is odd. 
As in  the preceding case, let $D_k\subset \mathbb{P}^2_{\mathbb{C}}$ be a general nodal rational curve given by an homogeneous polynomial $h$ of degree $k$.
As $k$ is odd, $D_k$ meets $\mathbb{P}_{\mathbb{R}}^{2}(\mathbb{R})$ in a unique point, say $p_s$. 
 
The same construction of a perturbation $C_d$ of $(D_k\cup \overline{D}_k\cup E)$ as in the preceding case works except that the line $L_\infty$ at the end of the construction is taken in the set of real lines passing through $p_s$.
\end{proof}

\begin{example}
Let us describe in more details the case $d=5$. Here $k=2$ and $D_k$ is a smooth conic. Take for example the complex conic given by 
$$
x^2+(1-\frac i3)y^2+(1-i)z^2=0
$$
Then $D_k\cap\mathbb{P}_{\mathbb{R}}^{2}(\mathbb{R})=\varnothing$ and the quartic $D_k\cup \overline{D}_k$ has exactly four non-real singular points $[\pm\sqrt 2,\pm i\sqrt 3,1]$ which are all nodes. Let $E$ be the real line of equation $y=0$. Then $\tilde{C}_d=D_k\cup \overline{D}_k\cup E$ is given by 
 $$
 2x^2y^3+2z^2x^2y+\frac83y^3z^2+yx^4+\frac{10}9y^5+2z^4y=0
 $$
 The curve $\tilde{C}_d$ has exactly eight singular points $[\pm\sqrt 2,\pm i\sqrt 3,1],[\pm\root4\of 2 e^{\pm i\frac{3\pi}8},0,1]$ which are all non-real nodes. Choosing a pair of conjugated nodes, say $[\root4\of 2 e^{\pm i\frac{3\pi}8},0,1]$ and perturbating them, we get a real quintic $C_d$ with one oval as real locus and three pairs of non-real complex conjugate ordinary nodes. Thus $C_d$ is a real rational curve.   
\end{example}

The following lemma completes the proof of Theorem \ref{thm:NonFib-Main}: 
\begin{lem}
\label{lem:RealKod-comp} For a real nodal rational curve $C_{d}\subset\mathbb{P}_{\mathbb{R}}^{2}$
of degree $d\geq5$ as in Proposition \ref{prop:Curves-with-nonreal-nodes},
we have $\kappa_{\mathbb{R}}(\mathbb{P}_{\mathbb{R}}^{2}\setminus(C_{d}\cup L_{\infty}))\geq0$. 
\end{lem}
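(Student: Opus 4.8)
The plan is to compute $\kappa_{\mathbb{R}}$ directly from Definition \ref{def:rKod}: I would produce an explicit smooth completion $(V,B)$ of $S=\mathbb{P}_{\mathbb{R}}^{2}\setminus(C_{d}\cup L_{\infty})$ for which $B$ has no imaginary loop, read off the real boundary $B_{\mathbb{R}}$, and then exhibit an effective representative of a positive multiple of $K_{V}+B_{\mathbb{R}}$. Concretely, I would build $\tau\colon V\to\mathbb{P}_{\mathbb{R}}^{2}$ as a log-resolution of the pair $(\mathbb{P}_{\mathbb{R}}^{2},C_{d}\cup L_{\infty})$ which is at the same time free of imaginary loops, obtained as follows: blow up every node of $C_{d}$; blow up every \emph{non-real} point of $C_{d}\cap L_{\infty}$ (these are exactly the points where the two real components $\tilde{C}_{d}$ and $\tilde{L}_{\infty}$ would otherwise meet at conjugate points, i.e. the sources of imaginary loops); and in the configurations forced by Proposition \ref{prop:Curves-with-nonreal-nodes} that involve a real point --- the real tangency of $C_{d}$ with $L_{\infty}$ when $d$ is even, resp. the real node $p_{s}\in L_{\infty}$ with conjugate tangents in case~2 --- perform the one or two extra blow-ups over that real point needed to make the total transform SNC and to separate $\tilde{C}_{d}$ from the real exceptional curves. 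After these blow-ups one checks that $B=\tau^{-1}(C_{d}\cup L_{\infty})$ is SNC and that any two of its real components with infinite real locus meet only at real points, so that $B$ has no imaginary loop, as required by the discussion preceding Definition \ref{def:rKod}.

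Next I would identify $B_{\mathbb{R}}$. The components of $B$ defined over $\mathbb{R}$ with infinite real locus are the proper transforms $\tilde{C}_{d}$ and $\tilde{L}_{\infty}$ together with the (one or two) real exceptional curves lying over the real tangency, resp. over $p_{s}$; all exceptional curves over the non-real nodes of $C_{d}$ and over the non-real points of $C_{d}\cap L_{\infty}$ occur in complex conjugate pairs and are therefore excluded from $B_{\mathbb{R}}$. Writing $\ell=\tau^{*}H$ for the hyperplane pullback and using that $C_{d}$ has multiplicity $2$ at each of its nodes and that $K_{\mathbb{P}^{2}}=-3H$, a direct computation in $\mathrm{Pic}(V)$ gives
\[
K_{V}+B_{\mathbb{R}}=(d-2)\ell-N_{\mathbb{C}},
\]
where $N_{\mathbb{C}}$ is an effective combination of the exceptional classes. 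The decisive point is the companion identity
\[
2(K_{V}+B_{\mathbb{R}})=\tilde{C}_{d}+\tilde{L}_{\infty}+(d-5)\ell+R,
\]
where $R$ is an effective sum of the real exceptional curves over the distinguished real point (with $R=0$ in case~1 with $d$ odd). Both identities follow from the numerics $\tilde{C}_{d}=d\ell-2\sum e_{j}-\cdots$ and $\tilde{L}_{\infty}=\ell-\cdots$, which I would pin down and cross-check against adjunction via the relations $(K_{V}+\tilde{C}_{d})\cdot\tilde{C}_{d}=-2$ and $(K_{V}+\tilde{L}_{\infty})\cdot\tilde{L}_{\infty}=-2$, valid since $\tilde{C}_{d}$ and $\tilde{L}_{\infty}$ are smooth rational curves.

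Finally, since $d\geq5$ the right-hand side of the companion identity is a sum of effective divisors: $\tilde{C}_{d}$, $\tilde{L}_{\infty}$, the $(d-5)$ copies of the class of a general line, and the effective exceptional remainder $R$. Hence $2(K_{V}+B_{\mathbb{R}})$ is linearly equivalent to an effective divisor, so $h^{0}(V,2(K_{V}+B_{\mathbb{R}}))>0$ and therefore $\kappa_{\mathbb{R}}(\mathbb{P}_{\mathbb{R}}^{2}\setminus(C_{d}\cup L_{\infty}))=\kappa(V,K_{V}+B_{\mathbb{R}})\geq0$. I expect the main obstacle to be the bookkeeping in the two ``exceptional'' configurations of step one: over the real node $p_{s}\in L_{\infty}$ and over the real tangency one must blow up \emph{real} points, whose exceptional curves \emph{do} enter $B_{\mathbb{R}}$, and one has to verify that after multiplication by $2$ these curves assemble precisely into the effective remainder $R$ rather than contributing negatively and spoiling effectivity. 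Once the divisor classes in $\mathrm{Pic}(V)$ are correctly determined in each case, the effectivity of $2(K_{V}+B_{\mathbb{R}})$, and hence the conclusion, is immediate.
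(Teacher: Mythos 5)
Your proposal is correct, and it takes a genuinely different route from the paper. The paper splits the argument by degree: for $d\geq 6$ it avoids all divisor computation by citing the Coolidge--Kumar--Murthy criterion --- the proper transform of $C_{\mathbb{C}}$ in a resolution of its nodes has self-intersection $-d^{2}+6d-4\leq-4$, so $C_{\mathbb{C}}$ is not Cremona-equivalent to a line, whence $\kappa(X,K_{X}+\tilde{C}_{\mathbb{C}})\geq0$, and monotonicity (adding the remaining components of $(B_{\mathbb{R}})_{\mathbb{C}}$) gives $\kappa_{\mathbb{R}}\geq0$ with no case analysis; only for $d=5$, where that complex obstruction genuinely fails (the nodal quintic \emph{is} Cremona-equivalent to a line, cf.\ Remark \ref{rem:DiffBir-Quintic}), does the paper do a computation of your type, but on a different model: it blows up the real point of $C\cap L_{\infty}$ first, works on $\mathbb{F}_{1}$ with $\mathbb{Q}$-coefficients, and ends with $2K_{V}+B_{\mathbb{R}}\sim C_{0}'$. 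Your uniform computation does go through: writing classes in the orthogonal basis of total-transform classes, your companion identity holds in all four configurations of Proposition \ref{prop:Curves-with-nonreal-nodes}, with $R=0$ when $d\equiv1$, $R$ equal to twice the total transform of the exceptional curve over the real tangency when $d\equiv2$, $R$ equal to the proper transform of the exceptional curve over the real node $p_{s}$ when $d\equiv3$, and the sum of these two contributions when $d\equiv0\pmod 4$; all are effective, so $2(K_{V}+B_{\mathbb{R}})$ is effective for $d\geq5$ and the conclusion follows. Two small corrections to your bookkeeping: over $p_{s}$ the resolution requires blowing up $p_{s}$ \emph{and} the conjugate pair of points where $\tilde{C}_{d}$ meets the resulting real exceptional curve (these two non-real centers, not extra real ones, are what kill the imaginary loop there), and in case 2 your divisor $N_{\mathbb{C}}$ is not purely supported on non-real exceptional classes --- it contains real ones --- though this is harmless since only the companion identity enters the conclusion. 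As for what each approach buys: the paper's is shorter for $d\geq6$ and isolates the conceptual point that $d=5$ is the only degree where the real structure (discarding the non-real exceptional curves from $B_{\mathbb{R}}$) is truly needed; yours is self-contained, avoids Coolidge altogether, treats all $d\geq5$ at once, and exhibits an explicit effective representative of $2(K_{V}+B_{\mathbb{R}})$, at the price of a case-by-case bookkeeping over the four real configurations, which the paper's $d\geq6$ argument never has to see.
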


\begin{proof}
It is well-known that if $d\geq6$, the complexification of $C=C_{d}$
cannot be mapped to a line by a birational automorphism of $\mathbb{P}_{\mathbb{C}}^{2}$. 
Indeed, since the proper transform of $C_{\mathbb{C}}$ in any resolution of
its singularities $\sigma:X\rightarrow\mathbb{P}_{\mathbb{C}}^{2}$
has self-intersection lower than or equal to $-2$, $C_{\mathbb{C}}$ cannot be contracted
to a point by a birational automorphism of $\mathbb{P}_{\mathbb{C}}^{2}$.

It then follows from a characterization attributed to Coolidge \cite{Co59}
(see also \cite{KuMu83}) that the Kodaira dimension $\kappa(X,K_{X}+\tilde{C}_{\mathbb{C}})$
is non-negative. Let $\tau:V\rightarrow\mathbb{P}_{\mathbb{R}}^{2}$
be a log resolution of the pair $(\mathbb{P}_{\mathbb{R}}^{2},C\cup L_{\infty})$
defined over $\mathbb{R}$ restricting to an isomorphism over $\mathbb{P}_{\mathbb{R}}^{2}\setminus(C\cup L_{\infty})$
and such that $B=\tau^{-1}(C\cup L_{\infty})_{\mathrm{red}}$ has
no imaginary loop. Since $\tilde{C}_{\mathbb{C}}$ is an irreducible
component of $(B_{\mathbb{R}})_{\mathbb{C}}$, we have 

\[
\kappa_{\mathbb{R}}(\mathbb{P}_{\mathbb{R}}^{2}\setminus(C\cup L_{\infty}))=\kappa(V_{\mathbb{C}},K_{V_{\mathbb{C}}}+(B_{\mathbb{R}})_{\mathbb{C}})\geq\kappa(V_{\mathbb{C}},K_{V_{\mathbb{C}}}+\tilde{C}_{\mathbb{C}})\geq0.
\]

In the remaining case $d=5$, the above argument does not longer work
since every rational nodal quintic can be mapped to a line by a birational
diffeomorphism of $\mathbb{P}_{\mathbb{R}}^{2}$ (see Remark \ref{rem:DiffBir-Quintic}
below). Nevertheless, a proof can be derived essentially along the
same lines as above from Iitaka's results \cite{Ii83,Ii88} on equivalence
classes of pairs of complex irreducible plane curves up to birational
automorphisms of $\mathbb{P}_{\mathbb{C}}^{2}$. But we find more
enlightening to provide a direct and self-contained argument. So here the curve
$C=C_{5}$ is a rational nodal quintic defined over $\mathbb{R}$
with only pairs of non-real complex conjugate ordinary nodes $\{q_{i},\overline{q}_{i}\}_{i=1,2,3}$
as singularities, which intersects $L_{\infty}$ transversally in
a real point $p$ and two pairs $\{p_{j},\overline{p}_{j}\}_{j=1,2}$
of non-real complex conjugate points. Let $\sigma_{1}:\mathbb{F}_{1}\rightarrow\mathbb{P}_{\mathbb{R}}^{2}$
be the blow-up of $p$ with exceptional divisor $C_{0}$. Denote by
$\ell_{i}$ and $\overline{\ell}_{i}$ the proper transforms of the
lines $[p,q_{i}]$ and $[p,\overline{q}_{i}]$ in $\mathbb{P}_{\mathbb{R}}^{2}$
and let $\mathbf{f}=\sum_{i=1}^{3}\ell_{i}+\overline{\ell}_{i}$.
Let $\sigma_{2}:V\rightarrow\mathbb{F}_{1}$ be the blow-up of the
pairs of points $\{q_{i},\overline{q}_{i}\}$ with exceptional divisors
$E_{i}$ and $\overline{E}_{i}$ and of the pairs $\{p_{j},\overline{p}_{j}\}$
with exceptional divisors $F_{j}$ and $\overline{F}_{j}$. We let
$\mathbf{E}=\sum_{i=1}^{3}E_{i}+\overline{E}_{i}$, $\mathbf{F}=\sum_{j=1}^{2}F_{j}+\overline{F}_{j}$,
and we let $\mathbf{f}'$ and $L'$ be the proper transform of $\mathbf{f}$
and $L$ respectively. The composition $\tau=\sigma_{1}\circ\sigma_{2}:V\rightarrow\mathbb{P}_{\mathbb{R}}^{2}$
is a log-resolution of the pair $(\mathbb{P}_{\mathbb{R}}^{2},C\cup L_{\infty})$
defined over $\mathbb{R}$, restricting to an isomorphism over $\mathbb{P}_{\mathbb{R}}^{2}\setminus(C\cup L_{\infty})$
and such that 
\[
B=\tau^{-1}(C\cup L_{\infty})_{\mathrm{red}}=C'+C_{0}'+L'+\mathbf{E}+\mathbf{F}=B_{\mathbb{R}}+\mathbf{E}+\mathbf{F}
\]
 is an SNC divisor without imaginary loop. 

Set $\alpha=\frac{1}{3}$, $\lambda=\frac{2}{3}$, $\beta=\mu=1$
so that $6\alpha+\beta=3$ and $6\lambda+\mu=5$. Letting $\ell$
be the proper transform on $\mathbb{F}_{1}$ of a general real line
through $p$ and $L$ be the proper transform of $L$, we have 
\begin{align*}
K_{\mathbb{F}_{1}} & \sim-2C_{0}-3\ell\sim_{\mathbb{Q}}-2C_{0}-\alpha\mathbf{f}-\beta L\\
C & \sim4C_{0}+5\ell\sim_{\mathbb{Q}}4C_{0}+\lambda\mathbf{f}+\mu L
\end{align*}
where we identified $C$ with its proper transform on $\mathbb{F}_{1}$.
Then on $V$, we obtain 
\begin{align*}
K_{V} & \sim\sigma_{2}^{*}(K_{\mathbb{F}_{1}})+\mathbf{E}+\mathbf{F}\sim_{\mathbb{Q}}-2C_{0}'-\alpha\mathbf{f}'-\beta L'+(1-\alpha)\mathbf{E}+(1-\beta)\mathbf{F}\\
C' & \sim\sigma_{2}^{*}(C)-2\mathbf{E}-\mathbf{F}\sim_{\mathbb{Q}}4C_{0}'+\lambda\mathbf{f}'+\mu L'+(\lambda-2)\mathbf{E}+(\mu-1)\mathbf{F}
\end{align*}
where $C_{0}'$ and $C'$ denote the proper transforms of $C_{0}$
and $C$ respectively. We thus obtain 
\begin{align*}
2K_{V}+B_{\mathbb{R}} & \sim_{\mathbb{Q}}C_{0}'+(\lambda-2\alpha)\mathbf{f}'+(\mu-2\beta+1)L'+(\lambda-2\alpha)\mathbf{E}+(\mu-2\beta+1)\mathbf{F}\\
 & \sim C_{0}'
\end{align*}
which implies that $\kappa_{\mathbb{R}}(\mathbb{P}_{\mathbb{R}}^{2}\setminus (C\cup L_\infty))\geq0$
as desired. 
\end{proof}
\begin{rem}
\label{rem:DiffBir-Quintic} 

As explained in the proof above, for
every $d\geq6$, the rational curve $C_{d}$ constructed in Proposition
\ref{prop:Curves-with-nonreal-nodes} cannot be transformed into a
line by a birational automorphism of $\mathbb{P}_{\mathbb{R}}^{2}$.
This directly implies the weaker fact that the corresponding rational smooth embedding $\mathbb{A}_{\mathbb{R}}^{1}\dashrightarrow\mathbb{A}_{\mathbb{R}}^{2}$
is not rectifiable. In contrast, there exists a birational diffeomorphism
of $\mathbb{P}_{\mathbb{R}}^{2}$ that maps the nodal rational quintic
$C_{5}$ to a line: it consists of the blow-up $\sigma:V\rightarrow\mathbb{P}_{\mathbb{R}}^{2}$
of the three pairs $\{q_{i},\overline{q}_{i}\}_{i=1,2,3}$ of non-real
complex conjugate nodes of $C_{5}$, followed by the contraction $\tau:V\rightarrow\mathbb{P}_{\mathbb{R}}^{2}$
of the proper transforms of the three pairs of non-real complex conjugate
nonsingular conics passing through five of the six points blown-up.
The proper transform of $C_{5}$ and $L_{\infty}$ by $\tau\circ\sigma^{-1}$
are respectively a line and a quintic with three pairs of non-real
complex conjugate nodes. 
\end{rem}

\section{Classification up to degree four }

In this section, we study the rectifiability and biddability of rational smooth embeddings
$f:\mathbb{A}_{\mathbb{R}}^{1}\dashrightarrow\mathbb{A}_{\mathbb{R}}^{2}$
whose associated projective curves $C\subset\mathbb{P}_{\mathbb{R}}^{2}$
have degrees less than or equal to four. 

\subsection{Degree lower than or equal to $3$ }
\begin{prop} \label{prop:rectif-up3}
Every rational smooth embedding $f:\mathbb{A}_{\mathbb{R}}^{1}\dashrightarrow\mathbb{A}_{\mathbb{R}}^{2}$
whose associated curve $C\subset\mathbb{P}_{\mathbb{R}}^{2}$ has
degree $\leq3$ is rectifiable. 
\end{prop}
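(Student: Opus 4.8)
The plan is to classify the possible curves $C$ by degree and show that in each case $f$ is rectifiable, using Lemma~\ref{lem:RatEmbed-from-projcurve} to control the geometry at infinity and the rectifiability criterion via $\mathbb{A}^1$-fibrations. First I would dispose of the degrees $1$ and $2$. If $\deg C=1$ then $C$ is a line and $f$ is, up to a linear automorphism of $\mathbb{A}_{\mathbb{R}}^2$, already $j_{\mathrm{lin}}$. If $\deg C=2$ then $C$ is a smooth conic; by Lemma~\ref{lem:RatEmbed-from-projcurve}c) the inverse image of $C\cap L_\infty$ in $\mathbb{P}_{\mathbb{R}}^1$ must contain a unique real point, which forces $C$ to be tangent to $L_\infty$ at a real point or to meet it in a real point and a conjugate pair is excluded; a projective change of coordinates defined over $\mathbb{R}$ then puts $C$ into the standard parabola $\{Y Z = X^2\}$, whose affine part is the graph $\{y=x^2\}$, manifestly rectifiable by the linear shear $(x,y)\mapsto(x,y-x^2)$.

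The substantive case is $\deg C=3$. Here $C$ is an irreducible rational cubic, so by the genus formula its complexification $C_{\mathbb{C}}$ has exactly one singular point, which is either a node or a cusp. The constraint from Lemma~\ref{lem:RatEmbed-from-projcurve}b) is that $C$ is smooth at every real point of $C\cap\mathbb{A}_{\mathbb{R}}^2$ and that the preimage of $C\cap L_\infty$ in the normalization contains a \emph{unique} real point. The plan is to show that, after a real projective automorphism, one may assume the unique singular point of $C_{\mathbb{C}}$ lies on $L_\infty$: indeed, if the singularity were a real point of the affine part $C\cap\mathbb{A}_{\mathbb{R}}^2$, smoothness of the real locus would force it to be a node with non-real conjugate branches, but then $\overline{f}^{-1}$ of that point would be a conjugate pair of non-real points, and one checks this is incompatible with $U(\mathbb{R})=\mathbb{R}$ together with properness; so the singularity sits at infinity (as in Example~\ref{ex:cubic}, where $p_s\in L_\infty$ is a real node with conjugate imaginary tangents). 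Concretely, I expect to reduce to the normal form $\{Z^2 Y - X(X+Y)^2=0\}$ of Example~\ref{ex:cubic}, whose explicit rectification is spelled out in Example~\ref{exa:cubic-continued}, or to a short list of such normal forms each handled by an explicit birational diffeomorphism contracting the pair of conjugate imaginary tangent lines at $p_s$.

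The main obstacle is the bookkeeping of \emph{how} $C$ meets $L_\infty$, since rectifiability depends on realizing $C$ as a fiber component of an $\mathbb{A}^1$-fibration on a model of $\mathbb{R}^2$, and this requires the pencil through $C$ and its configuration at infinity to satisfy the hypotheses of Proposition~\ref{thm:Main-rectif-criterion}. The cleanest route is to exhibit directly the birational endomorphism of $\mathbb{P}_{\mathbb{R}}^2$ rectifying $C$, mimicking Example~\ref{exa:cubic-continued}: blow up the real point $p_\infty=C\cap L_\infty$ lying in the smooth locus of $C$ and the real singular point $p_s$, then contract the proper transform of the pair of conjugate imaginary tangents together with $L_\infty$, checking that the resulting map is $\mathbb{R}$-biregular on $\mathbb{P}_{\mathbb{R}}^2\setminus L_\infty$ and sends $C$ to a line. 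The delicate point I anticipate is verifying that no real base points or real indeterminacies are introduced, so that the map genuinely restricts to a birational diffeomorphism of $\mathbb{A}_{\mathbb{R}}^2$; this is exactly where the hypothesis that the singularity and the tangent cone are a \emph{non-real} conjugate pair is used, guaranteeing the contracted curves avoid the real locus. Once the real cubic is rectified, the induced $\alpha$ satisfies $\alpha\circ f = j_{\mathrm{lin}}$ up to reparametrization, completing the proof.
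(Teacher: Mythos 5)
There is a genuine gap in the cubic case: you never handle the possibility that the unique singular point $p_s$ is a \emph{cusp}. You correctly note that $C_{\mathbb{C}}$ has exactly one singularity, a node or a cusp, and correctly force it onto $L_\infty$; but from that point on your rectification plan presupposes a node with non-real conjugate tangents (``contracting the pair of conjugate imaginary tangent lines at $p_s$''). The paper's proof shows the true dichotomy is: either $C\cap L_\infty=\{p_s\}$, in which case $p_s$ cannot be a node (the line would be tangent to one branch, the other branch would then have a real tangent, giving two real points in the preimage of $C\cap L_\infty$, contradicting Lemma~\ref{lem:RatEmbed-from-projcurve}), so $p_s$ is an ordinary cusp with tangent $L_\infty$; or $C\cap L_\infty=\{p_s,p_0\}$ with $p_0$ real and transversal, in which case $p_s$ is a node with conjugate imaginary tangents. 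In the cuspidal case there is nothing to contract: the rectification comes from observing that $f$ is then an everywhere-defined regular closed embedding of $\mathbb{R}$-schemes, and invoking the Abhyankar--Moh theorem over $\mathbb{R}$ --- an input your proposal never uses and has no substitute for. (One could alternatively note that the pair (cuspidal cubic, cuspidal tangent) is projectively rigid over $\mathbb{R}$, equivalent to $(\{YZ^2=X^3\},\{Z=0\})$, i.e.\ the graph of $x^3$; but some such argument must be supplied.) The paper in fact uses Abhyankar--Moh in \emph{all three} of its cases, including the conic and the nodal cubic, as the final step after making $\alpha\circ f$ regular.

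Two further points in the nodal case. First, your reduction to the single normal form $\{Z^2Y-X(X+Y)^2=0\}$ is only asserted (``I expect to reduce''); it requires transitivity of $\mathrm{PGL}_3(\mathbb{R})$ on pairs (acnodal cubic, real line through the node meeting the curve transversally in a smooth real point), which is plausible but not free. The paper avoids any normal form: it blows up $p_s$ to reach $\mathbb{F}_1$, where $C$ becomes a section meeting the exceptional curve $E$ in the conjugate pair $\{q,\overline q\}$, performs the elementary transformation blowing up $q,\overline q$ and contracting the proper transforms of $T,\overline T$ to reach $\mathbb{F}_3$, observes that $C$ is now disjoint from $E\cup L_\infty$ so that $\alpha\circ f$ is a regular closed embedding into $\mathbb{F}_3\setminus(E\cup L_\infty)\simeq\mathbb{A}^2_{\mathbb{R}}$, and concludes by Abhyankar--Moh. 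Second, your stated construction --- blow up the two real points $p_\infty$ and $p_s$, then contract $T'$, $\overline T{}'$ \emph{and} $L_\infty$ --- is numerically impossible: it would drop the Picard rank from $3$ to $0$. The conjugate base points $q,\overline q$ infinitely near $p_s$ must be blown up as well, and even then one lands in a Hirzebruch surface rather than $\mathbb{P}^2_{\mathbb{R}}$, which is why the paper's argument passes through $\mathbb{F}_3$ and Abhyankar--Moh instead of through a birational endomorphism of $\mathbb{P}^2_{\mathbb{R}}$.
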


\begin{proof}
If $\deg C=1$ then $C$ is a real line in $\mathbb{P}_{\mathbb{R}}^{2}$
and so $f$ is rectifiable. If $\deg C=2$, then $C$ is a smooth
conic, whose intersection with $L_{\infty}$ must consists of a unique
real point $p_{\infty}$ by Lemma \ref{lem:RatEmbed-from-projcurve}. It follows
that $f:\mathbb{A}_{\mathbb{R}}^{1}\dashrightarrow\mathbb{A}_{\mathbb{R}}^{2}$
is actually a closed emdedding of $\mathbb{R}$-schemes. So $f:\mathbb{A}_{\mathbb{R}}^{1}\hookrightarrow\mathbb{A}_{\mathbb{R}}^{2}$
is rectifiable by an $\mathbb{R}$-automorphism of $\mathbb{A}_{\mathbb{R}}^{2}$
by virtue of the Abhyankar-Moh Theorem over $\mathbb{R}$. 

We now assume that $C$ is a rational cubic in $\mathbb{P}_{\mathbb{R}}^{2}$. Then $C$ has a unique singular point which is either an ordinary double point or a cusp of multiplicity two. Since this point is unique, it is a real point $p_s$ of $C$, and it follows from Lemma \ref{lem:RatEmbed-from-projcurve} that $p_s\in C\cap L_{\infty}$. The intersection multiplicity of $C$
and $L_{\infty}$ at $p_{s}$ is thus at least $2$, and since $C \cdot L_{\infty}=3$,
it follows that $C\cap L_{\infty}$ contains at most another point
distinct from $p_{s}$. This yields the following alternative: 

a) $C\cap L_{\infty}=\{p_{s}\}$. Suppose that $p_s$ is an ordinary double point of $C$. Then $L_\infty$ is the tangent to one of the two analytic branches of $C$ at $p_s$. The tangent to the other branch is then necessarily real, so that the inverse image of $p_s$ in the normalization of $C$ consists of two distinct real points, which is impossible by Lemma \ref{lem:RatEmbed-from-projcurve}. Thus $p_{s}$ is an ordinary cusp, with tangent $L_{\infty}$. Then $f:\mathbb{A}_{\mathbb{R}}^{1}\dashrightarrow\mathbb{A}_{\mathbb{R}}^{2}$ is a closed embedding of $\mathbb{R}$-schemes, hence is rectifiable by the Abhyankar-Moh Theorem over $\mathbb{R}$.

b) $C\cap L_{\infty}$ consists of $p_{s}$ and a second real point
$p_{0}$ of $C$. Since the intersection multiplicity of $C$ and $L_\infty$ 
at $p_{0}$ is equal to one, it follows that $p_0$ is a smooth point of $C$ at which $C$ and $L_\infty$ intersect transversally. The inverse of $C\cap L_\infty$ in the normalization of $C$ thus contains the inverse image of $p_0$ as a real point. It then follows from Lemma \ref{lem:RatEmbed-from-projcurve} that $p_{s}$ is an ordinary double point with non-real complex conjugate tangents $T$ and $\overline{T}$. Example \ref{ex:cubic} provides an illustration of this situation in which the rational smooth embedding under consideration is rectifiable. Let us show that this holds in general. Let $\tau_{1}:\mathbb{F}_{1}\rightarrow\mathbb{P}_{\mathbb{R}}^{2}$
be the blow up of $p_{s}$ with exceptional divisor $E$ and let $\rho_{1}:\mathbb{F}_{1}\rightarrow\mathbb{P}_{\mathbb{R}}^{1}$
be the $\mathbb{P}^{1}$-bundle structure on $\mathbb{F}_{1}$ induced
by the projection $\mathbb{P}_{\mathbb{R}}^{2}\dashrightarrow\mathbb{P}_{\mathbb{R}}^{1}$
from the point $p_{s}$. The proper transform of $L_{\infty}$ is
a real fiber of $\rho_{1}$ while $T$ and $\overline{T}$ are a pair
of non-real complex conjugate fibers. The proper transform of $C$
is a smooth rational curve which is a section of $\rho_{1}$ intersecting
$E$ in the pair of non-real complex conjugate points $\{q,\overline{q}\}=(T\cup\overline{T})\cap E$.
Let $\tau_{2}:\mathbb{F}_{1}\dashrightarrow\mathbb{F}_{3}$ be the
elementary transformation defined over $\mathbb{R}$ obtained by blowing-up
$q$ and $\overline{q}$ and contracting the proper transforms of
$T$ and $\overline{T}$. The proper transform of $C$ is then a section
of $\rho_{3}:\mathbb{F}_{3}\rightarrow\mathbb{P}_{\mathbb{R}}^{1}$
which does not intersect the proper transform of $E$. The composition
$\tau_{2}\circ\tau_{1}^{-1}:\mathbb{P}_{\mathbb{R}}^{2}\dashrightarrow\mathbb{F}_{3}$
induces a birational diffeomorphism $\alpha:\mathbb{A}_{\mathbb{R}}^{2}=\mathbb{P}_{\mathbb{R}}^{2}\setminus L_{\infty}\dashrightarrow\mathbb{F}_{3}\setminus(E\cup L_{\infty})\simeq\mathbb{A}_{\mathbb{R}}^{2}$
with the property that $\alpha\circ f:\mathbb{A}_{\mathbb{R}}^{1}\dashrightarrow\mathbb{A}_{\mathbb{R}}^{2}$
is a closed embedding of $\mathbb{R}$-schemes which is thus rectifiable
by the Abhyankar-Moh Theorem over $\mathbb{R}$. This implies in turn
that $f$ is rectifiable. 
\end{proof}

\subsection{The quartic case }\label{subsec:quartic}

Now we consider the case where the projective curve $C$ associated
to $f\colon\mathbb{A}_{\mathbb{R}}^{1}\dashrightarrow\mathbb{A}_{\mathbb{R}}^{2}$
is a quartic. The types and configurations of singularities that can
appear on, or infinitely near to, an irreducible rational complex
plane quartic $C_{\mathbb{C}}$ are completely determined by the two
classical formulas
\[
g=\frac{(d-1)(d-2)}{2}-\sum_{p}\frac{1}{2}m_{p}(m_{p}-1)\qquad\textrm{and}\qquad\sum_{q\mapsto p}m_{q}\leq m_{p}.
\]
In the first one, $p$ varies over all singularities of $C_{\mathbb{C}}$ including
infinitely near ones and $m_{p}$ denotes the multiplicity at $p$. In the second one $q$ varies in the first infinitely near neighborhood
of $p$. The above formulas permit nine possible types of singularities
in the complex case. Some additional sub-types arise in the real case
according to the type of branches of $C$ at a given singular point, see 
\cite{Ma33,GuUtTaj66,Na84}, these are summarized in Table
\ref{QuarticSing}. 

\begin{table}[htb!]
\begin{tabular}{|c|c|c|}
\hline 
Classical Name & Modern Name & Multiplicity sequence\tabularnewline
\hline 
\hline 
Ordinary Node & $A_{1}$, $A_{1}^{*}$ & $[2]$\tabularnewline
\hline 
Ordinary Cusp & $A_{2}$ & $[2]$\tabularnewline
\hline 
Tacnode & $A_{3}$,$A_{3}^{*}$ & $[2,2]$\tabularnewline
\hline 
Double Cusp & $A_{4}$ & $[2,2]$\tabularnewline
\hline 
Oscnode & $A_{5}$, $A_{5}^{*}$ & $[2,2,2]$\tabularnewline
\hline 
Ramphoid Cusp & $A_{6}$ & $[2,2,2]$\tabularnewline
\hline 
Ordinary triple point & $D_{4}$,$D_{4}^{*}$ & $[3]$\tabularnewline
\hline 
Tacnode Cusp & $D_{5}$ & $[3]$\tabularnewline
\hline 
Multiplicity $3$ Cusp & $E_{6}$ & $[3]$\tabularnewline
\hline 
Pair of imaginary ordinary nodes & $2A_{1}^{i}$ & $[2]$ and $[2]$ \tabularnewline
\hline 
Pair of imaginary ordinary cusps & $2A_{2}^{i}$ & $[2]$ and $[2]$ \tabularnewline
\hline 
\end{tabular} 
\medskip
\caption{Possible singularity types of a real rational quartic.}
\label{QuarticSing}
\end{table}
The modern notation in Table \ref{QuarticSing} refers to Arnold's one \cite{Ar76},
with the following additional convention introduced by Gudkov \cite{Gu88}: 

- If there is no asterisk in the notation of a point then the point
is real and all the branches centered in it are real. 

- If there is one asterisk in the notation of a point then it is real
and precisely two branches centered in this point are non-real complex
conjugate.

- The upper index  refers to a pair of non-real conjugate points of
the given type.\\

The property for a real rational quartic $C$ to be the associated
projective curve of a rational smooth embedding $f:\mathbb{A}_{\mathbb{R}}^{1}\dashrightarrow\mathbb{A}_{\mathbb{R}}^{2}$
imposes additional restrictions on its configuration of singular
points:

\begin{lem} \label{lem:SingTypes}
The possible configurations of singularities of a real plane quartic
$C$ associated with a rational smooth embedding $f:\mathbb{A}_{\mathbb{R}}^{1}\dashrightarrow\mathbb{A}_{\mathbb{R}}^{2}$
are the following $($see Table \ref{QuarticSing-RatEmb}$)$:

1) Three singular points: 

\quad a) $A_{1}^*+2A_{1}^{i}$ where  the real singular point of type $A_1^*$ belongs $C\cap L_{\infty}$ and $L_\infty$ also intersects $C$ with multiplicity two at a smooth real point of $C$.

\quad b) $A_{2}+2A_{1}^{i}$ or $A_{2}+2A_{2}^{i}$. In each case the real singular point of type $A_2$ belongs to $C\cap L_{\infty}$, and the tangent line to $C$ at this point is different from $L_\infty$.

2) Two singular points: $A_{2}+A_{3}^{*}$ or $A_{4}+A_{1}^{*}$. In each case, both singular points are real points of $C\cap L_{\infty}$, and the tangent line to $C$ at each of them is different from $L_\infty$.

3) A unique singular point: $A_{5}^{*}$, $A_{6}$, $D_{4}^{*}$ or $E_{6}$. In each case, the unique singular point is a real point of $C\cap L_{\infty}$. In the cases $D_{4}^{*}$ and $E_6$, $L_\infty$ is a tangent line to $C$ at the singular point whereas in the case  $A_{5}^{*}$, the tangent line to $C$ at the singular point is different from $L_{\infty}$. 
\end{lem}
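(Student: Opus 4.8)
The plan is to run through the list of singularity types of real rational plane quartics recorded in Table~\ref{QuarticSing} and to discard, using the two conditions of Lemma~\ref{lem:RatEmbed-from-projcurve}, every configuration that cannot be associated to a rational smooth embedding. Those conditions assert that $C$ is smooth at each real point of $C\cap\mathbb{A}^2_{\mathbb{R}}$, equivalently that every real singular point of $C$ lies on $L_\infty$, and that the preimage $\nu^{-1}(C\cap L_\infty)$ under the normalization $\nu\colon\mathbb{P}^1_{\mathbb{R}}\to C$ contains exactly one real point. Since $C$ is rational of degree $4$, the genus formula gives $\sum_p\delta_p=3$, so over $\mathbb{C}$ the curve carries either a single singular point with $\delta=3$, two singular points with $\delta=1$ and $\delta=2$, or three singular points each with $\delta=1$; I would treat these three cases in turn. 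Note that a conjugate pair of singular points must consist of two points of the same local type, so in the two-point case both points are real, while in the three-point case there are either three real points or one real point together with a conjugate pair.

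The two quantitative tools are Bezout's theorem, $C\cdot L_\infty=4$, and a count of real analytic branches. Writing $s$ for the total number of real branches carried by the (necessarily on $L_\infty$) real singular points and $t$ for the number of smooth real points of $C$ lying on $L_\infty$, the unique-real-preimage condition becomes $s+t=1$. In particular each real singular point carries at most one real branch: this excludes the real forms $A_1,A_3,A_5,D_4$, which have two or more real branches, thereby forcing the starred forms $A_1^*,A_3^*,A_5^*,D_4^*$; it likewise excludes $D_5$, whose branches are of distinct analytic types and so are individually fixed by complex conjugation, hence all real. Bezout then controls the position of $L_\infty$: a conjugate pair of double points can never lie on $L_\infty$ (it would contribute intersection multiplicity $\ge 4$ on top of the real point), while two real double points on $L_\infty$ already exhaust $C\cdot L_\infty$, forcing $t=0$ and $L_\infty$ transverse to both.

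Running this through the three cases produces the stated list. With two singular points the balance $s=1$, $t=0$ keeps exactly $A_2+A_3^*$ and $A_4+A_1^*$ and discards $A_1+A_3$ and $A_2+A_4$. With a single singular point, $s+t=1$ together with the value of $C\cdot L_\infty$ at the point separates the tangential cases $D_4^*,E_6$ (where $L_\infty$ realizes intersection multiplicity $4$ at the point) from the transverse cases $A_5^*,A_6$. With three singular points the real point is $A_1^*$, which gives $s=0$ and hence $t=1$ with $L_\infty$ tangent to $C$ at a smooth real point, or $A_2$, which gives $s=1$, $t=0$ with $L_\infty$ distinct from the cuspidal tangent.

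The delicate point, and the one I expect to be the main obstacle, is that the bookkeeping above still admits the configuration $A_1^*+2A_2^i$, which must be ruled out separately. Here $L_\infty$ is forced to pass through the real node $p_s=A_1^*$ and to be tangent to $C$ at a smooth real point. I would argue through the projection $\psi=\pi_{p_s}\circ\nu\colon\mathbb{P}^1_{\mathbb{R}}\to\mathbb{P}^1_{\mathbb{R}}$ from $p_s$: since $p_s$ has multiplicity $2$, this is a morphism of degree $4-2=2$, so by Riemann--Hurwitz it has exactly two ramification points. A cusp makes $\nu$ non-immersive and hence $\psi$ ramified at the corresponding parameter, so the two conjugate cusps of $2A_2^i$ account for both ramification points; consequently all ramification of $\psi$ is non-real, there is no real line through $p_s$ tangent to $C$ at a smooth real point, and no admissible $L_\infty$ exists. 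This leaves only $A_1^*+2A_1^i$ in case~1a, the two conjugate nodes carrying smooth branches and imposing no ramification on $\psi$, so that a real tangent line through $p_s$ is permitted. The same projection computation, applied to the oscnode in the $A_5^*$ case, shows that its two branches form a single unramified fibre of $\psi$, so the ramification is free to be real there and that configuration survives.
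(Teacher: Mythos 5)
Your proposal is correct in substance and follows essentially the same route as the paper: the same sieve based on the two conditions of Lemma~\ref{lem:RatEmbed-from-projcurve} (real singular points forced onto $L_\infty$, unique real point in the preimage of $C\cap L_\infty$ under normalization), the genus formula, Bezout, and a count of real branches; and, crucially, the same key trick for the delicate case, namely projecting from the real node and invoking Riemann--Hurwitz for the resulting degree-two cover of $\mathbb{P}^1$ to exclude $A_1^*+2A_2^i$ (the paper phrases this as the cover being ramified at the two conjugate cusps \emph{and} at the real tangency point, three points, which is impossible). Your bookkeeping $s+t=1$ is a clean formalization of what the paper does case by case.

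One assertion in your write-up is wrong, though it does not affect the stated list: you group $A_6$ with $A_5^*$ as ``transverse cases,'' i.e.\ you claim the bookkeeping forces $T_{p_\infty}C\neq L_\infty$ for a ramphoid cusp. It does not. With a single real branch at $p_\infty$ one has $s=1$, $t=0$, and this is compatible with \emph{both} positions of $L_\infty$: if $L_\infty=T_{p_\infty}C$ then $(C\cdot L_\infty)_{p_\infty}=4$, so $C\cap L_\infty=\{p_\infty\}$ and the unique real preimage point comes from the single real branch; if $L_\infty\neq T_{p_\infty}C$ then $(C\cdot L_\infty)_{p_\infty}=2$ and the residual intersection is a pair of non-real conjugate points. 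This is why the lemma deliberately makes no claim about the tangent in the $A_6$ case, and why Lemma~\ref{lem:unique-sing} later splits $A_6$ into the tangential subcase (rectifiable, via Abhyankar--Moh, since $f$ is then an everywhere defined closed embedding) and the transverse subcase (only shown to be biddable). You should simply remove $A_6$ from the ``transverse'' grouping; only $A_5^*$ has its tangent forced away from $L_\infty$ (because both of its branches are non-real, so tangency would leave no real preimage point at all), and only $D_4^*$, $E_6$ have tangency forced.
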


\begin{table}[htb!]

\begin{tabular}{ccc}
\includegraphics[scale=1]{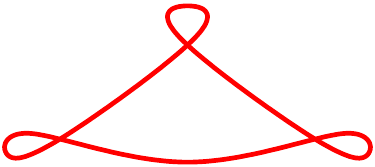} & \includegraphics[scale=1]{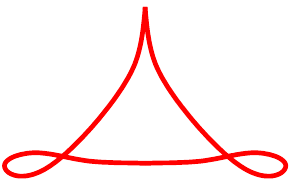} &  \includegraphics[scale=1]{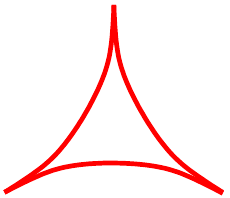}  \tabularnewline
$A_1^*+2A_1^i$ &  $A_2+2A_1^i$ & $A_2+2A_2^i$  \tabularnewline \tabularnewline
\multicolumn{3}{c}{Three singular points}\tabularnewline
\end{tabular}

\medskip

\begin{tabular}{cc}
\includegraphics[scale=1]{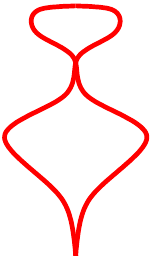} &  \includegraphics[scale=1]{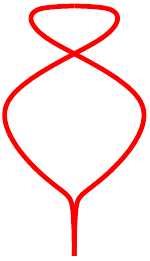} \tabularnewline
$A_2+A_3^*$ & $A_4+A_1^*$ \tabularnewline\tabularnewline
\multicolumn{2}{c}{Two singular points}\tabularnewline
\end{tabular}

\medskip

\begin{tabular}{cccc}
\includegraphics[scale=1]{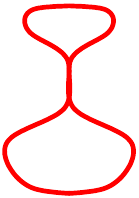} & \includegraphics[scale=1]{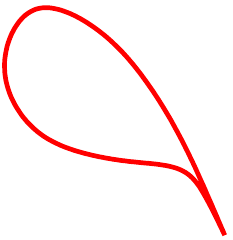} & \includegraphics[scale=1]{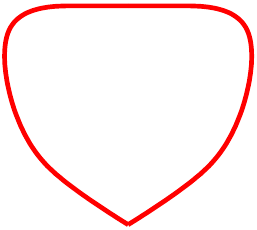} &  \includegraphics[scale=1]{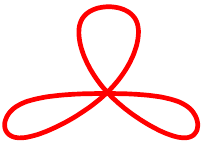}\tabularnewline
$A_5^*$  & $A_6$ & $E_6$ & $D_4^*$\tabularnewline\tabularnewline
\multicolumn{4}{c}{One singular point}\tabularnewline
\end{tabular}

\bigskip

\caption{Real projective quartics associated with rational smooth embeddings of $\mathbb{A}_{\mathbb{R}}^{1}$.}
\label{QuarticSing-RatEmb}
\end{table}

\begin{proof}
1) Assume that $C$ has three singular points. The genus formula implies they all have multiplicity sequence equal to $[2]$. At least one of these points, say $p_\infty$ is real and hence necessarily contained in $C\cap L_\infty$. If the other two were real, then since they must also be contained in $C\cap L_\infty$, we would have $(C \cdot L_\infty )\geq 6$ which is absurd. The other two singular 
points are thus a pair $\{p,\overline{p}\}$ of non-real complex conjugate points of type $2A_1^i$ or $2A_2^i$ of $C$. Furthermore, neither $p$ nor $\overline{p}$ belongs to $C\cap L_\infty$. We now discuss the possibilities according to the type of the point $p_\infty$. 

\quad a) $p_\infty$ is an ordinary double point. Then $p_\infty$ is necessarily of type $A_1^*$ for otherwise its inverse image in the normalization of $C$ would consists of two real point, in contradiction with Lemma \ref{lem:RatEmbed-from-projcurve}. This implies in particular that $L_\infty$ is not tangent to $C$ at $p_\infty$, hence that the intersection multiplicity of $C$ and $L_\infty$ at $p_\infty$ is equal to $2$. Since the inverse image of $C\cap L_\infty$ in the normalization of $C$ contains a unique real point and $C\cdot L_\infty=4$, we conclude that $C\cap (L_\infty\setminus p_\infty)$ consists of a unique real point $p_s$ and that $L_\infty$ is the tangent to $C$ at $p_s$.  This yields the possible types $A_{1}^*+2A_{1}^{i}$ and $A_{1}^*+2A_{2}^{i}$. To exclude the type $A_{1}^*+2A_{2}^{i}$, we observe that 
if it existed then the composition of the normalization morphism $\nu : \mathbb{P}^1_{\mathbb{R}}\rightarrow C$ with the morphism $C\rightarrow \mathbb{P}^1_{\mathbb{R}}$ induced by the linear projection from the point $p_\infty$ would be a double cover $\mathbb{P}^1_{\mathbb{R}}\rightarrow \mathbb{P}^1_{\mathbb{R}}$ ramified at the inverse images of $p_s$ and the pair of  non-real complex conjugate points $p$ and $\overline{p}$, which is impossible. We thus obtain case a).

\quad b) $p_\infty$ is an ordinary cusp $A_2$. Then the tangent line to $C$ at $p_{\infty}$ is distinct from $L_{\infty}$.
Indeed, otherwise we would have $(L_{\infty}\cdot C)_{p_{\infty}}=3$
and then $L_{\infty}\cap C$ would consists of $p_{\infty}$ and another
smooth real point of $C$ at which $C$ and $L_{\infty}$ intersects
transversally in contradiction with Lemma \ref{lem:RatEmbed-from-projcurve}.
So $(L_{\infty}\cdot C)_{p_{\infty}}=2$ and since the inverse image of $C\cap L_\infty$ already contains the inverse image of $p_s$ as a real point, it follows again from Lemma \ref{lem:RatEmbed-from-projcurve} 
that $\left(C\cap L_{\infty}\right)\setminus\left\{ p_{\infty}\right\}$ consists of a pair $\{q,\overline{q}\}$ of non-real complex conjugate
smooth points $q$ and $\overline{q}$ of $C$. This yields the possible types $A_{2}+2A_{2}^{i}$ and $A_{2}+2A_{1}^{i}$.

2) Assume that $C$ has two singular points, say $p_{0}$ and $p_{1}$. Then the genus formula
implies that these points have respective multiplicity sequences
$[2]$ and $[2,2]$. Since these are different, it follows that
$p_{0}$ and $p_{1}$ are real points of $C$, which are therefore
supported on $L_{\infty}$. Furthermore, since $C\cdot L_{\infty}=4$,
we have $C\cap L_{\infty}=\{p_{0},p_{1}\}$ and $L_\infty$ is not tangent to $C$ at $p_0$ or $p_1$. Since the inverse image of $C\cap L_\infty$ in the normalization of $C$ contains a unique real point, the union of the branches of $C$ at $p_0$ and $p_1$ contains a unique real branch. It follows that either $p_0$ is of type $A_2$ and then $p_1$ must be of type $A_3^*$ or $p_0$ is of type $A_1^*$ and then $p_1$ must be of type $A_4$.

3) Assume that  $C$ has a unique singular point $p_\infty$. Then $p_\infty$ is a real point of $C$, which must then be contained  $C\cap L_{\infty}$. By the genus formula, the possible multiplicity sequences for $p_{\infty}$ are $[3]$ or $[2,2,2]$. 

If $p_{\infty}$ is a triple point, then it follows from Lemma \ref{lem:RatEmbed-from-projcurve}
that $p_{\infty}$ is either a cusp of multiplicity three $E_{6}$
or an ordinary triple point of type $D_{4}^{*}$. Indeed, in the other two cases $D_5$ and $D_4$, the inverse image of $p_\infty$ in the normalization of $C$ would consist of two and three real points respectively. Since $(C\cdot L_\infty)_{p_{\infty}}\geq 3$, we conclude that  $L_{\infty}$ is a tangent line to $C$ at $p_{\infty}$. Indeed otherwise, if  $(C\cdot L_\infty)_{p_{\infty}}=3$ then $L_{\infty}$ would also intersect $C$ transversally in a smooth real point, which is again impossible by Lemma \ref{lem:RatEmbed-from-projcurve}.

Otherwise, if $p_{\infty}$ has multiplicity sequence $[2,2,2]$ then $p_{\infty}$ is either
a ramphoid cusp $A_{6}$ or an oscnode, which is then necessarily of type $A_{5}^{*}$ by Lemma \ref{lem:RatEmbed-from-projcurve} again. In the second case, the tangent line to $C$ at $p_\infty$ intersects $C$ with multiplicity four at $p_\infty$, so it cannot be equal to $L_\infty$. Indeed, otherwise we would have $C\cap L_\infty=\{p_\infty\}$ and then the inverse image of $C\cap L_\infty$ in the normalization of $C$ would not contain any real point, in contradiction to Lemma \ref{lem:RatEmbed-from-projcurve}.   
\end{proof}

\begin{prop}
\label{prop:rectif-quartic} Every rational smooth embedding
$f:\mathbb{A}_{\mathbb{R}}^{1}\dashrightarrow\mathbb{A}_{\mathbb{R}}^{2}$
whose associated projective curve $C$ is a quartic is biddable.
Furthermore, it is rectifiable except possibly in the cases where $C$ has
either a singular point of type $A_{5}^{*}$ or a singular point of
type $A_{6}$ whose tangent is different from $L_{\infty}$. 
\end{prop}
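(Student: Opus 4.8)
The plan is to handle biddability and rectifiability simultaneously, running through the finitely many singularity configurations of $C$ classified in Lemma~\ref{lem:SingTypes}. For each configuration I would construct, by explicit birational geometry defined over $\mathbb{R}$, a smooth projective surface $V$ carrying a $\mathbb{P}^1$-fibration $\overline{\pi}\colon V\to\mathbb{P}_{\mathbb{R}}^1$ in which the proper transform $C'$ of $C$ is the unique component lying in $S=V\setminus B$ of a fibre of $\overline{\pi}$ over a real point of $\mathbb{A}_{\mathbb{R}}^1$, and in which the transform of $L_\infty$ sits inside the fibre over $\infty\in\mathbb{P}_{\mathbb{R}}^1$. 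Concretely, I would start from the minimal log-resolution $\tau\colon V_0\to\mathbb{P}_{\mathbb{R}}^2$ of $(\mathbb{P}_{\mathbb{R}}^2,C\cup L_\infty)$ with $B_0=\tau^{-1}(C\cup L_\infty)_{\mathrm{red}}$ an SNC divisor without imaginary loop, and then perform a sequence of blow-ups and elementary transformations supported on the non-real components of the boundary and on the contacts of $C'$ with it---exactly in the spirit of the cubic case treated in Proposition~\ref{prop:rectif-up3}---until $C'$ becomes such a fibre component. The essential feature of the real geometry, guaranteed by Lemma~\ref{lem:SingTypes}, is that every singular point of $C$ and every tangential or higher contact with $L_\infty$ is either a real point lying on $L_\infty$ or a pair of non-real complex conjugate points off $L_\infty$; hence all the centres of the modifications are real points of $L_\infty$ or conjugate pairs, so the resulting birational map $\mathbb{A}_{\mathbb{R}}^2=\mathbb{P}_{\mathbb{R}}^2\setminus L_\infty\dashrightarrow S$ restricts to a birational diffeomorphism of real loci.

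Writing $B$ for the union of the transform of $L_\infty$, a section of $\overline{\pi}$, and the remaining fibre components, I would verify through Lemma~\ref{lem:A1Fibmodel-completion} and Theorem~\ref{thm:-A1Fib-Model} that $\pi=\overline{\pi}|_S\colon S\to\mathbb{A}_{\mathbb{R}}^1$ is an $\mathbb{A}^1$-fibered algebraic model of $\mathbb{R}^2$: one checks that each degenerate fibre of $\pi$ is a geometrically irreducible real curve isomorphic to $\mathbb{A}_{\mathbb{R}}^1$ of odd multiplicity, the even contributions having been pushed into $B$ by the conjugate blow-ups. Since $\alpha\circ f$ then embeds $\mathbb{A}_{\mathbb{R}}^1$ as the support of the fibre $C'\cap S$, this establishes biddability in \emph{every} case; it is moreover consistent with the numerical obstruction of Proposition~\ref{prop:RealKodObst-fiber}, biddability forcing $\kappa_{\mathbb{R}}(\mathbb{P}_{\mathbb{R}}^2\setminus(C\cup L_\infty))=-\infty$.

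For rectifiability I would feed the fibration into Proposition~\ref{thm:Main-rectif-criterion}; the decisive invariant is the multiplicity $m$ with which $C'$ occurs in its fibre of $\overline{\pi}$, which is dictated by the multiplicity sequence of the singularity of $C$ supported on $L_\infty$ and by the order of contact between $C$ and $L_\infty$ there. In the configurations $A_1^*+2A_1^i$, $A_2+2A_1^i$, $A_2+2A_2^i$, $A_2+A_3^*$, $A_4+A_1^*$ and the triple-point cases $D_4^*$ and $E_6$, the analysis of the fibres shows that $\pi$ has at most one non-reduced real fibre, namely the one supported on $C'$. When that fibre is reduced, condition a) of Proposition~\ref{thm:Main-rectif-criterion} applies and $S\simeq\mathbb{A}_{\mathbb{R}}^2$ by \cite{KaMi78}; when it is a single multiple fibre, condition b) applies and rectifiability follows from Theorem~\ref{thm:rectif-criterion} together with the explicit diffeomorphism of \cite[Section~4.2]{DuMa17}. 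Either way $f$ is rectifiable in these cases.

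The main obstacle, and the source of the two stated exceptions, is concentrated in the oscnode and ramphoid configurations $A_5^*$ and $A_6$ with tangent distinct from $L_\infty$, whose common multiplicity sequence $[2,2,2]$ forces a third successive blow-up over the singular point. I expect that when $L_\infty$ is not tangent to $C$ there, this extra infinitely near structure cannot be absorbed into the fibre over $\infty$ and instead yields, in the affine part of $S$, a \emph{second} degenerate fibre of $\pi$ distinct from $C'$. Consequently neither condition a) (a non-reduced fibre is present) nor condition b) (the non-reduced fibre is then not the unique one, nor is it supported on $C'$) of Proposition~\ref{thm:Main-rectif-criterion} is available, so the criterion is silent and rectifiability cannot be decided---whereas in the tangent cases $D_4^*$, $E_6$ and in the shorter $[2,2]$ configurations $A_3^*$, $A_4$ the extra fibre does not appear. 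Making this dichotomy precise, that is, showing the second degenerate fibre arises exactly in the non-tangent $[2,2,2]$ cases and in no other, is the technical crux and requires a careful reading of the dual graphs produced by the resolutions.
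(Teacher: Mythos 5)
Your outline reproduces the paper's overall architecture --- a case-by-case treatment through Lemma \ref{lem:SingTypes}, with all modifications centred at real points of the boundary or at pairs of non-real conjugate points so that the real loci are untouched --- but the substance of the proof is precisely the constructions you defer, and in the two configurations where the proposition asserts \emph{only} biddability your plan lacks the one idea that makes it work. For $A_{5}^{*}$ and for $A_{6}$ with $T_{p_{\infty}}C\neq L_{\infty}$, the paper does not obtain the fibration by ``blow-ups and elementary transformations until $C'$ becomes a fibre component'' in an unspecified way: it takes the pencil $\mathcal{M}\subset|\mathcal{O}_{\mathbb{P}_{\mathbb{R}}^{2}}(4)|$ generated by $C$ and the non-reduced quartic $3T+L_{\infty}$, where $T$ is the tangent line at the singular point (which meets $C$ with multiplicity $4$ there). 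Resolving the indeterminacy of this pencil and contracting a pair of conjugate non-real conics yields an $\mathbb{A}^{1}$-fibration $\pi:S\rightarrow\mathbb{A}_{\mathbb{R}}^{1}$ on a fake plane in which $C\cap S$ is a \emph{smooth reduced} fibre and $T\cap S\simeq\mathbb{A}_{\mathbb{R}}^{1}$ is the unique degenerate fibre, of multiplicity $3$. Without identifying this pencil (or an equivalent device), biddability in exactly the exceptional cases --- which is the entire content of the first assertion there, since in all other cases rectifiability already implies biddability --- is not established; your ``I expect that \dots cannot be absorbed'' is the statement to be proved, not a proof. Your account of why Proposition \ref{thm:Main-rectif-criterion} is then silent is also slightly off: in the paper's fibration the non-reduced real fibre \emph{is} unique, so condition b) fails not for lack of uniqueness but because its support is $T$ rather than $C$, while condition a) fails simply because a non-reduced fibre exists.

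On the rectifiable side your route diverges from the paper's and again leaves the verification pending. The paper never applies Proposition \ref{thm:Main-rectif-criterion} to the quartics: for $E_{6}$, for $A_{6}$ tangent to $L_{\infty}$, and (after one elementary transformation centred at a conjugate pair) for $D_{4}^{*}$, the map $f$ or $\alpha\circ f$ is an everywhere-defined closed embedding of $\mathbb{R}$-schemes and the Abhyankar--Moh theorem over $\mathbb{R}$ concludes; for $A_{1}^{*}+2A_{1}^{i}$ the curve is carried to a smooth conic and one invokes the degree $\leq3$ case (Proposition \ref{prop:rectif-up3}); in the remaining configurations the explicit maps land $C$ as a fibre of a trivial $\mathbb{A}^{1}$-bundle on $\mathbb{F}_{1}$ or $\mathbb{F}_{0}$, so rectifiability is read off directly rather than through conditions a) and b). Your plan of checking reducedness of all real fibres of a constructed fibration and then invoking Proposition \ref{thm:Main-rectif-criterion} could in principle succeed, but as written it rests on the unverified assertion ``the analysis of the fibres shows\dots'' in each of the seven good configurations. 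So the proposal falls short of a proof on both halves of the statement: the positive constructions are missing, and the mechanism producing the two exceptions is conjectured rather than exhibited.
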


To prove Proposition \ref{prop:rectif-quartic}, we consider below
the different possible cases for $C$ separately, according to the
number of its singular points. 

\subsubsection{Three singular points}

\begin{lem}
\label{lem:Rectif-3Sing-bis}A rational smooth embedding
$f:\mathbb{A}_{\mathbb{R}}^{1}\dashrightarrow\mathbb{A}_{\mathbb{R}}^{2}$
whose associated curve $C\subset\mathbb{P}_{\mathbb{R}}^{2}$ is a
quartic with singularities $A_{1}^*+2A_{1}^{i}$ is rectifiable.
\end{lem}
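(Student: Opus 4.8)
The plan is to reproduce at the level of the quartic the strategy used for the nodal cubic in case b) of Proposition~\ref{prop:rectif-up3}: I would produce a birational diffeomorphism $\alpha$ of $\mathbb{R}^2$ for which $\alpha\circ f$ is a regular closed embedding of $\mathbb{R}$-schemes $\mathbb{A}^1_{\mathbb{R}}\hookrightarrow\mathbb{A}^2_{\mathbb{R}}$, after which the Abhyankar--Moh theorem over $\mathbb{R}$ finishes the argument. By Lemma~\ref{lem:SingTypes}(1a) I may assume that $\mathrm{Sing}(C)=\{p_\infty,p,\overline{p}\}$, where $p_\infty\in C\cap L_\infty$ is a real node of type $A_1^*$ (with non-real conjugate tangents), $\{p,\overline{p}\}$ is a pair of non-real complex conjugate nodes lying in $\mathbb{A}^2_{\mathbb{R}}$, and $C\cap L_\infty=\{p_\infty,p_s\}$ with $p_s$ a smooth real point of $C$ at which $L_\infty$ is tangent to $C$.

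First I would introduce the pencil $\mathcal{P}$ of conics passing through the four points $p_\infty,p_s,p,\overline{p}$. A direct intersection count shows that a general member $Q$ of $\mathcal{P}$ meets $C$ along the base points with total local multiplicity $2+2+2+1=7$, so that $Q\cdot C=8$ leaves a single residual intersection point; as $Q$ varies this residual point sweeps out $C$, which is therefore a rational section of the fibration defined by $\mathcal{P}$. The reducible members of $\mathcal{P}$ are the three pairs of lines joining the four points: the real one $L_\infty\cup[p,\overline{p}]$ and a complex conjugate pair. I then let $\sigma\colon V\to\mathbb{P}^2_{\mathbb{R}}$ be the blow-up of $p_\infty,p_s,p,\overline{p}$, so that $\mathcal{P}$ resolves into a conic bundle $\overline{\pi}\colon V\to\mathbb{P}^1_{\mathbb{R}}$ with three singular fibres, one real and a complex conjugate pair. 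Since $p_\infty,p_s\in L_\infty$ and $\{p,\overline{p}\}$ is non-real, $\sigma$ restricts to an isomorphism over $\mathbb{R}^2$, and because all three nodes of $C$ have been blown up, its proper transform $C'$ is a smooth rational section of $\overline{\pi}$.

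Next I would contract, in each singular fibre, the component whose real locus is disjoint from $\mathbb{R}^2$: the component $\widetilde{L_\infty}$ of the real singular fibre $L_\infty\cup[p,\overline{p}]$, whose real points lie on $L_\infty$, together with a complex conjugate pair of components in the two conjugate singular fibres. The resulting birational morphism contracts only curves whose real loci are contained in $L_\infty$ or are empty, hence is an isomorphism over $\mathbb{R}^2$; and since every degenerate fibre has become reduced and irreducible, it produces a Hirzebruch surface on which $\overline{\pi}$ restricts, on the complement of its boundary, to an $\mathbb{A}^1$-fibration with only reduced fibres isomorphic to $\mathbb{A}^1_{\mathbb{R}}$. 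As in the proof of Proposition~\ref{thm:Main-rectif-criterion}, it follows from \cite{KaMi78} that this complement is the trivial bundle $\mathrm{pr}_2\colon\mathbb{A}^2_{\mathbb{R}}\to\mathbb{A}^1_{\mathbb{R}}$, and that $C'$ becomes a section of it, that is, the graph of a regular function. The induced birational diffeomorphism $\alpha\colon\mathbb{A}^2_{\mathbb{R}}\dashrightarrow\mathbb{A}^2_{\mathbb{R}}$ then turns $\alpha\circ f$ into a regular closed embedding of $\mathbb{R}$-schemes, and $f$ is rectifiable by the Abhyankar--Moh theorem over $\mathbb{R}$ \cite{AM75}.

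The step I expect to be the main obstacle is the real-locus bookkeeping in the contraction: I must check that the negative section of the Hirzebruch surface, the chosen boundary fibre and the section $C'$ are mutually positioned so that $C'$ descends to a genuine section meeting the divisor at infinity in exactly one real point, and in particular that $C'$ does not pass through the nodes of the reducible fibres nor coincide with the negative section. This amounts to verifying that the $A_1^*$ structure at $p_\infty$ (its two branches run to the non-real conjugate points of the exceptional divisor, and are thereby drawn into the affine part) and the tangency of $L_\infty$ to $C$ at $p_s$ (which accounts for the unique real place at infinity) feed correctly into the fibre types, so that the final fibration has all fibres reduced and the Kambayashi--Miyanishi triviality criterion applies.
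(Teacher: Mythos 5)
Your setup is fine up to the construction of the conic bundle: the pencil $\mathcal{P}$ of conics through $p_\infty,p_s,p,\overline{p}$ is indeed a pencil with exactly those four base points, its three reducible members are as you say, and the proper transform $C'$ of $C$ in the blow-up $\sigma\colon V\to\mathbb{P}^2_{\mathbb{R}}$ is a smooth rational section of $\overline{\pi}$. The proof breaks at the contraction step, and the failure is structural rather than a matter of bookkeeping. Because $L_\infty$ contains \emph{two} base points of $\mathcal{P}$, the total transform of $L_\infty$ in $V$ is $\widetilde{L}_\infty\cup E_{p_\infty}\cup E_{p_s}$, where $E_{p_\infty}$ and $E_{p_s}$ are \emph{sections} of $\overline{\pi}$; both must remain in the boundary of the affine part, since they lie over points of $L_\infty$. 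After you contract $\widetilde{L}_\infty$ (and a conjugate pair of components in the non-real fibres), the boundary of the resulting Hirzebruch surface is the union of two sections meeting at one point, so every fibre of $\overline{\pi}$ except the one through that point meets the boundary in two points. Hence $\overline{\pi}$ restricts on the complement to a $\mathbb{C}^{*}$-fibration, not an $\mathbb{A}^{1}$-fibration, and the Kambayashi--Miyanishi criterion \cite{KaMi78} is inapplicable: its hypothesis fails already for the generic fibre. (A minor additional slip: $C'\cdot\widetilde{L}_\infty=1$, not $0$, because the tangency of $L_\infty$ and $C$ at $p_s$ survives a single blow-up of $p_s$; so $C'$ does meet the contracted curve.)

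The deeper problem is that your construction never touches the actual difficulty of this case. The two branches of the $A_1^{*}$ node are non-real conjugate, so $C'$ meets $E_{p_\infty}$ in a pair of non-real conjugate points, and $E_{p_\infty}$ stays in the boundary throughout your construction; consequently the image of $C\cap\mathbb{A}^2_{\mathbb{R}}$ in your final surface still meets the divisor at infinity in three points (one real point and this non-real pair). Thus $\alpha\circ f$ is still only a rational smooth embedding, defined on $\mathbb{A}^1_{\mathbb{R}}$ minus a pair of non-real points exactly as $f$ was, and is never the regular closed embedding needed to invoke Abhyankar--Moh. (One can in fact check, computing with the classes $C'=4H-2e_{p_\infty}-e_{p_s}-2e_{p}-2e_{\overline{p}}$, that your surface is $\mathbb{F}_0$ and that the complement of the two boundary sections \emph{is} isomorphic to $\mathbb{A}^2_{\mathbb{R}}$ --- but via the second ruling, for which one boundary curve is a fibre and the other a section; the curve is then a multisection with the same three places at infinity, so nothing has been rectified.) The missing idea, which is the heart of the paper's argument, is to pull the two non-real ends of $C$ off the line at infinity by elementary transformations centred at \emph{infinitely near} non-real points: blow up the pair of branch directions $p_{\infty,1},\overline{p}_{\infty,1}$ of the node on the exceptional divisor $E_\infty$ of $p_\infty$ and contract the proper transforms of the corresponding tangent lines $T_\infty,\overline{T}_\infty$, together with the analogous transformations at the nodes $p,\overline{p}$. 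After this, $E_\infty$ is a real $(-1)$-section disjoint from the transform of $C$; contracting it maps $C$ onto a smooth conic tangent to the image of $L_\infty$, and Proposition \ref{prop:rectif-up3} (the degree $\leq 3$ case, via Abhyankar--Moh) concludes.
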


\begin{proof}
The curve $C$ has a real singular point $p_{\infty}\in C\cap L_{\infty}$
of type $A_{1}^{*}$ with pair of non-real complex conjugate tangents
$T_{\infty}$ and $\overline{T}_{\infty}$, and $L_{\infty}\cap C=\{p_{\infty},p_{0}\}$
where $p_{0}$ is a smooth real point of $C$, at which $C$ intersects
$L_{\infty}$ with multiplicity $2$. We denote by $p$ and $\overline{p}$
the pair of non-real complex conjugate singular points of $C$. Since
$T_{\infty}$ intersects $C$ at $p_{\infty}$ with multiplicity $3$,
it also intersects $C$ transversally at a point $q$ different from
$p$ and $\overline{p}$, and $\overline{T}_{\infty}$ then intersects
$C$ transversally at $\overline{q}$. We let $F=[p_{\infty},p]$
and $\overline{F}=[p_{\infty},\overline{p}]$ be the pair of non-real
complex conjugate lines through $p_{\infty}$ and $p$ and $p_{\infty}$
and $\overline{p}$ respectively. Note that the pairs of lines $\{T_{\infty},\overline{T}_{\infty}\}$
and $\{F,\overline{F}\}$ are different. Let $\tau_{1}:V_{1}=\mathbb{F}_{1}\rightarrow\mathbb{P}_{\mathbb{R}}^{2}$
be the blow-up the point $p_{\infty}$ with exceptional divisor $E_{\infty}$,
and let $\rho_{1}:V_{1}=\mathbb{F}_{1}\rightarrow\mathbb{P}^{1}$
be the induced $\mathbb{P}^{1}$-bundle structure with exceptional
section $E_{\infty}$. The proper transform of $C$ intersects $E_{\infty}$
transversally in a pair of non-real complex conjugate points $p_{\infty,1}$
and $\overline{p}_{\infty,1}$. The proper transforms of $T_{\infty}$
and $\overline{T}_{\infty}$ are non-real complex conjugate fibers
of $\rho_{1}$ which intersect $E_{\infty}$ 
transversally in $p_{\infty,1}$ and $\overline{p}_{\infty,1}$,
respectively, and also intersect the proper transform of $C$ at the
points $q$ and $\overline{q}$ respectively. Let $\tau_{2}:V_{1}\dashrightarrow 
V_{2}
=\mathbb{F}_{1}$
be the birational map defined over $\mathbb{R}$ consisting of the
blow-up of $p_{\infty,1}$, $\overline{p}_{\infty,1}$, $p$ and $\overline{p}$
followed by the contraction of the proper transforms of $T_{\infty}$,
$\overline{T}_{\infty}$, $F$ and $\overline{F}$ (See Figure \ref{fig:threesing-forgotten}).

\begin{figure}
\input{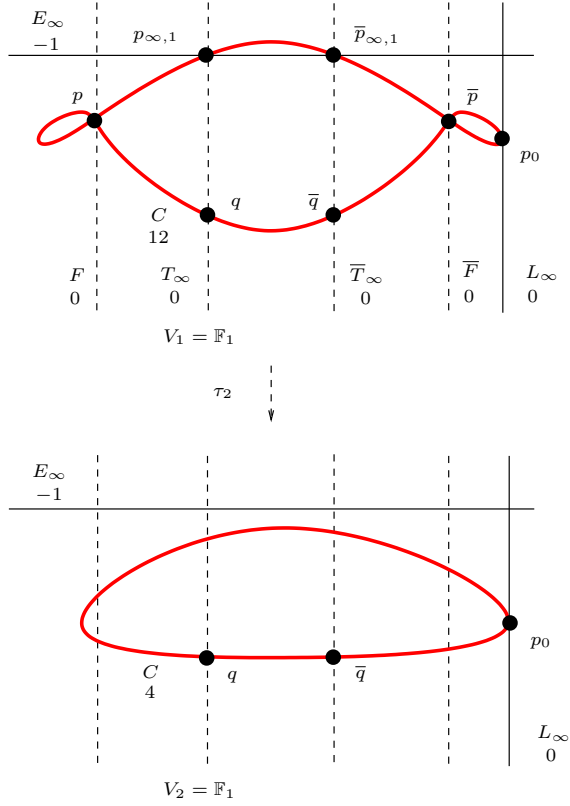} 
\caption{
Three singular points $A_{1}^*+2A_{1}^{i}$.}  
\label{fig:threesing-forgotten}
\end{figure}

The proper transform of $E_{\infty}$ is a section of the $\mathbb{P}^{1}$-bundle
structure $\rho'_{1}:\mathbb{F}_{1}\rightarrow\mathbb{P}^{1}$ with
self-intersection $-1$ while the proper transform of $C$ is a smooth
$2$-section of $\rho_{1}'$ with self-intersection $4$ disjoint
from $E_{\infty}$. The images of $C$ and $L_{\infty}$ by the contraction
$\tau_{3}:V_{2}\rightarrow\mathbb{P}_{\mathbb{R}}^{2}$ of $E_{\infty}$
are then a smooth conic $C'$ and its tangent line $L_{\infty}'$
at the real point $p_{0}$. The composition $\tau_{3}\circ\tau_{2}\circ\tau_{1}^{-1}:\mathbb{P}_{\mathbb{R}}^{2}\dashrightarrow\mathbb{P}_{\mathbb{R}}^{2}$
is a birational diffeomorphism which maps $C\cup L_{\infty}$ onto
$C'\cup L_{\infty}'$ and restricts to a birational diffeomorphism
$\alpha:\mathbb{A}_{\mathbb{R}}^{2}=\mathbb{P}_{\mathbb{R}}^{2}\setminus L_{\infty}\dashrightarrow\mathbb{P}_{\mathbb{R}}^{2}\setminus L_{\infty}'=\mathbb{A}_{\mathbb{R}}^{2}$.
The composition $\alpha\circ f:\mathbb{A}_{\mathbb{R}}^{1}\dashrightarrow\mathbb{A}_{\mathbb{R}}^{2}$
is then rectifiable by Proposition \ref{prop:rectif-up3} and hence, $f$ is rectifiable.
\end{proof}

\begin{lem}
\label{lem:Rectif-3Sing}A rational smooth embedding
$f:\mathbb{A}_{\mathbb{R}}^{1}\dashrightarrow\mathbb{A}_{\mathbb{R}}^{2}$
whose associated curve $C\subset\mathbb{P}_{\mathbb{R}}^{2}$ is a
quartic with singularities $A_{2}+2A_{1}^{i}$ or $A_{2}+2A_{2}^{i}$
is rectifiable.
\end{lem}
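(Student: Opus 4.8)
The plan is to treat this exactly as Lemma~\ref{lem:Rectif-3Sing-bis}: produce an explicit birational endomorphism of $\mathbb{P}_{\mathbb{R}}^{2}$ built from blow-ups and contractions all of whose centers and contracted curves are either non-real or supported over points of $L_{\infty}$, hence restricting to a birational diffeomorphism of $\mathbb{A}_{\mathbb{R}}^{2}=\mathbb{P}_{\mathbb{R}}^{2}\setminus L_{\infty}$, and which carries $C$ to a curve of degree $\leq 3$; rectifiability then follows from Proposition~\ref{prop:rectif-up3}. By Lemma~\ref{lem:SingTypes}(1b), the starting data is a real cusp $p_{\infty}\in C\cap L_{\infty}$ of type $A_{2}$ whose tangent $T_{\infty}$ is distinct from $L_{\infty}$, a pair $\{p,\overline{p}\}$ of non-real conjugate double points (ordinary nodes in the $2A_{1}^{i}$ case, ordinary cusps in the $2A_{2}^{i}$ case) lying off $L_{\infty}$, and $C\cap L_{\infty}=\{p_{\infty},q,\overline{q}\}$ with $q,\overline{q}$ non-real conjugate smooth points.

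First I would blow up the real point $p_{\infty}$, obtaining $\tau_{1}\colon\mathbb{F}_{1}\to\mathbb{P}_{\mathbb{R}}^{2}$ with exceptional section $E_{\infty}$ and the ruling $\rho_{1}$ induced by the projection from $p_{\infty}$. The proper transform $C^{(1)}$ is a $2$-section of $\rho_{1}$; since the cusp is unibranch it meets $E_{\infty}$ in the single real point $p_{\infty,1}$ corresponding to the cuspidal direction, and its only singularities are the non-real conjugate pair $p,\overline{p}$, lying on the non-real conjugate fibers $F^{(1)}=[p_{\infty},p]^{(1)}$ and $\overline{F}^{(1)}=[p_{\infty},\overline{p}]^{(1)}$, while the proper transform of $L_{\infty}$ is a real fiber meeting $C^{(1)}$ at $q,\overline{q}$. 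Next I would resolve the non-real singularities by the elementary transformations centered at $p$ and $\overline{p}$, blowing them up and contracting $F^{(1)},\overline{F}^{(1)}$; in the $2A_{2}^{i}$ case each conjugate cusp requires one extra blow-up along its non-real cuspidal tangent before the corresponding fiber can be contracted. As all of these centers and contracted curves are non-real, the composite is a birational diffeomorphism of $\mathbb{A}_{\mathbb{R}}^{2}$ and the proper transform of $C$ becomes a smooth $2$-section. One then reorganizes the completion so that $C$ becomes disjoint from the exceptional section and contracts the latter, which lies over $L_{\infty}$, to descend to a model in which $C$ has degree $\leq 3$; Proposition~\ref{prop:rectif-up3} finishes the argument.

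The main obstacle is precisely the feature distinguishing this lemma from Lemma~\ref{lem:Rectif-3Sing-bis}: for a node $A_{1}^{*}$ the two tangent branches at $p_{\infty}$ are non-real, so there both fibers through the tangent directions could be freely contracted; here the cusp has a single \emph{real} tangent, so $C^{(1)}$ is tangent to the real section $E_{\infty}$ at the real point $p_{\infty,1}$, and the real fiber $T_{\infty}^{(1)}$ can never be contracted, since this would collapse the real affine line $T_{\infty}\cap\mathbb{A}_{\mathbb{R}}^{2}$ and destroy the diffeomorphism property. The same reality constraint rules out the naive quadratic Cremona transformation based at $p_{\infty},p,\overline{p}$, because it contracts the real line $[p,\overline{p}]$, which has non-empty trace in $\mathbb{A}_{\mathbb{R}}^{2}$. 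The delicate point is therefore to perform the degree reduction, and in particular to separate $C$ from the real section $E_{\infty}$, using only non-real contractions together with contractions supported over $L_{\infty}$; I expect this to require a carefully chosen compensating pair of non-real elementary transformations on $E_{\infty}$, so that it is restored to a $(-1)$-curve lying in $L_{\infty}$ before the final contraction, exactly as the figure accompanying Lemma~\ref{lem:Rectif-3Sing-bis} records the self-intersections at each stage. An alternative I would pursue if the direct reduction proves unwieldy is to first establish biddability by exhibiting a pencil having $C$ as a member and inducing an $\mathbb{A}^{1}$-fibration with a single non-reduced fiber on a model of $\mathbb{R}^{2}$, and then deduce rectifiability from the criterion of Proposition~\ref{thm:Main-rectif-criterion}(b).
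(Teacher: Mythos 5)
Your framework coincides with the paper's through Step 1: blow up the real cusp $p_{\infty}$, then perform the elementary transformations centered at the non-real conjugate double points $p,\overline{p}$ by blowing them up and contracting the proper transforms of the lines $[p_{\infty},p]$ and $[p_{\infty},\overline{p}]$, after which $C$ is a smooth $2$-section of the ruling, tangent to the real section $E_{\infty}$ at the real point $p_{\infty,1}$. You also correctly diagnose why this case is harder than Lemma~\ref{lem:Rectif-3Sing-bis}: the cuspidal tangent direction is real, so $C$ is glued to the real boundary at $p_{\infty,1}$, and no real curve meeting $\mathbb{A}_{\mathbb{R}}^{2}$ may ever be contracted. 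But precisely at this point your argument ceases to be a proof: ``one then reorganizes the completion so that $C$ becomes disjoint from the exceptional section'' and ``I expect this to require a carefully chosen compensating pair of non-real elementary transformations'' describe what must be achieved, not how. The missing content is the crux of the paper's proof: one takes the pair of non-real complex conjugate \emph{conics} $Q$ and $\overline{Q}$ passing through $p_{\infty}$ with tangent direction $p_{\infty,1}$, through $p$, $\overline{p}$, and through $q$ (resp.\ $\overline{q}$). Their proper transforms are sections of the ruling; blowing up $q$, $\overline{q}$ and the real point $p_{\infty,1}$ turns them into disjoint $(-1)$-curves not meeting $C$, and contracting them leaves $C$ with self-intersection $1$, disjoint from $L_{\infty}$ and meeting $E_{\infty}$ transversally at the single real point $p_{\infty,2}=E_{\infty}\cap E_{\infty,1}$. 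A last blow-up at $p_{\infty,2}$ followed by contraction of $E_{\infty}$ and $L_{\infty}$ (both lying in the boundary, so harmless for the diffeomorphism property) lands in $\mathbb{F}_{1}$ with $C$ a fiber, which gives rectifiability directly. Without exhibiting these auxiliary conics, or an equally explicit substitute, the separation of $C$ from the real boundary is not established. Your fallback via biddability and Proposition~\ref{thm:Main-rectif-criterion} is likewise only named: to apply it you would still need to verify hypothesis a) or b) on the real members of the associated pencil, which demands the same explicit control you have not provided.

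A secondary, more local error: your claim that in the $2A_{2}^{i}$ case ``each conjugate cusp requires one extra blow-up along its non-real cuspidal tangent before the corresponding fiber can be contracted'' is false. Since $C\cdot[p_{\infty},p]=4$ and the local intersection multiplicities at $p_{\infty}$ and $p$ are each at least $2$, they both equal $2$; hence the line is not the cuspidal tangent at $p$, one blow-up of $p$ already makes the proper transform of $C$ smooth and disjoint from the proper transform of $[p_{\infty},p]$, and the latter is then a $(-1)$-curve that can be contracted immediately. This is why the paper handles $A_{2}+2A_{1}^{i}$ and $A_{2}+2A_{2}^{i}$ in one uniform step; the only difference is that afterwards $C$ meets the new non-real fibers tangentially at one point instead of transversally at two.
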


\begin{proof}
In both cases, $C$ has a unique real ordinary cusp $p_{\infty}\in C\cap L_\infty$
whose tangent line is different from $L_\infty$. So $(C\cdot L_\infty)_{p_\infty}=2$ and then,
by Lemma \ref{lem:RatEmbed-from-projcurve}, $(C\cap L_{\infty})\setminus\{p_{\infty}\}$  
consists of a pair $\{q,\overline{q}\}$ of non-real complex conjugate
smooth points $q$ and $\overline{q}$ of $C$. We denote by $\{p,\overline{p}\}$
the pair of non-real complex conjugate singular points of $C$. To
show that $f:\mathbb{A}_{\mathbb{R}}^{1}\dashrightarrow\mathbb{A}_{\mathbb{R}}^{2}$
is rectifiable, we proceed as follows. 

Step 1) (See Figure \ref{fig:threesing1}) Let $\tau_{1}:\mathbb{F}_{1}\rightarrow\mathbb{P}_{\mathbb{R}}^{2}$
be the blow-up the point $p_{\infty}$ with exceptional divisor $E_{\infty}$,
and let $\rho_{1}:V_{1}=\mathbb{F}_{1}\rightarrow\mathbb{P}^{1}$
be the induced $\mathbb{P}^{1}$-bundle structure with exceptional
section $E_{\infty}$. The proper transforms of the non-real conjugate
lines $L=[p_{\infty},p]$ and $\overline{L}=[p_{\infty},\overline{p}]$
are fibers of $\rho_{1}$ which intersect the proper transform of
$C$ in the double points $p$ and $\overline{p}$ respectively. On
the other hand $C$ intersects the proper transform of $L_{\infty}$
transversally in the two conjugate points $q$ and $\overline{q}$,
and it intersects $E_{\infty}$ with multiplicity $2$ at a smooth
real point $p_{\infty,1}$ supported on $E_{\infty}\setminus\left\{ L_{\infty},L,\overline{L}\right\} $.
Let $\tau_{2}:V_{1}\dashrightarrow V_{2}=\mathbb{F}_{1}$ be the birational
map defined over $\mathbb{R}$ consisting of the blow-up of the pair
of points $p$ and $\overline{p}$ followed by the contraction of
the proper transforms of $L$ and $\overline{L}$. The proper transform
of $E_{\infty}$ is a section of the $\mathbb{P}^{1}$-bundle structure
$\rho_{1}':\mathbb{F}_{1}\rightarrow\mathbb{P}^{1}$ with self-intersection
$1$ while the proper transform of $C$ is a smooth $2$-section of
$\rho_{1}'$ with self-intersection $4$, still intersecting $E_{\infty}$
with multiplicity $2$ in $p_{\infty,1}$. 

\begin{figure}
\input{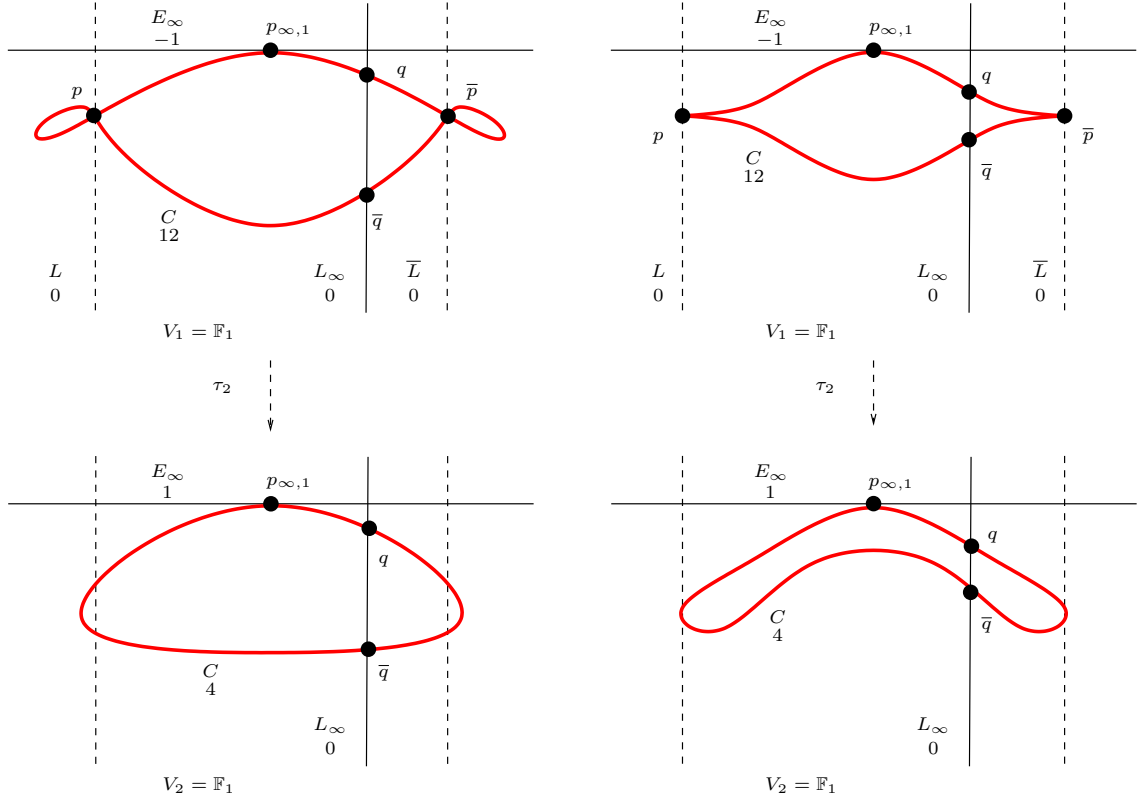} 
\caption{Three singular points, step 1: case $A_{2}+2A_{1}^{i}$ on the left and $A_{2}+2A_{2}^{i}$ on the right.}  
\label{fig:threesing1}
\end{figure}

Step 2) The proper transforms $Q$ and $\overline{Q}$ in $V_{2}$
of the pair of non-real complex conjugate conics in $\mathbb{P}_{\mathbb{R}}^{2}$
passing through $p_{\infty},p_{\infty,1},q,p,\overline{p}$ and $p_{\infty},p_{\infty,1},\overline{q},p,\overline{p}$
respectively are sections of $\rho_{1}'$ passing through $q$ and
$p_{\infty,1}$ and $\overline{q}$ and $p_{\infty,1}$ respectively.
Let $\tau_{3}:V_{2}\dashrightarrow V_{3}$ be the birational map defined
over $\mathbb{R}$ consisting of the blow-up of $q$ and $\overline{q}$,
the blow-up of the $\mathbb{R}$-rational point $p_{\infty,1}$ with
exceptional divisor $E_{\infty,1}$, followed by the contraction of
the proper transforms of $Q$ and $\overline{Q}$. The proper transforms
in $V_{3}$ of $L_{\infty}$, $E_{\infty}$, $E_{\infty,1}$ have
self-intersections $-2$, $0$ and $1$ respectively, the proper transform
of $C$ has self-intersection $1$, it intersects $E_{\infty}$ transversally
at the real point $p_{\infty,2}=E_{\infty}\cap E_{\infty,1}$ and
it does no longer intersect $L_{\infty}$. 

\begin{figure}
\input{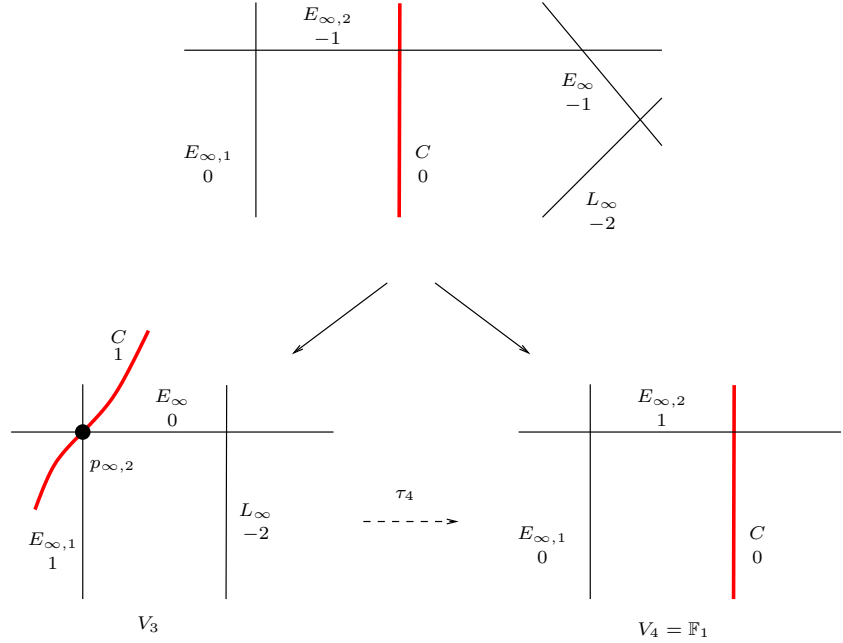} 
\caption{Three singular points, step 3: case $A_{2}+2A_{1}^{i}$ or $A_{2}+2A_{2}^{i}$.}  
\label{fig:threesing3}
\end{figure}

Step 3) (See Figure \ref{fig:threesing3}) Finally, let $\tau_{4}:V_{3}\dashrightarrow V_{4}$ be the
birational map defined over $\mathbb{R}$ obtained by blowing up $p_{\infty,2}$
with exceptional divisor $E_{\infty,2}$ and then contracting successively
the proper transforms of $E_{\infty}$ and $L_{\infty}$. The proper
transforms of $E_{\infty,1}$ and $C$ have self-intersection $0$ while the proper transform of
$E_{\infty,2}$ has self-intersection $1$.  Counting the number of points blown-up and curves contracted,
we see that $\mathrm{Pic}(V_{4})\simeq\mathbb{Z}^{2}$. It follows
that $V_{4}\simeq\mathbb{F}_{1}$ on which $C$ and $E_{\infty,1}$
and $E_{\infty,2}$ are respectively fibers and a section of the $\mathbb{P}^1$-bundle structure $\rho_{1}:\mathbb{F}_{1}\rightarrow\mathbb{P}_{\mathbb{R}}^{1}$.
By construction, the composition $\tau_{4}\circ\tau_{3}\circ\tau_{2}\circ\tau_{1}^{-1}:\mathbb{P}_{\mathbb{R}}^{2}\dashrightarrow\mathbb{F}_{1}$
restricts to a birational diffeomorphism 
\[
\alpha:\mathbb{A}_{\mathbb{R}}^{2}=\mathbb{P}_{\mathbb{R}}^{2}\setminus L_{\infty}\dashrightarrow\mathbb{F}_{1}\setminus(E_{\infty,1}\cup E_{\infty,2})\simeq\mathbb{A}_{\mathbb{R}}^{2}
\]
such that $\alpha\circ f:\mathbb{A}_{\mathbb{R}}^{1}\dashrightarrow\mathbb{A}_{\mathbb{R}}^{2}$
coincides with the closed immersion of $\mathbb{A}_{\mathbb{R}}^{1}$
as a real fiber of the projection $\mathrm{pr}_{2}=\rho_1|_{\mathbb{A}_{\mathbb{R}}^{2}}:\mathbb{A}_{\mathbb{R}}^{2}\rightarrow\mathbb{A}_{\mathbb{R}}^{1}$.
This shows that $f$ is rectifiable. 
\end{proof}

\subsubsection{Two singular points }
\begin{lem}
A rational smooth embedding $f:\mathbb{A}_{\mathbb{R}}^{1}\dashrightarrow\mathbb{A}_{\mathbb{R}}^{2}$
whose associated curve $C\subset\mathbb{P}_{\mathbb{R}}^{2}$ is a
quartic with singularities $A_{2}+A_{3}^{*}$ or $A_{4}+A_{1}^{*}$
is rectifiable. 
\end{lem}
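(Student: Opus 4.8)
The plan is to follow the strategy of Lemmas~\ref{lem:Rectif-3Sing-bis} and \ref{lem:Rectif-3Sing}: starting from the pair $(\mathbb{P}_{\mathbb{R}}^{2}, C\cup L_{\infty})$, I would produce a birational self-map of $\mathbb{P}_{\mathbb{R}}^{2}$ defined over $\mathbb{R}$, built from blow-ups and elementary transformations whose centers are either real points lying on the successive total transforms of $L_{\infty}$, or pairs of non-real complex conjugate points, so that the induced birational self-map of $\mathbb{A}_{\mathbb{R}}^{2}=\mathbb{P}_{\mathbb{R}}^{2}\setminus L_{\infty}$ is a birational diffeomorphism. By Lemma~\ref{lem:SingTypes}, in both configurations the two singular points $p_{0},p_{1}$ of $C$ are real, lie on $C\cap L_{\infty}$ with tangent line different from $L_{\infty}$, so that $(C\cdot L_{\infty})_{p_{0}}=(C\cdot L_{\infty})_{p_{1}}=2$ and $C\cap L_{\infty}=\{p_{0},p_{1}\}$. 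The aim is to arrive at a smooth conic $C'$ meeting the image $L_{\infty}'$ of $L_{\infty}$ in a single real point, whence rectifiability follows from Proposition~\ref{prop:rectif-up3}.

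For the configuration $A_{2}+A_{3}^{*}$, I would first blow up the ordinary cusp $p_{0}$ and take $\rho_{1}\colon V_{1}=\mathbb{F}_{1}\rightarrow\mathbb{P}^{1}$ to be the $\mathbb{P}^{1}$-bundle induced by the projection from $p_{0}$; the proper transform of $L_{\infty}$ becomes a fiber, while the proper transform $C^{(1)}$ of $C$ becomes a $2$-section which is smooth at the point of the exceptional section $E_{0}$ resolving the cusp, tangent to $E_{0}$ there, and still carries the tacnode $A_{3}^{*}$ at the image of $p_{1}$, whose two non-real conjugate branches share a common real tangent. I would then resolve the tacnode by the required two infinitely near blow-ups and eliminate the non-real conjugate tangent directions of its branches by an elementary transformation centered at a pair of non-real conjugate points and contracting the two conjugate fibers carrying these tangents, exactly as the pair $(T_{\infty},\overline{T}_{\infty})$ is treated in Lemma~\ref{lem:Rectif-3Sing-bis}; this detaches $C$ from the relevant negative section. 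The residual real tangency of $C$ to the exceptional section is then straightened by the pair-of-conjugate-conics manoeuvre of Step~2 of Lemma~\ref{lem:Rectif-3Sing}. Contracting the negative section recovers $\mathbb{P}_{\mathbb{R}}^{2}$ with $C$ mapped to a smooth conic and $L_{\infty}$ to its tangent at the surviving real point, so that Proposition~\ref{prop:rectif-up3} applies.

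The configuration $A_{4}+A_{1}^{*}$ is handled symmetrically: I would begin by blowing up the node $p_{1}$ of type $A_{1}^{*}$, so that the projection from $p_{1}$ realizes $C$ as a $2$-section meeting the exceptional section in the pair of non-real conjugate points cut out by the two conjugate nodal tangents; an elementary transformation centered at this pair and contracting the two conjugate tangent fibers, again as in Lemma~\ref{lem:Rectif-3Sing-bis}, detaches $C$ from that section. It then remains to resolve the real double cusp $A_{4}$ at the image of $p_{0}$, which demands two infinitely near blow-ups along $L_{\infty}$ followed by contractions bringing the ruled surface back to $\mathbb{F}_{1}$, after which the same contraction-to-a-conic argument concludes.

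The main obstacle will be the bookkeeping forced by the two-step local resolutions of the tacnode $A_{3}^{*}$ and of the double cusp $A_{4}$: each produces intermediate real singular or tangency points on the exceptional locus that must be cleared while (i) keeping every center either real-on-$L_{\infty}$ or a non-real conjugate pair, so as not to disturb $\mathbb{R}^{2}$, (ii) keeping the boundary divisor SNC with no imaginary loop, and (iii) controlling the self-intersection numbers so that the final surface is again a Hirzebruch surface $\mathbb{F}_{1}$ whose $(-1)$-section can be contracted to $\mathbb{P}_{\mathbb{R}}^{2}$. Verifying that at the end $C$ is a smooth $2$-section disjoint from the $(-1)$-section, equivalently that all its singularities and its excess intersection with the section have been absorbed, is the crux; once this is checked, rectifiability is immediate from Proposition~\ref{prop:rectif-up3}, or, should $C$ instead become a coordinate fiber, from the Abhyankar--Moh Theorem over $\mathbb{R}$ as in Lemma~\ref{lem:Rectif-3Sing}.
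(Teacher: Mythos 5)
Your outline for the two cases is not symmetric in quality, and the $A_{2}+A_{3}^{*}$ case contains a genuine error. You correctly record that the two non-real conjugate branches of the tacnode share a \emph{common real tangent}, but the operation you then propose --- ``eliminate the non-real conjugate tangent directions of its branches by an elementary transformation \dots contracting the two conjugate fibers carrying these tangents'' --- is inconsistent with that very fact: there are no conjugate tangent fibers at an $A_{3}^{*}$ point. The manoeuvre of Lemma \ref{lem:Rectif-3Sing-bis} applies to $(T_{\infty},\overline{T}_{\infty})$ precisely because there the singular point is itself the center of projection and its tangent cone consists of two conjugate \emph{lines}, which become fibers; here the projection center is the cusp $p_{0}$, the tacnodal tangent is a single real line not through $p_{0}$, and after the two blow-ups resolving the tacnode the conjugate points $q,\overline{q}$ of $C$ on the last exceptional curve lie on one real degenerate fiber, so no elementary transformation of the proposed kind exists. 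Worse, the endgame then fails: the only admissible contractions in that fiber (the transforms of $L_{\infty}$ and $E_{1,1}$, the third component being excluded since contracting it would re-create the node on $C$) raise the self-intersection of the boundary section $E_{0}$ from $-1$ to $+1$, while $C$ stays tangent to $E_{0}$; the $(-1)$-section of the resulting $\mathbb{F}_{1}$ then lies in the \emph{affine} locus with real points there, so ``contracting the negative section'' is not a birational diffeomorphism of the affine planes. Removing this residual tangency and producing a contractible boundary section is exactly the point where an extra device is unavoidable; you gesture at the conjugate-conics manoeuvre but explicitly defer its verification, which you yourself call the crux --- and that verification is the entire content of the lemma.

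For comparison, the paper does not use fibered elementary transformations here at all: it takes the minimal log-resolution of $(\mathbb{P}_{\mathbb{R}}^{2},C\cup L_{\infty})$ without imaginary loops and contracts in one step $L_{\infty}$, selected exceptional curves, and the proper transforms of a pair of non-real conjugate conics through five of the assigned (infinitely near) points --- these conics are the extra disjoint $(-1)$-curves, disjoint from $C$, that make a descent to a minimal surface possible --- landing on $\mathbb{F}_{1}$ (case $A_{2}+A_{3}^{*}$), resp. $\mathbb{F}_{0}$ (case $A_{4}+A_{1}^{*}$), with $C$ a full fiber, whence $\alpha\circ f$ is already linear. Your $A_{4}+A_{1}^{*}$ outline, on the other hand, can be completed along the lines you indicate: projecting from the $A_{1}^{*}$ node does make its two conjugate tangents into fibers, the Lemma \ref{lem:Rectif-3Sing-bis} transformation yields a smooth $2$-section on $\mathbb{F}_{3}$ disjoint from the $(-3)$-section, and after resolving the $A_{4}$ by the two real blow-ups at $p_{0}$ and $p_{0,1}$ one may contract the transforms of $L_{\infty}$ and $E_{0,1}$, which returns the boundary section to self-intersection $-1$; contracting it gives a smooth conic tangent to a line, and Proposition \ref{prop:rectif-up3} concludes. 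But this bookkeeping (in particular that the surviving boundary is a $(-1)$-section plus a $0$-fiber, with $C$ disjoint from the section) is exactly what you leave unchecked, so even this half remains a plan rather than a proof.
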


\begin{proof}
We deal with each case separately (see Figure \ref{fig:twosing}).

1) $C$ has a real ordinary cusp $p_{0}$ and a real tacnode $p_{1}$ with real tangent, whose inverse image in the normalization of $C$ consists of a pair $(q,\overline{q})$ of non-real complex conjugate points. Furthermore the tangent to $C$ at $p_0$ and $p_1$ is different from $L_\infty$. By Lemma \ref{lem:RatEmbed-from-projcurve}, we have $C\cap L_{\infty}=\{p_{0},p_{1}\}$. 
A minimal log-resolution $\tau:(V,B)\rightarrow(\mathbb{P}_{\mathbb{R}}^{2},C\cup L_{\infty})$
of the pair $(\mathbb{P}_{\mathbb{R}}^{2},C\cup L_{\infty})$ for which $B$ has no imaginary loop is obtained
by blowing-up on the one hand the real point $p_{0}$ with exceptional
divisor $E_{0,1}$, then the real intersection point $p_{0,1}\neq L_\infty\cap E_{0,1}$ of
the proper transform of $C$ with $E_{0,1}$, with exceptional divisor
$E_{0,2}$ and then the real intersection point $p_{0,2}$ of the
proper transform of $C$ with $E_{0,2}$, with exceptional divisor
$E_{0,3}$, and on the other hand the real point $p_{1}$ with exceptional
divisor $E_{1,1}$, the real intersection point $p_{1,1}\neq L_\infty\cap E_{1,1}$ of the
proper transform of $C$ with $E_{1,1}$, with exceptional divisor
$E_{1,2}$ and then the pair of non-real complex conjugate intersection
points $(q,\overline{q})$ of the proper transform of $C$ with $E_{1,2}$,
with respective exceptional divisors $F$ and $\overline{F}$. The
proper transform of $C$ in $V$ is a smooth rational curve with self-intersection
$0$, which intersects $E_{0,3}$, $F$ and $\overline{F}$ transversally.
The proper transforms in $V$ of the non-real complex conjugate smooth
conics $Q$ and $\overline{Q}$ passing through $p_{0},p_{0,1},p_{1},p_{1,1},q$
and $p_{0},p_{0,1},p_{1},p_{1,1},\overline{q}$ respectively are disjoint
$(-1)$-curves which do not intersect the proper transform of $C$.
The contraction $\tau_{1}:V\rightarrow V_{1}$ of $Q\cup\overline{Q}\cup L_{\infty}\cup E_{1,1}\cup E_{0,1}\cup E_{1,2}$ is a birational morphism defined over $\mathbb{R}$. The images in
$V_{1}$ of $C$ and $E_{0,2}$ have self-intersection $0$ while the image of $E_{0,3}$ has self-intersection
$1$.  We conclude in the same way as in the proof of Lemma \ref{lem:Rectif-3Sing}
that $V_{1}\simeq\mathbb{F}_{1}$ on which $C$, $E_{0,2}$ and $E_{0,3}$
are respectively fibers and a section of $\rho_{1}:\mathbb{F}_{1}\rightarrow\mathbb{P}_{\mathbb{R}}^{1}$,
that $\tau_{1}\circ\tau^{-1}:\mathbb{P}_{\mathbb{R}}^{2}\dashrightarrow\mathbb{F}_{1}$
restricts to a birational diffeomorphism 
\[
\alpha:\mathbb{A}_{\mathbb{R}}^{2}=\mathbb{P}_{\mathbb{R}}^{2}\setminus L_{\infty}\dashrightarrow\mathbb{F}_{1}\setminus(E_{0,3}\cup E_{0,2})\simeq\mathbb{A}_{\mathbb{R}}^{2}
\]
for which $\alpha\circ f:\mathbb{A}_{\mathbb{R}}^{1}\dashrightarrow\mathbb{A}_{\mathbb{R}}^{2}$
is a closed embedding of $\mathbb{A}_{\mathbb{R}}^{1}$
as a real fiber of the projection $\mathrm{pr}_{2}:\mathbb{A}_{\mathbb{R}}^{2}\rightarrow\mathbb{A}_{\mathbb{R}}^{1}$. 

\begin{figure}
\input{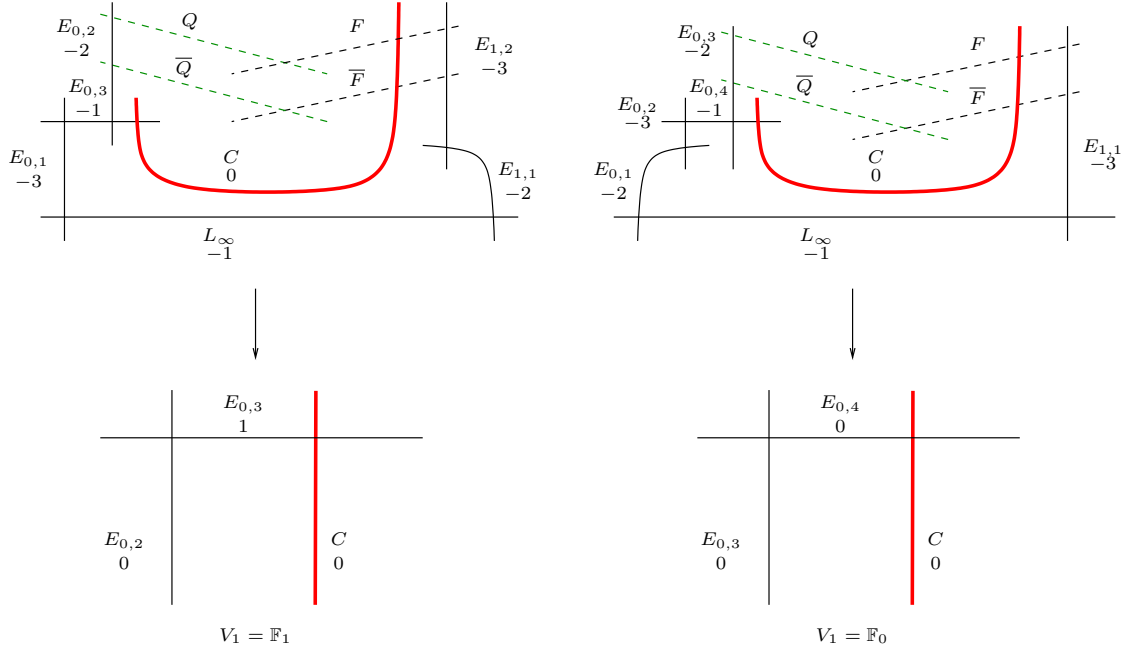} 
\caption{Two singular points: case $A_{2}+A_{3}^{*}$ on the left and $A_{4}+A_{1}^{*}$ on the right.}  
\label{fig:twosing}
\end{figure}

2) $C$ has real double cusp $p_{0}$ and a real ordinary double point
$p_{1}$ with non-real complex conjugate tangents. Furthermore, the tangent line to $C$ at each of them is different from $L_\infty$.
As in the previous case, we have $C\cap L_{\infty}=\{p_{0},p_{1}\}$.  A minimal log-resolution
$\tau:(V,B)\rightarrow(\mathbb{P}_{\mathbb{R}}^{2},C\cup L_{\infty})$
of the pair $(\mathbb{P}_{\mathbb{R}}^{2},C\cup L_{\infty})$ for
which $B$ has no imaginary loop is obtained by blowing-up on the
one hand the real point $p_{0}$ with exceptional divisor $E_{0,1}$,
then the real intersection point $p_{0,1}\neq E_{0,1}\cap L_{\infty}$ of the proper transform
of $C$ with $E_{0,1}$, with exceptional divisor $E_{0,2}$, then
the real intersection point $p_{0,2}$ of the proper transform of
$C$ with $E_{0,2}$ with exceptional divisor $E_{0,3}$ and then
the real intersection point $p_{0,3}$ of the proper transform of
$C$ with $E_{0,3}$ with exceptional divisor $E_{0,4}$, and on the
other the real point $p_{1}$ with exceptional divisor $E_{1,1}$
and then the pair of non-real complex conjugate intersection points
$(q,\overline{q})$ of the proper transform of $C$ with $E_{1,1}$,
with respective exceptional divisors $F$ and $\overline{F}$. The
proper transform of $C$ in $V$ is a smooth rational curve with self-intersection
$0$, which intersects $E_{0,4}$, $F$ and $\overline{F}$ transversally.
The proper transforms in $V$ of the non-real complex conjugate smooth
conics $Q$ and $\overline{Q}$ passing through $p_{0},p_{0,1},p_{0,2},p_{1},q$
and $p_{0},p_{0,1},p_{0,2},p_{1},\overline{q}$ respectively are disjoint
$(-1)$-curves which do not intersect the proper transform of $C$.
The contraction $\tau_{1}:V\rightarrow V_{1}$ of $Q\cup\overline{Q}\cup L_{\infty}\cup E_{0,1}\cup E_{1,1}\cup E_{0,2}$ is a birational morphism defined over $\mathbb{R}$. The images in
$V_{1}$ of $C$, $E_{0,3}$ and $E_{0,4}$ all have self-intersection
$0$. Counting the number of points blown-up and curves contracted, we conclude that $V_{1}\simeq\mathbb{F}_{0}=\mathbb{P}_{\mathbb{R}}^{1}\times\mathbb{P}_{\mathbb{R}}^{1}$ in which the images of $C$, $E_{0,3}$ and $E_{0,4}$ are respectively fibers and a section of $\mathrm{pr}_{2}:\mathbb{F}_{0}\rightarrow\mathbb{P}_{\mathbb{R}}^{1}$,
and that 
\[
\alpha=\tau_{1}\circ\tau^{-1}|_{\mathbb{P}_{\mathbb{R}}^{2}\setminus L_{\infty}}:\mathbb{A}_{\mathbb{R}}^{2}=\mathbb{P}_{\mathbb{R}}^{2}\setminus L_{\infty}\dashrightarrow\mathbb{F}_{0}\setminus(E_{0,4}\cup E_{0,3})\simeq\mathbb{A}_{\mathbb{R}}^{2}
\]
is a birational diffeomorphism for which $\alpha\circ f:\mathbb{A}_{\mathbb{R}}^{1}\dashrightarrow\mathbb{A}_{\mathbb{R}}^{2}$
is a closed embedding of $\mathbb{A}_{\mathbb{R}}^{1}$
as a real fiber of the projection $\mathrm{pr}_{2}:\mathbb{A}_{\mathbb{R}}^{2}\rightarrow\mathbb{A}_{\mathbb{R}}^{1}$. 
\end{proof}

\subsubsection{A unique singular point}
\begin{lem} \label{lem:unique-sing}
Let $f:\mathbb{A}_{\mathbb{R}}^{1}\dashrightarrow\mathbb{A}_{\mathbb{R}}^{2}$
be a rational smooth embedding whose associated curve
$C\subset\mathbb{P}_{\mathbb{R}}^{2}$ is a quartic with a unique
singular point $p_{\infty}$. Then the following holds: 

a) If $p_{\infty}$ is an oscnode $A_{5}^{*}$ or a ramphoid
cusp $A_{6}$ such that $T_{p_{\infty}}C\neq L_{\infty}$ then $f$
is biddable. 

b) If $p_{\infty}$ is either triple point $D_{4}^{*}$ or $E_{6}$ or
a ramphoid cusp $A_{6}$ such that $T_{p_{\infty}}C=L_{\infty}$ then
$f$ is rectifiable.  
\end{lem}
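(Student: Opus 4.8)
The plan is to handle biddability and rectifiability by a single blow-up/blow-down construction carried out separately for each of the four singularity types, in the spirit of Lemmas \ref{lem:Rectif-3Sing-bis} and \ref{lem:Rectif-3Sing}. Recall from Lemma \ref{lem:SingTypes}(3) that $p_{\infty}$ is a real point of $C\cap L_{\infty}$, that $C\cap L_{\infty}=\{p_{\infty}\}$ precisely in the tangent cases $D_{4}^{*}$, $E_{6}$ and $A_{6}$ with $T_{p_{\infty}}C=L_{\infty}$ (where $C\cdot L_{\infty}=4$ is concentrated at $p_{\infty}$), while in the non-tangent cases $A_{5}^{*}$ and $A_{6}$ with $T_{p_{\infty}}C\neq L_{\infty}$ one has $C\cap L_{\infty}=\{p_{\infty},q,\overline{q}\}$ with $q,\overline{q}$ a non-real complex conjugate pair. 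First I would run the minimal log-resolution $\tau\colon V\to\mathbb{P}_{\mathbb{R}}^{2}$ of $(\mathbb{P}_{\mathbb{R}}^{2},C\cup L_{\infty})$ over $\mathbb{R}$, the chain of infinitely near points over $p_{\infty}$ being dictated by the multiplicity sequences of Table \ref{QuarticSing}; the proper transform $C'$ of $C$ is then a smooth rational curve and $B=\tau^{-1}(C\cup L_{\infty})_{\mathrm{red}}$ an SNC tree without imaginary loop.

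Next I would endow a suitable blown-down model with a $\mathbb{P}^{1}$-fibration $\overline{\pi}\colon V\to\mathbb{P}_{\mathbb{R}}^{1}$ induced by the pencil of lines through $p_{\infty}$. As $p_{\infty}$ has multiplicity $m\in\{2,3\}$ on the quartic, $C'$ is initially a $(4-m)$-section while $L_{\infty}$ and the resolution tree lie in fibers. Following the elementary-transformation technique of Lemma \ref{lem:Rectif-3Sing} — blowing up a conjugate pair of points on $C'$ and contracting the conjugate pair of fibers through them — I would lower this multisection degree to one and turn $C'$ into a fiber, while simultaneously contracting the superfluous non-real boundary components so as to reach a pair $(V,B)$ of the shape prescribed by Lemma \ref{lem:A1Fibmodel-completion}, in such a way that the induced birational map restricts to a birational diffeomorphism $\alpha\colon\mathbb{A}_{\mathbb{R}}^{2}=\mathbb{P}_{\mathbb{R}}^{2}\setminus L_{\infty}\dashrightarrow S=V\setminus B$. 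Writing $\pi=\overline{\pi}|_{S}$, it then suffices by Theorem \ref{thm:-A1Fib-Model} to check that the degenerate real fibers of $\pi$ are geometrically irreducible, of odd multiplicity, and isomorphic to $\mathbb{A}_{\mathbb{R}}^{1}$ with their reduced structure, in order to conclude that $S$ is an $\mathbb{A}^{1}$-fibered algebraic model of $\mathbb{R}^{2}$; since $C$ is by construction the support of a fiber of $\pi$, this proves that $f$ is biddable in all four cases and settles part a).

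For part b) I would read off the number of non-reduced real fibers of $\pi$ from the tangency data. In the tangent cases $D_{4}^{*}$, $E_{6}$ and $A_{6}$ with $T_{p_{\infty}}C=L_{\infty}$, the concentration of $C\cdot L_{\infty}=4$ at the single point $p_{\infty}$ leaves at most one non-reduced real fiber, necessarily the one supported on $C$; hypothesis a) or b) of Proposition \ref{thm:Main-rectif-criterion} then applies and yields that $f$ is rectifiable. By contrast, in the non-tangent cases $A_{5}^{*}$ and $A_{6}$ with $T_{p_{\infty}}C\neq L_{\infty}$, the secondary conjugate intersection $\{q,\overline{q}\}=(C\cap L_{\infty})\setminus\{p_{\infty}\}$ gives rise, after the transformations above, to a second non-reduced real fiber disjoint from the one carrying $C$. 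Neither hypothesis of Proposition \ref{thm:Main-rectif-criterion} is then satisfied, which is why the argument can only deliver biddability here, in agreement with the exceptional cases of Proposition \ref{prop:rectif-quartic}.

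The step I expect to be the main obstacle is the explicit choice of the elementary-transformation sequence: it must simultaneously reduce $C'$ to a single fiber, clear every non-real boundary curve so that what remains is a tree as in Lemma \ref{lem:A1Fibmodel-completion}, and preserve the correct odd multiplicities on the degenerate fibers — all while keeping $S(\mathbb{R})$ diffeomorphic to $\mathbb{R}^{2}$. The finest point, separating a) from b), is the exact count of non-reduced real fibers of the final fibration, which must be extracted from the position of $L_{\infty}$ relative to the branches of $C$ at $p_{\infty}$ rather than from the abstract singularity type alone; getting this count wrong is the likeliest source of error.
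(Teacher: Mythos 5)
There is a genuine gap, and it lies exactly where you predicted: in the choice and handling of the fibration. Your plan is to start from the pencil of lines through $p_{\infty}$, so that $C'$ is a $(4-m)$-section, and then ``lower this multisection degree to one and turn $C'$ into a fiber'' by elementary transformations (blow up a conjugate pair of points on $C'$, contract the conjugate pair of fibers through them). This cannot work: an elementary transformation commutes with the fibration, and the multisection degree of a curve is its intersection number with the fiber class, which is therefore invariant under such transformations. A $2$-section (cases $A_{5}^{*}$, $A_{6}$) stays a $2$-section, a section (cases $D_{4}^{*}$, $E_{6}$) stays a section, and neither ever becomes a fiber. The paper avoids this by choosing, in the two biddable cases, a pencil that contains $C$ as a member from the outset: the pencil of quartics generated by $C$ and $3T+L_{\infty}$, where $T$ is the tangent at $p_{\infty}$ (which meets $C$ with multiplicity $4$ there, by the multiplicity sequence $[2,2,2]$). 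Resolving this pencil and contracting two conjugate non-real conics yields an $\mathbb{A}^{1}$-fibration on a model $S$ having $C$ as a \emph{smooth reduced} real fiber and a \emph{unique} multiple fiber, namely $T\cap S\simeq\mathbb{A}_{\mathbb{R}}^{1}$ with multiplicity $3$; biddability then follows from Theorem \ref{thm:-A1Fib-Model}. Your diagnosis of why rectifiability is out of reach in these cases is also off: it is not that a ``second non-reduced real fiber'' appears, but that the single non-reduced fiber is supported on $T$ rather than on $C$, so neither hypothesis a) nor b) of Proposition \ref{thm:Main-rectif-criterion} holds.

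For part b) you also miss a decisive simplification, which makes your detour through Proposition \ref{thm:Main-rectif-criterion} unnecessary (and, given the flaw above, unavailable). In the cases $E_{6}$ and $A_{6}$ with $T_{p_{\infty}}C=L_{\infty}$ one has $C\cap L_{\infty}=\{p_{\infty}\}$, so $f$ is an everywhere-defined closed embedding of $\mathbb{R}$-schemes and rectifiability is immediate from the Abhyankar--Moh Theorem over $\mathbb{R}$. In the $D_{4}^{*}$ case the tangent cone at $p_{\infty}$ is $L_{\infty}\cup L\cup\overline{L}$ with $L,\overline{L}$ non-real conjugate; here a single elementary transformation (blow up the two non-real points where the proper transforms of $L,\overline{L}$ meet the exceptional divisor, contract $L,\overline{L}$) produces a birational diffeomorphism $\alpha$ of $\mathbb{A}_{\mathbb{R}}^{2}$ after which $\alpha\circ f$ is again an everywhere-defined closed embedding, and Abhyankar--Moh concludes. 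Note this is the one place where elementary transformations \emph{are} the right tool -- precisely because there $C'$ is a section and the goal is not to make it a fiber but to remove the non-real branches at infinity so that the affine curve misses $L_{\infty}$ entirely.
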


\begin{proof}
We consider each of the possible singularities $E_6$, $D_4^*$, $A_6$ and $A_5^*$ listed in Lemma \ref{lem:SingTypes} separately. We begin with the curves listed in case b). 

1) If $p_{\infty}$ is a multiplicity $3$ cusp $E_{6}$ or a ramphoid cusp $A_{6}$ with tangent $T_{p_{\infty}}C=L_{\infty}$, then $C\cap L_\infty=p_\infty$ and hence $f:\mathbb{A}_{\mathbb{R}}^{1}\dashrightarrow\mathbb{A}_{\mathbb{R}}^{2}$ is  an everywhere defined closed embedding, which is thus rectifiable by the Abhyankar-Moh Theorem over $\mathbb{R}$. 
\newline

2) If $p_{\infty}$ is a real triple point of type $D_{4}^{*}$ then
the tangent cone to $C$ at $p_{\infty}$ is the union of $L_{\infty}$
and a pair $L$ and $\overline{L}$ of non-real complex conjugate
tangents to $C$. Since every line in $\mathbb{P}_{\mathbb{R}}^{2}$
passing through $p_{\infty}$ intersects $C\setminus\{p_{\infty}\}$
transversally in at most one real point, the proper transform of $C$
in the blow-up $\tau_{1}:\mathbb{F}_{1}\rightarrow\mathbb{P}_{\mathbb{R}}^{2}$
of $p_{\infty}$ with exceptional divisor $E\simeq\mathbb{P}_{\mathbb{R}}^{1}$
is a section of the $\mathbb{P}^{1}$-bundle structure $\rho_{1}:\mathbb{F}_{1}\rightarrow\mathbb{P}_{\mathbb{R}}^{1}$.
It intersects $E$ transversally at the pair of non-real complex conjugate
intersection points $q$ and $\overline{q}$ of the proper transforms
of $L$ and $\overline{L}$ with $E$ respectively and at the real
intersection point of the proper transform of $L_{\infty}$ with $E$.
Let $\tau_{2}:\mathbb{F}_{1}\dashrightarrow\mathbb{F}_{3}$ be the
birational map defined over $\mathbb{R}$ consisting of the blow-up
of $q$ and $\overline{q}$ followed by the contraction of the proper
transform of $L$ and $\overline{L}$. The proper transforms of $L_{\infty}$
and $E$ are respectively a fiber and the negative section of the
$\mathbb{P}^{1}$-bundle structure $\rho_{3}:\mathbb{F}_{3}\rightarrow\mathbb{P}_{\mathbb{R}}^{1}$
while the proper transform of $C$ in $\mathbb{F}_{3}$ is a section
of $\rho_{3}$ intersecting $E$ transversally at the real point $E\cap L_{\infty}$.
The birational map $\tau_{2}\circ\tau_{1}^{-1}$ restricts to a birational
diffeomorphism $\alpha:\mathbb{A}_{\mathbb{R}}^{2}=\mathbb{P}_{\mathbb{R}}^{2}\setminus L_{\infty}\stackrel{\sim}{\dashrightarrow}\mathbb{F}_{3}\setminus(E\cup L_{\infty})\simeq\mathbb{A}_{\mathbb{R}}^{2}$
for which the composition $\alpha\circ f:\mathbb{A}_{\mathbb{R}}^{1}\dashrightarrow\mathbb{A}_{\mathbb{R}}^{2}$
is an everywhere defined closed embedding. By the Abhyankar-Moh Theorem
over $\mathbb{R}$, $\alpha\circ f$ is rectifiable, and so $f$
is rectifiable. 
\newline

3) If $p_{\infty}$ is a ramphoid cusp $A_{6}$ such that $T_{p_{\infty}}C\neq L_{\infty}$
then $C\cap(L_{\infty}\setminus\{p_{\infty}\})$ consists of a pair
$\{q,\overline{q}\}$ of non-real complex conjugate points while the
tangent $T$ to $C$ at $p_{\infty}$ intersects $C$ with multiplicity
$4$ at $p_{\infty}$. Let $\overline{\rho}:\mathbb{P}_{\mathbb{R}}^{2}\dashrightarrow\mathbb{P}_{\mathbb{R}}^{1}$
be the rational map defined by the pencil $\mathcal{M}\subset|\mathcal{O}_{\mathbb{P}_{\mathbb{R}}^{2}}(4)|$
generated by $C$ and $3T+L_{\infty}$. A minimal resolution $\tau:V_{1}\rightarrow\mathbb{P}_{\mathbb{R}}^{2}$
of the indeterminacy of $\overline{\rho}$ is obtained by blowing-up
$q$ and $\overline{q}$ with respective non-real complex conjugate
exceptional divisor $F$ and $\overline{F}$, the point $p_{\infty}$
with exceptional divisor $E_{1}\simeq\mathbb{P}_{\mathbb{R}}^{1}$
and four times successively the intersection point $p_{\infty,i}$
of the proper transform of $C$ and $E_{i}$, with exceptional divisor
$E_{i+1}\simeq\mathbb{P}_{\mathbb{R}}^{1}$. The reduced total transform
$B_{1}$ of $L_{\infty}$ is a tree of smooth geometrically rational
curves. The proper transform of $C$ in $V$ is a smooth rational
curve with self-intersection $0$, and so $\overline{\pi}_{1}=\overline{\rho}\circ\tau:V_{1}\rightarrow\mathbb{P}_{\mathbb{R}}^{1}$
is a $\mathbb{P}^{1}$-fibration, having the proper transform of $C$
as a smooth real fiber, and the exceptional divisors $F$, $\overline{F}$
and $E_{5}$ as sections (See left hand side of Figure \ref{fig:onesing}). 

The fiber of $\overline{\pi}_{1}$ over the real point $\overline{\pi}_{1}(E_{4})$
consists of the union of $E_{4}$ and the proper transforms of the
non-real complex conjugate smooth conics $Q$ and $\overline{Q}$
passing respectively through $p_{\infty},p_{\infty,1},p_{\infty,2},p_{\infty,3},q$
and $p_{\infty},p_{\infty,1},p_{\infty,2},p_{\infty,3},\overline{q}$.
These are disjoint $(-1)$-curves in $V_{1,\mathbb{C}}$ which can
thus be contracted by a birational morphism $\sigma:V_{1}\rightarrow V$
defined over $\mathbb{R}$. We let $B=L_{\infty}\cup E_{1}\cup\cdots\cup E_{5}$
be the proper transform of $B_{1,\mathbb{R}}$. The proper transform
of $E_{4}$ is then a full fiber of the induced $\mathbb{P}^{1}$-fibration
$\overline{\pi}=\overline{\pi}_{1}\circ\sigma^{-1}:V\rightarrow\mathbb{P}_{\mathbb{R}}^{1}$
and the latter restricts on $S=V\setminus B$ to an $\mathbb{A}^{1}$-fibration
$\pi:S\rightarrow\mathbb{A}_{\mathbb{R}}^{1}$ with a unique singular
fiber equal to $T_{p_{\infty}}\cap S\simeq\mathbb{A}_{\mathbb{R}}^{1}$
with multiplicity $3$. By Theorem \ref{thm:-A1Fib-Model}, $S$ is an 
algebraic model of $\mathbb{R}^{2}$ and by construction, $\alpha=\sigma\circ\tau^{-1}:\mathbb{P}_{\mathbb{R}}^{2}\dashrightarrow V$
induces a birational diffeomorphism $\mathbb{P}_{\mathbb{R}}^{2}\setminus L_{\infty}\dashrightarrow S$
that maps $C$ to an irreducible fiber of $\overline{\pi}$. This
shows that $f:\mathbb{A}_{\mathbb{R}}^{1}\dashrightarrow\mathbb{A}_{\mathbb{R}}^{2}$
is biddable. Note that since $\pi$ has $T\cap S\simeq \mathbb{A}^1_{\mathbb{R}}$
as a non-reduced fiber, none of the sufficient conditions for rectifiablity given Proposition \ref{thm:Main-rectif-criterion} is satisfied.   
\newline

\begin{figure}
\input{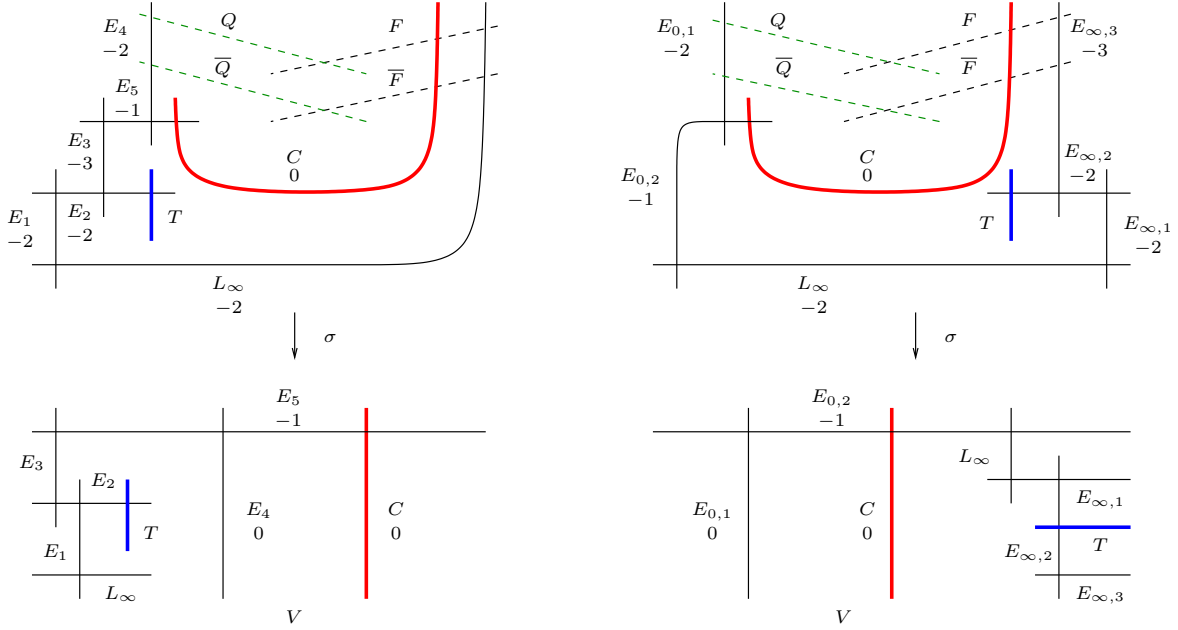} 
\caption{One singular point: case $A_6$ on the left and $A_5^*$ on the right.}  
\label{fig:onesing}
\end{figure}

4) If $p_{\infty}$ is an oscnode whose inverse image in the normalization of $C$ consists of a pair $\{q,\overline{q}\}$ of non-real
complex conjugate points and whose tangent is different from $L_\infty$. It follows from Lemma \ref{lem:RatEmbed-from-projcurve}
that $C\cap(L_{\infty}\setminus\{p_{\infty}\})$ consists of a unique
other smooth real point $p_{0}$ at which $C$ and $L_{\infty}$ intersect
with multiplicity $2$. As in the previous case, the tangent $T$
to $C$ at $p_{\infty}$ intersects $C$ with multiplicity $4$ at
$p_{\infty}$, and we consider the rational map $\overline{\rho}:\mathbb{P}_{\mathbb{R}}^{2}\dashrightarrow\mathbb{P}_{\mathbb{R}}^{1}$
defined by the pencil $\mathcal{M}\subset|\mathcal{O}_{\mathbb{P}_{\mathbb{R}}^{2}}(4)|$
generated by $C$ and $3T+L_{\infty}$. A minimal resolution $\tau:V_{1}\rightarrow\mathbb{P}_{\mathbb{R}}^{2}$
of the indeterminacy of $\overline{\rho}$ for which $B$ has no imaginary loop is obtained by blowing-up
on the one hand $p_{0}$ with exceptional divisors $E_{0,1}$ and
then the intersection point $p_{0,1}$ of the proper transform of $C$ and
$E_{0,1}$ with exceptional divisor $E_{0,2}$, and on the other hand
blowing-up $p_{\infty}$ with exceptional divisor $E_{\infty,1}$, then two
times successively the intersection point $p_{\infty,i}$ of the proper transform 
of $C$ and $E_{\infty,i}$ with exceptional divisor $E_{\infty,i+1}$
and finally blowing-up the pair $\{q,\overline{q}\}$ of non-real
complex conjugate points of intersection of $E_{\infty,3}$ with the
proper transform of $C$, with respective exceptional divisors $F$
and $\overline{F}$ (See right hand side of Figure \ref{fig:onesing}). The reduced total transform
$B_{1}$ of $L_{\infty}$ is a tree of smooth geometrically rational
curves. The proper transform of $C$ in $V$ is a smooth rational
curve with self-intersection $0$. So $\overline{\pi}_{1}=\overline{\rho}\circ\tau_{1}:V_{1}\rightarrow\mathbb{P}_{\mathbb{R}}^{1}$
is a $\mathbb{P}^{1}$-fibration having the proper transform of $C$
as a smooth real fiber and the curves $E_{0,2}$, $F$ and $\overline{F}$
as disjoint sections. The fiber of $\overline{\pi}_{1}$ over the
real point $\overline{\pi}_{1}(E_{0,1})$ consists of the union of
$E_{0,1}$ and the proper transforms of the non-real complex conjugate
smooth conics $Q$ and $\overline{Q}$ passing respectively through
$p_{0},p_{\infty},p_{\infty,1},p_{\infty,2},q$ and $p_{0},p_{\infty},p_{\infty,1},p_{\infty,2},\overline{q}$.
These are disjoint $(-1)$-curves in $V_{1,\mathbb{C}}$ which can
thus be contracted by a birational morphism $\sigma:V_{1}\rightarrow V$
defined over $\mathbb{R}$. We let $B=L_{\infty}\cup E_{\infty,1}\cup E_{\infty,2}\cup E_{\infty,3}\cup E_{0,1}\cup E_{0,2}$
be the proper transform of $B_{1,\mathbb{R}}$. The proper transform
of $E_{0,1}$ is then a full fiber of the induced $\mathbb{P}^{1}$-fibration
$\overline{\pi}=\overline{\pi}_{1}\circ\sigma^{-1}:V\rightarrow\mathbb{P}_{\mathbb{R}}^{1}$,
and the latter restricts on $S=V\setminus B$ to an $\mathbb{A}^{1}$-fibration
$\pi:S\rightarrow\mathbb{A}_{\mathbb{R}}^{1}$ with a unique singular
fiber equal to $T\cap S\simeq\mathbb{A}_{\mathbb{R}}^{1}$
with multiplicity $3$. We conclude as in the previous case that $f:\mathbb{A}{}_{\mathbb{R}}^{1}\dashrightarrow\mathbb{A}_{\mathbb{R}}^{2}$
is biddable but that none of the sufficient conditions for rectifiablity given Proposition \ref{thm:Main-rectif-criterion} is satisfied.
\end{proof}

\begin{rem}\label{rem:TwoQuarticModels} Let $f_i:\mathbb{A}_{\mathbb{R}}^{1}\dashrightarrow\mathbb{A}_{\mathbb{R}}^{2}$, $i=1,2$,
be  rational smooth embeddings whose associated curve $C\subset\mathbb{P}_{\mathbb{R}}^{2}$ is a quartic whose unique
singular point $p_{\infty}$  is an oscnode $A_{5}^{*}$ or a ramphoid cusp $A_{6}$ such that $T_{p_{\infty}}C\neq L_{\infty}$ respectively. 
In each case, the $\mathbb{A}^1$-fibered model $\pi_i:S_i\rightarrow \mathbb{A}^1_{\mathbb{R}}$ of $\mathbb{R}^2$ for which $\alpha_i \circ f_i: \mathbb{A}^1_{\mathbb{R}}\rightarrow S_i$ is a closed embedding as a smooth fiber of $\pi_i$ constructed in the proof of Lemma \ref{lem:unique-sing} has a unique multiple fiber, of multiplicity $3$, say $\pi_i^{-1}(0)$. On the other hand, one sees on Figure \ref{fig:onesing} that the $\mathbb{P}^1$-fibrations $\overline{\pi}_i:V_i\rightarrow \mathbb{P}^1_{\mathbb{R}}$ extending $\pi_i$ have non-isomorphic real fibers  $\overline{\pi}_1^{-1}(0)$ and $\overline{\pi}_2^{-1}(0)$: in the $A_{5}^{*}$ case, the irreducible component of $\overline{\pi}_1^{-1}(0)$ intersecting the section of $\overline{\pi}_1$ contained in $B_1$ has self-intersection $-3$, while the ireducible component of $\overline{\pi}_2^{-1}(0)$ intersecting the section of $\overline{\pi}_2$ contained in $B_2$ has self-intersection $-2$ in the $A_{6}$ case. This implies that there cannot exist any birational map $\beta:V_1\dashrightarrow V_2$ which restricts to a birational diffeomorphism between $S_1$ and $S_2$ and such that $\overline{\pi}_2\circ \beta=\overline{\pi}_1$. 
\end{rem}

\bibliographystyle{amsplain} 

\end{document}